\numberwithin{equation}{section}
\newtheorem{thm}[equation]{Theorem}
\newtheorem{cor}[equation]{Corollary}
\newtheorem{lemma}[equation]{Lemma}
\newtheorem{prop}[equation]{Proposition}
\theoremstyle{definition}
\newtheorem{definition}[equation]{Definition}
\newtheorem{rem}[equation]{Remark}
\newtheorem{rems}[equation]{Remarks}
\newtheorem{examp}[equation]{Example} 
\newcommand{\CC}{\mathbb{C}}
\newcommand{\ZZ}{\mathbb{Z}}  
\newcommand{\A}{\mathbf{A}} 
\newcommand{\B}{\mathbf{B}}
\newcommand{\C}{\mathbf{C}}  
\newcommand{\J}{\mathbf{J}} 
\newcommand{\Motz}{\mathbf{M}}
\newcommand{\Pb}{\mathbf{P}\hspace{-1pt}}
\newcommand{\PR}{\mathbf{PR}}
\newcommand{\planarP}{\mathbf{PP}\hspace{-1pt}}
\newcommand{\R}{\mathbf{R}}
\newcommand{\RB}{\mathbf{RB}}
\newcommand{\Sb}{\mathbf{S}}  
\newcommand{\TL}{\mathbf{TL}}
\newcommand{\W}{\mathbf{W}}
\newcommand{\ds}{\mathsf{d}}
\newcommand{\e}{\mathsf{e}}  
\newcommand{\m}{\mathsf{m}}
\newcommand{\n}{\mathsf{n}}
\newcommand{\us}{\mathsf{u}}
\newcommand{\vs}{\mathsf{v}}
\newcommand{\BB}{\mathsf{B}}
\newcommand{\F}{\mathsf{F}}
\newcommand{\N}{\mathsf{N}}
\newcommand{\T}{\mathsf{T}} 
\newcommand{\V}{\mathsf{V}}
\newcommand{\calA}{\mathcal{A}}  
\newcommand{\calF}{\mathcal{F}}  
\newcommand{\calP}{\mathcal{P}}
\newcommand{\calW}{\mathcal{W}}
\DeclareRobustCommand{\stirling}{\genfrac\{\}{0pt}{}}
\newcommand{\Wm}{\calW^{m}_{\!\!\calA_k}}
\newcommand{\WWm}{\W^{m}_{\!\!\calA_k}}
\newcommand{\ot}{\otimes}
\newcommand{\End}{\mathsf{End}}
\newcommand{\YT}{\mathcal{YT}}
\newcommand{\SYT}{\mathcal{SYT}}
\newcommand{\SPT}{\mathcal{SPT}}
\newcommand{\SPTb}{\overline{\mathcal{SPT}}}
\newcommand{\SSPT}{\mathcal{SSPT}}
\newcommand{\tp}{\mathsf{top}}
\newcommand{\bt}{\mathsf{bot}} 
\newcommand{\pn}{\mathsf{rank}} 
\newcommand{\param}{{n}}
\title[Set-partition tableaux and representations of diagram algebras]{Set-partition tableaux and representations \\ of diagram algebras}
\author[T. Halverson]{Tom Halverson}
\address{Department of Mathematics\\ Statistics\\ and Computer Science \newline\indent
Macalester College \newline\indent
Saint Paul\\ MN 55105 (USA) }
\email{halverson@macalester.edu}
\author[T. N. Jacobson]{Theodore N. Jacobson}
\address{School of Physics and Astronomy \newline\indent
University of Minnesota \newline\indent
Minneapolis\\ MN 55455 (USA)}
\email{jaco2585@umn.edu}
\thanks{The authors gratefully acknowledge support from Simons Foundation grant 283311.}
\begin{document}

\date{\today}
\maketitle


\begin{abstract}
The partition algebra is an associative algebra with a basis of set-partition diagrams and multiplication given by diagram concatenation. It contains as subalgebras a large class of diagram algebras including the Brauer, planar partition, rook monoid, rook-Brauer, Temperley-Lieb,  Motzkin, planar rook monoid, and symmetric group algebras. 
We construct the irreducible modules of these algebras in three isomorphic ways: 
as the span of diagrams in a quotient of the left regular representation; 
as the span of  symmetric diagrams on which the algebra acts by conjugation twisted with an irreducible  symmetric group representation; 
and on a basis indexed by set-partition tableaux such that diagrams in the algebra act combinatorially on tableaux. The second representation is analogous to the Gelfand model and the third is a generalization of Young's natural representation of the symmetric group on standard tableaux. The methods of this paper work uniformly for the partition algebra and its diagram subalgebras. As an application, we express the characters of each of these algebras as nonnegative integer combinations of symmetric group characters whose coefficients count fixed points under conjugation. 
\end{abstract}

\vspace{0.5cm} 

\noindent
{\small \emph{Mathematics Subject Classification} (2010): MSC 05E10, MSC 05E18, MSC 20C15 }

\noindent
{\small \emph{Keywords}: Set partitions, tableaux, partition algebra, symmetric group, Brauer algebra, Temperley-Lieb algebra, Motzkin algebra, Rook monoid }


\section{Introduction}
\label{sec:intro}

The partition algebra $\Pb_k(\param)$ for $k \in \ZZ_{\geq 0}$ is a unital, associative algebra over $\CC$ (or any field of characteristic 0) and is semisimple for all $\param \in \CC \setminus \{0,1, \ldots, 2k-2\}$. It has a basis of set-partition diagrams and multiplication given by diagram concatenation.  This algebra arose in the work of P.P. Martin \cite{Ma0,Ma1} and V. Jones \cite{Jo} in the study of the Potts model, a $k$-site, $n$-state lattice model in statistical mechanics. For $k, n\in \ZZ_{\geq 1}$ the partition algebra $\Pb_k(n)$ and the symmetric group $\Sb_n$ are in Schur-Weyl duality on the $k$-fold tensor product $\V_n^{\otimes k}$ of the $n$-dimensional permutation module $\V_n$ of the symmetric group $\Sb_n$, and when $n \geq 2k$, $\Pb_k(n)$ is isomorphic to the centralizer algebra of $\Sb_n$ on $\V_n^{\otimes k}$. This allows information to flow back and forth between $\Pb_k(n)$ and $\Sb_n$.

The partition algebra $\Pb_k(n)$ contains as subalgebras a large class of diagram algebras including the Brauer, planar partition, rook monoid,  rook-Brauer,  Temperley-Lieb,  Motzkin,  planar rook monoid, and symmetric group algebras. Each of these subalgebras arises as the span of restricted types of set-partition diagrams (see Section~\ref{subsec:subalgebras}).  If $\A_k$ is the partition algebra or one of its diagram subalgebras, then the irreducible $\A_k$-modules can be indexed by a subset $\Lambda^{\A_k}_n \subseteq \{ \lambda \vdash n\}$ of the integer partitions of $n$. In this paper we give three explicit  constructions of the irreducible modules $\A_k^\lambda$ for $\lambda \in \Lambda^{\A_k}_n$. The first is as the span of diagrams inside a quotient of the left regular representation of $\A_k^\lambda$. The second is a combinatorial realization of the first. It is given by conjugation on a basis of symmetric $m$-diagrams (Definition~\ref{def:symmetricmdiagram}) that is twisted by a symmetric group representation. This method is analogous to the Gelfand models for diagram algebras found in \cite{HReeks} and \cite{KM-model}.  A nice feature of the construction here is that we isolate each irreducible module, rather than constructing a (multiplicity-free) sum of irreducible modules.

The third method of constructing $\A_k^\lambda$ is on a basis of set-partition tableaux. In \cite{BH-invariant1,BH-invariant2} and \cite{OZ}, it is shown that the dimension of the irreducible partition algebra module $\A^{\,\lambda}_k$ equals the number of standard set-partition tableaux of shape $\lambda$. Thus,  there should be a representation of these modules on a basis indexed by set partition tableaux, and such a construction is the main result of this paper. 
In Section \ref{sec:tableaux}, we give a combinatorial action of the diagrams in $\A_k(n)$ on these set partition tableaux and prove that it is isomorphic to the irreducible module $\A^{\,\lambda}_k$. This representation is a generalization of Young's natural representation of the symmetric group on a basis of standard Young tableaux. In fact, if $\lambda$ has $k$ boxes below the first row, when restricted to the symmetric group algebra $\CC \Sb_k \subseteq \Pb_k(n)$ we {exactly} recover Young's representation.

A surprising feature of the methods in this paper is that, by restriction, they work uniformly for the partition algebra and all of the diagram subalgebras listed above. Thus we obtain a complete set of analogs of Young's natural representation for these algebras. In the case of the non-planar algebras -- partition, Brauer, rook monoid, rook-Brauer -- we obtain new constructions of the irreducible modules on symmetric diagrams and on set-partition tableaux. In the case of the planar algebras -- planar partition, Temperley-Lieb, Motzkin, and planar rook monoid -- our methods specialize to known constructions. 

In Section~\ref{sec:characters}, we use our explicit construction of the irreducible modules on symmetric diagrams to write the irreducible characters of each $\A_k$ into a nonnegative integer sum of characters of the symmetric groups $\Sb_m$, for $0 \leq m \leq k$. We prove that if $\lambda  = [\lambda_1, \lambda_2, \ldots, \lambda_\ell] \vdash n$ with $\lambda^\ast = [\lambda_2, \lambda_3, \cdots, \lambda_\ell] \vdash m$, then the value of the irreducible $\A_k$ character on a diagram $\gamma_\kappa$ of cycle type $\kappa\vdash k$ (see~\eqref{eqn:cyclediagram}) is given by
\begin{equation}
\chi^\lambda_{\A_k}(\gamma_\kappa) = \sum_{\mu \vdash m} \F_{\!\A_k}^{\mu,\kappa}\,\chi^{\lambda^\ast}_{\Sb_m}(\gamma_\mu), \label{eqn:introchar}
\end{equation}
where $\F_{\!\A_k}^{\mu,\kappa} \in \ZZ_{\geq 0}$ and $\chi^{\lambda^\ast}_{\Sb_m}(\gamma_\mu)$ is the symmetric group character indexed by $\lambda^\ast$ on the conjugacy class of cycle type $\mu\vdash m$.  By counting fixed points under conjugation, we obtain a closed formula for the coefficients $\F_{\!\A_k}^{\mu,\kappa}$. For example, we prove in Proposition~\ref{prop:coeff} that for the partition algebra~$\Pb_k(n)$,
\begin{equation}\label{intro:coefficient}
\F_{\Pb_k(n)}^{\mu,\kappa} = \sum_{\nu | \kappa} \prod_{i} \sum_{t} \stirling{\m_i(\nu)}{t}\binom{t}{\m_i(\mu)}i^{\m_i(\nu)-t},
\end{equation}
where $\nu | \kappa$ means that $\nu$ is a divisor of $\kappa$ (see Definition~\ref{def:divisor}) and $\m_i(\nu)$ denotes the number of parts of $\nu$ equal to $i$.
In this formula $\stirling{a}{b}$ is the Stirling number of the second kind and $\binom{a}{b}$ is the binomial coefficient. The coefficient in~\eqref{intro:coefficient} specializes to the diagram subalgebras giving new character formulas for the partition, Brauer, and rook-Brauer algebras and known formulas for the rook monoid, Temperley-Lieb, Motzkin, and planar rook monoid algebras.

For further background on partition algebras see \cite{Jo}, \cite{Ma1,Ma2,Ma3}, \cite{ME}, \cite{MS,MS2}, \cite{DW}, \cite{HR}, \cite{BH-invariant2}.  Representing the irreducible modules on a basis of set-partition tableaux is new for all of these algebras. The construction on symmetric diagrams for the partition algebra is closely related to the work in \cite{MS} and \cite{DW} and for the Brauer algebra to the work in \cite{HW}. The construction of the irreducible modules of the planar algebras on symmetric diagrams is identical to the construction in the Gelfand models of \cite{HReeks} and \cite{KM-model} and is isomorphic to known representations of the Temperley-Lieb  \cite{Westbury}, Motzkin  \cite{BH-motzkin}, and planar partition \cite{FHH} algebras.
The representations constructed in this paper are different from the seminormal representations constructed for the partition \cite{Enyang}, Brauer \cite{Nazarov}, rook-Brauer \cite{dH}, rook monoid \cite{Ha-rook}, and Temperley-Lieb \cite{HMR} algebras.

\medskip\noindent
{\bf Acknowledgements} {We thank the anonymous referee who gave important suggestions that substantially improved this paper.}


\section{Partition Algebras}
\label{sec:intro}

\subsection{Set-partition diagrams}

We let $\Pi_{2k}$ denote the set of set partitions of~$\{1, \ldots, k,1',\ldots, k'\}$ and refer to the subsets of a set partition as \emph{blocks}. For example, 
\begin{equation} \label{eqn:setpartition}
\left\{1',2 \ \middle|\ 2',3' \ \middle|\ 4',1,3 \ \middle|\ 5',7' \ \middle|\ 6', 4, 7, 8 \ \middle|\ 8',6 \ \middle|\ 5 \right\}
\end{equation}
is a set partition in $\Pi_{16}$ with 7 blocks. The number of set partitions in $\Pi_{2k}$ with $t$ blocks is given by the Stirling number of the second kind $\stirling{2k}{t}$, and thus $\Pi_{2k}$ has order equal to the Bell number~$\BB(2k) = \sum_t  \stirling{2k}{t}$. 

A diagram $d$ of a set partition $\pi \in \Pi_{2k}$ consists of two rows of $k$ vertices labeled $1', \dots, k'$ on the bottom row and $1, \ldots, k$ on the top row. Edges are drawn such that the connected components of $d$ equal $\pi$. For example, the set partition in~\eqref{eqn:setpartition} is represented by
\begin{equation*}
\begin{array}{c}
\scalebox{0.75}{\begin{tikzpicture}[scale=.55,line width=1.35pt] 
\foreach \i in {1,...,8} 
{ \path (\i,2) coordinate (T\i); \path (\i,0) coordinate (B\i); } 
\filldraw[fill=gray!25,draw=gray!25,line width=4pt]  (T1) -- (T8) -- (B8) -- (B1) -- (T1);
\draw (T3) -- (B4);
\draw (T2) -- (B1);
\draw (T6) -- (B8);
\draw (T4)  .. controls +(.1,-1) and +(-.1,.7) .. (B6);
\draw (T1) .. controls +(.1,-.8) and +(-.1,-.8) .. (T3) ;
\draw (T4) .. controls +(.1,-.8) and +(-.1,-1) .. (T7) ;
\draw (T7) .. controls +(.1,-.6) and +(-.1,-.6) .. (T8) ;
\draw (B2) .. controls +(.1,.6) and +(-.1,.6) .. (B3) ;
{\draw (B5) .. controls +(.1,.8) and +(-.1,.8) .. (B7) ;}
\foreach \i in {1,...,8}
{\draw  (B\i)  node[below=0.05cm]{${\i'}$}; \draw  (T\i)  node[above=0.05cm]{${\i}$};}
\foreach \i in {1,...,8} 
{ \fill (T\i) circle (4.5pt); \fill (B\i) circle (4.5pt); } 
\end{tikzpicture}} \end{array}.
\end{equation*}
The way the edges are drawn is immaterial; what matters is that the connected components of the diagram correspond to the blocks of the set partition. Thus,  $d$  represents the equivalence class of all diagrams with connected components equal to the blocks of $\pi$. We define
\begin{equation}
\calP_{k} = \{ d \mid \text{$d$ is the diagram of a set partition in $\Pi_{2k}$} \}.
\end{equation}
Concatenation $d_1 \circ d_2$ of two diagrams $d_1$, $d_2$ is accomplished by placing $d_1$ above $d_2$,
identifying the vertices in the bottom row of $d_1$ with those in the top row of $d_2$,
concatenating  the edges,  and deleting all connected components that lie entirely in the middle row of the joined diagrams. For example, 
\begin{equation}\label{eq:diagrammultexample}
\begin{array}{r}
d_1 = 
\begin{array}{c}
\scalebox{0.75}{\begin{tikzpicture}[scale=.55,line width=1.35pt] 
\foreach \i in {1,...,12} 
{ \path (\i,2) coordinate (T\i); \path (\i,0) coordinate (B\i); } 
\filldraw[fill=gray!25,draw=gray!25,line width=4pt]  (T1) -- (T12) -- (B12) -- (B1) -- (T1);
\draw (T2) .. controls +(.1,-.6) and +(-.1,-.6) .. (T3) ;
\draw (T6) .. controls +(.1,-.6) and +(-.1,-.6) .. (T7) ;
\draw (T3) .. controls +(.1,-.8) and +(-.1,-.8) .. (T5) ;
\draw (T9) .. controls +(.1,-.8) and +(-.1,-.8) .. (T11) ;
\draw (T10) .. controls +(.1,-.8) and +(-.1,-.8) .. (T12) ;
\draw (B3) .. controls +(.1,.8) and +(-.1,.8) .. (B5) ;
\draw (B8) .. controls +(.1,.8) and +(-.1,.8) .. (B10) ;
\draw (B7) .. controls +(.1,1.3) and +(-.1,1.3) .. (B12) ;
\draw (B1) -- (T4);
\draw (B2) -- (T1);
\draw (B6) .. controls +(.1,1) and +(-.1,-.8) .. (T9) ;
\draw (B9) -- (T8);
\draw (B11) .. controls +(.1,.6) and +(-.1,-.6) .. (T12) ;
\foreach \i in {1,...,12}  { \fill (T\i) circle (4.5pt); \fill (B\i) circle (4.5pt); } 
\end{tikzpicture}}\end{array} \\
d_2 =
\begin{array}{c}
\scalebox{0.75}{\begin{tikzpicture}[scale=.55,line width=1.35pt] 
\foreach \i in {1,...,12} 
{ \path (\i,2) coordinate (T\i); \path (\i,0) coordinate (B\i); } 
\filldraw[fill=gray!25,draw=gray!25,line width=4pt]  (T1) -- (T12) -- (B12) -- (B1) -- (T1);
\draw (T1) .. controls +(.1,-.6) and +(-.1,-.6) .. (T2) ;
\draw (T7) .. controls +(.1,-.6) and +(-.1,-.6) .. (T8) ;
\draw (T10) .. controls +(.1,-.8) and +(-.1,-.8) .. (T12) ;
\draw (T3) .. controls +(.1,-1) and +(-.1,-1) .. (T6) ;
\draw (B3) .. controls +(.1,.6) and +(-.1,.6) .. (B4) ;
\draw (B6) .. controls +(.1,.6) and +(-.1,.6) .. (B7) ;
\draw (B10) .. controls +(.1,.6) and +(-.1,.6) .. (B11) ;
\draw (B1) .. controls +(.1,.8) and +(-.1,.8) .. (B3) ;
\draw (B9) .. controls +(.1,1) and +(-.1,1) .. (B12) ;
\draw (B2) -- (T2);
\draw (B6) -- (T5);
\draw (B8) .. controls +(.1,1) and +(-.1,-.8) .. (T11) ;
\draw (B10) .. controls +(-.1,.6) and +(.1,-.6) .. (T9) ;
\foreach \i in {1,...,12}  { \fill (T\i) circle (4.5pt); \fill (B\i) circle (4.5pt); } 
\end{tikzpicture}}\end{array} \end{array} 
= 
\begin{array}{c}  
\scalebox{0.75}{\begin{tikzpicture}[scale=.55,line width=1.35pt] 
\foreach \i in {1,...,12} 
{ \path (\i,2) coordinate (T\i); \path (\i,0) coordinate (B\i); } 
\filldraw[fill=gray!25,draw=gray!25,line width=4pt]  (T1) -- (T12) -- (B12) -- (B1) -- (T1);
\draw (T2) .. controls +(.1,-.6) and +(-.1,-.6) .. (T3) ;
\draw (T6) .. controls +(.1,-.6) and +(-.1,-.6) .. (T7) ;
\draw (T3) .. controls +(.1,-.8) and +(-.1,-.8) .. (T5) ;
\draw (T9) .. controls +(.1,-.8) and +(-.1,-.8) .. (T11) ;
\draw (T10) .. controls +(.1,-.8) and +(-.1,-.8) .. (T12) ;
\draw (B3) .. controls +(.1,.6) and +(-.1,.6) .. (B4) ;
\draw (B6) .. controls +(.1,.6) and +(-.1,.6) .. (B7) ;
\draw (B10) .. controls +(.1,.6) and +(-.1,.6) .. (B11) ;
\draw (B1) .. controls +(.1,.8) and +(-.1,.8) .. (B3) ;
\draw (B9) .. controls +(.1,1) and +(-.1,1) .. (B12) ;
\draw (T1) -- (B2) -- (T4);
\draw (B7) .. controls +(.1,.6) and +(-.1,-.6) .. (T9) ;
\draw (B8) .. controls +(.1,.6) and +(-.1,-.6) .. (T10) ;
\draw (B10) .. controls +(-.1,.6) and +(.1,-.6) .. (T8) ;
\foreach \i in {1,...,12} { \fill (T\i) circle (4.5pt); \fill (B\i) circle (4.5pt); } 
\end{tikzpicture} }
 \end{array} =  d_1 \circ d_2.  
 \end{equation} 
It is easy to confirm that concatenation depends only on the underlying set partitions and is independent of the diagrams chosen to represent them. Concatenation makes $\calP_k$ an associative monoid with identity element ${\bf 1}_k   =  
\begin{array}{c}
\scalebox{0.5}{
\begin{tikzpicture}[scale=.55,line width=1.35pt] 
\foreach \i in {1,...,6} 
{ \path (\i,1.5) coordinate (T\i); \path (\i,0) coordinate (B\i); } 
\filldraw[fill=gray!25,draw=gray!25,line width=4pt]  (T1) -- (T6) -- (B6) -- (B1) -- (T1);
\draw (T1) -- (B1);
\draw (T2) -- (B2);
\draw (T3) -- (B3);
\draw (T5) -- (B5);
\draw (T6) -- (B6);
\foreach \i in {1,2,3,5,6} { \fill (T\i) circle (4.5pt); \fill (B\i) circle (4.5pt); } 
\draw (T4) node  {$\qquad \cdots \qquad $ }; \draw (B4) node  { $\qquad \cdots\qquad $ }; 
\end{tikzpicture} } \end{array}
$
corresponding to the set partition $\left\{ 1,1' \ \middle|\ \cdots \ \middle|\ k,k'\right\}$.

Let $\Pb_0(n) = \CC$. For  $k \in \ZZ_{\geq 1}$ and $\param \in \CC$,  the \emph{partition algebra} $\Pb_k(\param)$ is the associative algebra over $\CC$ with basis~$\calP_k$,
\begin{equation}
\Pb_k(n) := \CC \calP_k = \CC\text{-span}\{d \mid \text{$d \in \calP_k$}\},
\end{equation}
such that multiplication in $\Pb_k(n)$ is defined on basis diagrams $d_1, d_2 \in \calP_k$ as
\begin{equation}
d_1 d_2 = \param^{\ell(d_1,d_2)}\, d_1 \circ d_2,
\end{equation}
where $\ell(d_1,d_2)$ is the number of connected components that were deleted from the middle row in the concatenation $d_1 \circ d_2$. For example, the product of the two diagrams in 
\eqref{eq:diagrammultexample} is $d_1 d_2 = \param^2 d_1 \circ d_2$. Since the basis of $\Pb_k(n)$ corresponds to set partitions in $\Pi_{2k}$ we have $\dim \Pb_{k}(n) = |\calP_k| = \BB(2k)$. 

The partition algebra is semisimple for all $\param \in \CC$ such that $\param \not \in \{0,1, \ldots, 2k-2\}$ (see \cite{MS2}, \cite[Thm.~3.27]{HR}), and the partition algebras $\Pb_k(n)$ are isomorphic to one another for all choices of the parameter $n$ such that $\Pb_k(n)$ is semisimple.   For this reason, we will assume 
that $n \in \ZZ$ such that $n \geq 2k$ so that we can take advantage of the Schur-Weyl duality between $\Pb_k(n)$ and $\Sb_n$ (see Section~\ref{subsec:schur-weyl}).

\subsection{Generators and relations}

For $k \in \ZZ_{\geq 1}$,  the partition algebra $\Pb_k(n)$  has a presentation by the generators
\begin{equation}
\label{s-gen}
\begin{array}{ccc}
\mathfrak{s}_i =  \!
\begin{array}{c}\scalebox{.75}{\begin{tikzpicture}[scale=.55,line width=1.35pt] 
\foreach \i in {1,...,8} 
{ \path (\i,2) coordinate (T\i); \path (\i,0) coordinate (B\i); } 
\filldraw[fill=gray!25,draw=gray!25,line width=4pt]  (T1) -- (T8) -- (B8) -- (B1) -- (T1);
\draw (T1) -- (B1);
\draw (T3) -- (B3);
\draw (T4) -- (B5);
\draw (T5) -- (B4);
\draw (T6) -- (B6);
\draw (T8) -- (B8);
\foreach \i in {1,3,4,5,6,8} { \fill (T\i) circle (4.5pt); \fill (B\i) circle (4.5pt); } 
\draw (T2) node  {$\cdots$}; \draw (B2) node  {$\cdots$}; \draw (T7) node  {$\cdots$}; \draw (B7) node  {$\cdots$}; 
\draw  (T4)  node[black,above=0.05cm]{$\scriptstyle{i}$};
\draw  (T5)  node[black,above=0.0cm]{$\scriptstyle{i+1}$};
\end{tikzpicture}}\end{array},
&
\ \mathfrak{p}_i = \!
\begin{array}{c}\scalebox{0.75}{\begin{tikzpicture}[scale=.55,line width=1.35pt] 
\foreach \i in {1,...,8} 
{ \path (\i,2) coordinate (T\i); \path (\i,0) coordinate (B\i); } 
\filldraw[fill=gray!25,draw=gray!25,line width=4pt]  (T1) -- (T8) -- (B8) -- (B1) -- (T1);
\draw (T1) -- (B1);
\draw (T3) -- (B3);
\draw (T5) -- (B5);
\draw (T6) -- (B6);
\draw (T8) -- (B8);
\foreach \i in {1,3,4,5,6,8} { \fill (T\i) circle (4.5pt); \fill (B\i) circle (4.5pt); } 
\draw (T2) node  {$\cdots$}; \draw (B2) node  {$\cdots$}; \draw (T7) node  {$\cdots$}; \draw (B7) node  {$\cdots$}; 
\draw  (T4)  node[black,above=0.05cm]{$\scriptstyle{i}$};
\end{tikzpicture}}\end{array},
&
\ \mathfrak{b}_i =  \!
\begin{array}{c}\scalebox{0.75}{\begin{tikzpicture}[scale=.55,line width=1.35pt] 
\foreach \i in {1,...,8} 
{ \path (\i,2) coordinate (T\i); \path (\i,0) coordinate (B\i); } 
\filldraw[fill=gray!25,draw=gray!25,line width=4pt]  (T1) -- (T8) -- (B8) -- (B1) -- (T1);
\draw (T1) -- (B1);
\draw (T3) -- (B3);
\draw (T4) .. controls +(.1,-.6) and +(-.1,-.6) .. (T5);
\draw (B4) .. controls +(.1,+.60) and +(-.1,+.6) .. (B5);
\draw (T4) -- (B4);
\draw (T5) -- (B5);
\draw (T6) -- (B6);
\draw (T8) -- (B8);
\foreach \i in {1,3,4,5,6,8} { \fill (T\i) circle (4.5pt); \fill (B\i) circle (4.5pt); } 
\draw (T2) node  {$\cdots$}; \draw (B2) node  {$\cdots$}; \draw (T7) node  {$\cdots$}; \draw (B7) node  {$\cdots$}; 
\draw  (T4)  node[black,above=0.05cm]{$\scriptstyle{i}$};
\draw  (T5)  node[black,above=0.0cm]{$\scriptstyle{i+1}$};
\end{tikzpicture}}\end{array} \\
\hskip.3in 1 \leq i \leq k-1 &\hskip.3in 1 \leq i \leq k &\hskip.3in 1 \leq i \leq k-1
\end{array}
\end{equation}
and the relations found in  \cite[Thm.~1.11]{HR}. 
It is useful in generating diagram subalgebras to define the elements 
$\mathfrak{e}_i =  \mathfrak{b}_i \mathfrak{p}_i \mathfrak{p}_{i+1} \mathfrak{b}_i$,  
$\mathfrak{l}_i =  \mathfrak{s}_i \mathfrak{p}_i $, and $\mathfrak{r}_i = \mathfrak{p}_i  \mathfrak{s}_i$ ,  so that
\begin{equation}
\label{extra-gen}
\begin{array}{ccc}
\mathfrak{e}_i =  \! 
\begin{array}{c}\scalebox{0.75}{\begin{tikzpicture}[scale=.55,line width=1.35pt] 
\foreach \i in {1,...,8} 
{ \path (\i,2) coordinate (T\i); \path (\i,0) coordinate (B\i); } 
\filldraw[fill=gray!25,draw=gray!25,line width=4pt]  (T1) -- (T8) -- (B8) -- (B1) -- (T1);
\draw (T1) -- (B1);
\draw (T3) -- (B3);
\draw (T4) .. controls +(.1,-.6) and +(-.1,-.6) .. (T5);
\draw (B4) .. controls +(.1,+.6) and +(-.1,+.6) .. (B5);
\draw (T6) -- (B6);
\draw (T8) -- (B8);
\foreach \i in {1,3,4,5,6,8} { \fill (T\i) circle (4.5pt); \fill (B\i) circle (4.5pt); } 
\draw (T2) node  {$\cdots$}; \draw (B2) node  {$\cdots$}; \draw (T7) node  {$\cdots$}; \draw (B7) node  {$\cdots$}; 
\draw  (T4)  node[black,above=0.05cm]{$\scriptstyle{i}$};
\draw  (T5)  node[black,above=0.0cm]{$\scriptstyle{i+1}$};
\end{tikzpicture}}\end{array},
&
\mathfrak{l}_i = \!
\begin{array}{c}\scalebox{.75}{\begin{tikzpicture}[scale=.55,line width=1.35pt] 
\foreach \i in {1,...,8} 
{ \path (\i,2) coordinate (T\i); \path (\i,0) coordinate (B\i); } 
\filldraw[fill=gray!25,draw=gray!25,line width=4pt]  (T1) -- (T8) -- (B8) -- (B1) -- (T1);
\draw (T1) -- (B1);
\draw (T3) -- (B3);
\draw (T4) -- (B5);
\draw (T6) -- (B6);
\draw (T8) -- (B8);
\foreach \i in {1,3,4,5,6,8} { \fill (T\i) circle (4.5pt); \fill (B\i) circle (4.5pt); } 
\draw (T2) node  {$\cdots$}; \draw (B2) node  {$\cdots$}; \draw (T7) node  {$\cdots$}; \draw (B7) node  {$\cdots$}; 
\draw  (T4)  node[black,above=0.05cm]{$\scriptstyle{i}$};
\draw  (T5)  node[black,above=0.0cm]{$\scriptstyle{i+1}$};
\end{tikzpicture}}\end{array},
&
\mathfrak{r}_i =  \!
\begin{array}{c}\scalebox{.75}{\begin{tikzpicture}[scale=.55,line width=1.35pt] 
\foreach \i in {1,...,8} 
{ \path (\i,2) coordinate (T\i); \path (\i,0) coordinate (B\i); } 
\filldraw[fill=gray!25,draw=gray!25,line width=4pt]  (T1) -- (T8) -- (B8) -- (B1) -- (T1);
\draw (T1) -- (B1);
\draw (T3) -- (B3);
\draw (T5) -- (B4);
\draw (T6) -- (B6);
\draw (T8) -- (B8);
\foreach \i in {1,3,4,5,6,8} { \fill (T\i) circle (4.5pt); \fill (B\i) circle (4.5pt); } 
\draw (T2) node  {$\cdots$}; \draw (B2) node  {$\cdots$}; \draw (T7) node  {$\cdots$}; \draw (B7) node  {$\cdots$}; 
\draw  (T4)  node[black,above=0.05cm]{$\scriptstyle{i}$};
\draw  (T5)  node[black,above=0.0cm]{$\scriptstyle{i+1}$};
\end{tikzpicture}}\end{array} \\
\hskip.3in 1 \leq i \leq k-1 &\hskip.3in 1 \leq i \leq k-1 &\hskip.3in 1 \leq i \leq k-1
\end{array}.
\end{equation}

\subsection{Subalgebras}
\label{subsec:subalgebras}

For $k,n \in  \ZZ_{\geq 1}$ with $n \geq 2k$ the following are semisimple subalgebras of the partition algebra $\Pb_k(n)$:
\begin{eqnarray*}
\CC\Sb_k & =&  \CC\hbox{-span}\left\{\, d\in \calP_k \ \bigg|\ 
\begin{array}{l}
\hbox{all blocks of $d$ have exactly one vertex in $\{1, \ldots k\}$} \\
\hbox{and exactly one vertex in $\{1', \ldots k'\}$} \\
\end{array}\right\}, \\
\R_k &=& \CC\hbox{-span}\left\{\, d\in \calP_k \ \bigg|\ 
\begin{array}{l}
\hbox{all blocks of $d$ have at most one vertex in $\{1, \ldots k\}$} \\
\hbox{and at most one vertex in $\{1', \ldots k'\}$} \\
\end{array}\right\}, \\
\B_k(n) & =&  \CC\hbox{-span}\{\, d\in \calP_k \ |\ \hbox{all blocks of $d$ have size 2}\}, \\
\RB_k(n) &=& \CC\hbox{-span}\{\, d\in \calP_k \ |\ \hbox{all blocks of $d$ have size 1 or 2}\}. 
\end{eqnarray*}
Here, $\CC\Sb_k$ is the group algebra of the symmetric group, $\B_k(n)$ is the Brauer algebra \cite{Br},  $\R_k$ is the rook monoid algebra \cite{So}, and $\RB_k(n)$ is the rook-Brauer algebra \cite{dH}, \cite{MM}.  

A set partition is {\it planar}  if it can be represented as a diagram without edge crossings inside of the rectangle formed
by its vertices.  The planar partition algebra \cite{Jo} is defined as
\begin{equation*}
\planarP_k(n) =  \CC\hbox{-span}\{\, d \in \calP_k \mid d \hbox{ is planar }\},
\end{equation*}
and following are the planar subalgebras of $\Pb_k(n)$, which are also semisimple:
\begin{equation*}
\begin{array}{rclcrcl}
\CC\mathbf{1}_k & =&  \CC \Sb_k \cap \planarP_k(n), & \hskip.4in &  \TL_k(n) & = &   \B_k(n) \cap \planarP_k(n), \\
\PR_k & =&   \R_k \cap \planarP_k(n), && \Motz_{k}(n) & = &  \RB_k(n) \cap \planarP_k(n).
\end{array}
\end{equation*}
Here, $\TL_k(n)$ is the Temperley-Lieb algebra \cite{TL}, $\PR_k$ is the planar rook monoid algebra \cite{FHH}, and $\Motz_{k}(n)$ is the Motzkin algebra \cite{BH-motzkin}. There is an algebra isomorphism $\planarP_k(n) \cong \TL_{2k}(n)$ (see \cite{Jo} or \cite{HR}) and we forgo discussion of the planar partition algebra in favor of the Temperley-Lieb algebra. The parameter $n$ does not arise when multiplying symmetric group diagrams (as there are never middle blocks to be removed). The following displays  examples from each of these subalgebras:
\begin{equation*}
\begin{array}{l c l}
 \begin{array}{c}
\scalebox{0.75}{
\begin{tikzpicture}[scale=.55,line width=1.35pt] 
\foreach \i in {1,...,10}  { \path (\i,1) coordinate (T\i); \path (\i,-1) coordinate (B\i); } 
\filldraw[fill=gray!25,draw=gray!25,line width=4pt]  (T1) -- (T10) -- (B10) -- (B1) -- (T1);
\draw[black] (T1) -- (B2);\
\draw[black] (T2) -- (B4);
\draw[black] (T3) -- (B5);
\draw[black] (T4) -- (B6);
\draw[black] (T5) -- (B1);
\draw[black] (T6) -- (B8);
\draw[black] (T7) -- (B3);
\draw[black] (T8) -- (B9);
\draw[black] (T9) -- (B10);
\draw[black] (T10) -- (B7);
\foreach \i in {1,...,10}  { \fill (T\i) circle (4.5pt); \fill (B\i) circle (4.5pt); } 
\end{tikzpicture}}\end{array}
 \in \Sb_{10} 
&  & 
\begin{array}{c}
\scalebox{0.75}{
\begin{tikzpicture}[scale=.55,line width=1.35pt] 
\foreach \i in {1,...,10}  { \path (\i,1) coordinate (T\i); \path (\i,-1) coordinate (B\i); } 
\filldraw[fill=gray!25,draw=gray!25,line width=4pt]  (T1) -- (T10) -- (B10) -- (B1) -- (T1);
\draw[black] (T1) .. controls +(.1,-.5) and +(-.1,-.5) .. (T3) ;
\draw[black] (T4) .. controls +(.1,-.75) and +(-.1,-1.1) .. (T8) ;
\draw[black] (T5) .. controls +(.1,-.5) and +(-.1,-.5) .. (T6) ;
\draw[black] (T1) -- (B1);
\draw[black] (T4) .. controls +(0,-1) and +(0,1) .. (B3);
\draw[black] (T10) .. controls +(0,-1) and +(0,1.5) .. (B6);
\draw[black] (B1) .. controls +(.1,.75) and +(-.1,.75) .. (B2) ;
\draw[black] (B3) .. controls +(.1,.75) and +(-.1,.75) .. (B5) ;
\draw[black] (B6) .. controls +(.1,.75) and +(-.1,.75) .. (B7) ;
\draw[black] (B7) .. controls +(.1,.75) and +(-.1,.75) .. (B8) ;
\draw[black] (B8) .. controls +(.1,.75) and +(-.1,.75) .. (B10) ;
\foreach \i in {1,...,10}  { \fill (T\i) circle (4.5pt); \fill (B\i) circle (4.5pt); } 
\end{tikzpicture}}\end{array}
 \in \planarP_{10}(n)  
 \\
\begin{array}{c}
\scalebox{0.75}{
\begin{tikzpicture}[scale=.55,line width=1.35pt] 
\foreach \i in {1,...,10}  { \path (\i,1) coordinate (T\i); \path (\i,-1) coordinate (B\i); } 
\filldraw[fill=gray!25,draw=gray!25,line width=4pt]  (T1) -- (T10) -- (B10) -- (B1) -- (T1);
\draw[black] (T7) -- (B9);
\draw[black] (T10) -- (B8);
\draw[black] (T9) -- (B10);
\draw[black] (T2) -- (B4);
\draw[black] (T1) .. controls +(.1,-.75) and +(-.1,-.75) .. (T3) ;
\draw[black] (T4) .. controls +(.1,-1.1) and +(-.1,-1.1) .. (T8) ;
\draw[black] (T5) .. controls +(.1,-.5) and +(-.1,-.5) .. (T6) ;
\draw[black] (B1) .. controls +(.1,.5) and +(-.1,.5) .. (B3) ;
\draw[black] (B5) .. controls +(.1,1.1) and +(-.1,1.1) .. (B7) ;
\draw[black] (B2) .. controls +(.1,1.1) and +(-.1,1.1) .. (B6) ;
\foreach \i in {1,...,10}  { \fill (T\i) circle (4.5pt); \fill (B\i) circle (4.5pt); } 
\end{tikzpicture}}\end{array}
 \in \B_{10}(n) &  & 
 \begin{array}{c}
\scalebox{0.75}{
\begin{tikzpicture}[scale=.55,line width=1.35pt] 
\foreach \i in {1,...,10}  { \path (\i,1) coordinate (T\i); \path (\i,-1) coordinate (B\i); } 
\filldraw[fill=gray!25,draw=gray!25,line width=4pt]  (T1) -- (T10) -- (B10) -- (B1) -- (T1);
\draw[black] (T8)  .. controls +(-.1,-.75) and +(.1,.75) .. (B6);
\draw[black] (T10) -- (B10);
\draw[black] (T9)  .. controls +(-.1,-.75) and +(.1,.75) .. (B7);
\draw[black] (T3)  .. controls +(.1,-.75) and +(-.1,.75) .. (B5);
\draw[black] (T1) .. controls +(.1,-.75) and +(-.1,-.75) .. (T2) ;
\draw[black] (T4) .. controls +(.1,-1.1) and +(-.1,-1.1) .. (T7) ;
\draw[black] (T5) .. controls +(.1,-.5) and +(-.1,-.5) .. (T6) ;
\draw[black] (B2) .. controls +(.1,.5) and +(-.1,.5) .. (B3) ;
\draw[black] (B8) .. controls +(.1,.75) and +(-.1,.75) .. (B9) ;
\draw[black] (B1) .. controls +(.1,1.1) and +(-.1,1.1) .. (B4) ;
\foreach \i in {1,...,10}  { \fill (T\i) circle (4.5pt); \fill (B\i) circle (4.5pt); } 
\end{tikzpicture}}\end{array}
 \in \TL_{10}(n)
 \\
 \begin{array}{c}
\scalebox{0.75}{
\begin{tikzpicture}[scale=.55,line width=1.35pt] 
\foreach \i in {1,...,10}  { \path (\i,1) coordinate (T\i); \path (\i,-1) coordinate (B\i); } 
\filldraw[fill=gray!25,draw=gray!25,line width=4pt]  (T1) -- (T10) -- (B10) -- (B1) -- (T1);
\draw[black] (T7) -- (B10);
\draw[black] (T10) -- (B8);
\draw[black] (T2) -- (B4);
\draw[black] (T1) .. controls +(.1,-.75) and +(-.1,-.75) .. (T4) ;
\draw[black] (T3) .. controls +(.1,-1.1) and +(-.1,-1.1) .. (T8) ;
\draw[black] (B1) .. controls +(.1,.5) and +(-.1,.5) .. (B3) ;
\draw[black] (B5) .. controls +(.1,1.1) and +(-.1,1.1) .. (B7) ;
\draw[black] (B2) .. controls +(.1,1.1) and +(-.1,1.1) .. (B6) ;
\foreach \i in {1,...,10}  { \fill (T\i) circle (4.5pt); \fill (B\i) circle (4.5pt); } 
\end{tikzpicture}}\end{array}
 \in \RB_{10}(n)   &  & 
 \begin{array}{c}
\scalebox{0.75}{
\begin{tikzpicture}[scale=.55,line width=1.35pt] 
\foreach \i in {1,...,10}  { \path (\i,1) coordinate (T\i); \path (\i,-1) coordinate (B\i); } 
\filldraw[fill=gray!25,draw=gray!25,line width=4pt]  (T1) -- (T10) -- (B10) -- (B1) -- (T1);
\draw[black] (T8)  .. controls +(-.1,-.75) and +(.1,.75) .. (B6);
\draw[black] (T10) -- (B10);
\draw[black] (T3)  .. controls +(.1,-.75) and +(-.1,.75) .. (B5);
\draw[black] (T1) .. controls +(.1,-.75) and +(-.1,-.75) .. (T2) ;
\draw[black] (T5) .. controls +(.1,-.5) and +(-.1,-.5) .. (T6) ;
\draw[black] (B2) .. controls +(.1,.5) and +(-.1,.5) .. (B3) ;
\draw[black] (B8) .. controls +(.1,.75) and +(-.1,.75) .. (B9) ;
\draw[black] (B1) .. controls +(.1,1.1) and +(-.1,1.1) .. (B4) ;
\foreach \i in {1,...,10}  { \fill (T\i) circle (4.5pt); \fill (B\i) circle (4.5pt); } 
\end{tikzpicture}}\end{array}
 \in \Motz_{10}(n)  \\
 \begin{array}{c}
\scalebox{0.75}{
\begin{tikzpicture}[scale=.55,line width=1.35pt] 
\foreach \i in {1,...,10}  { \path (\i,1) coordinate (T\i); \path (\i,-1) coordinate (B\i); } 
\filldraw[fill=gray!25,draw=gray!25,line width=4pt]  (T1) -- (T10) -- (B10) -- (B1) -- (T1);
\draw[black] (T1) -- (B2);
\draw[black] (T3) -- (B5);
\draw[black] (T5) -- (B1);
\draw[black] (T6) -- (B8);
\draw[black] (T7) -- (B3);
\draw[black] (T8) -- (B9);
\draw[black] (T9) -- (B10);
\draw[black] (T10) -- (B7);
\foreach \i in {1,...,10}  { \fill (T\i) circle (4.5pt); \fill (B\i) circle (4.5pt); } 
\end{tikzpicture}}\end{array}
 \in \R_{10} 
&  & 
 \begin{array}{c}
\scalebox{0.75}{
\begin{tikzpicture}[scale=.55,line width=1.35pt] 
\foreach \i in {1,...,10}  { \path (\i,1) coordinate (T\i); \path (\i,-1) coordinate (B\i); } 
\filldraw[fill=gray!25,draw=gray!25,line width=4pt]  (T1) -- (T10) -- (B10) -- (B1) -- (T1);
\draw[black] (T1) -- (B2);
\draw[black] (T3) -- (B3);
\draw[black] (T5) -- (B4);
\draw[black] (T6) -- (B5);
\draw[black] (T7) -- (B7);
\draw[black] (T8) -- (B9);
\draw[black] (T10) -- (B10);
\foreach \i in {1,...,10}  { \fill (T\i) circle (4.5pt); \fill (B\i) circle (4.5pt); } 
\end{tikzpicture}}\end{array}
 \in \PR_{10}
\end{array}
\end{equation*}

Each  diagram algebra $\A_k$ is generated as a unital subalgebra  $\A_k \subseteq \Pb_k(n)$ of the partition algebra using a subset of the generators 
$\mathfrak{s}_i,  \mathfrak{b}_i, \mathfrak{e}_i, \mathfrak{l}_i,\mathfrak{r}_i$ for $1 \leq i \leq k-1$ and $\mathfrak{p}_i$ for $1 \leq i \leq k$ as shown in the following table.
\begin{equation*}
\begin{array}{cl}
\text{Algebra} &  \text{Generators} \\
\hline
\Pb_k(n) &  \mathfrak{s}_i,  \mathfrak{b}_i,  \mathfrak{p}_i  \\
\CC \Sb_k & \mathfrak{s}_i  \\
\R_k & \mathfrak{s}_i,  \mathfrak{p}_i \\
\end{array}
\hskip.5in
\begin{array}{cl}
\text{Algebra} &  \text{Generators} \\
\hline
\B_k(n) & \mathfrak{s}_i,  \mathfrak{e}_i \\
\RB_k(n) & \mathfrak{s}_i,  \mathfrak{e}_i,  \mathfrak{p}_i \\
\planarP_k(n) & \mathfrak{p}_i,  \mathfrak{b}_i  \\
\end{array}
\hskip.5in
\begin{array}{cl}
\text{Algebra} &  \text{Generators} \\
\hline
\TL_k(n) & \mathfrak{e}_i \\
\Motz_k(n) & \mathfrak{e}_i, \mathfrak{l}_i,\mathfrak{r}_i  \\
\PR_k & \mathfrak{l}_i,\mathfrak{r}_i \\
\end{array}
\end{equation*}
Typically the rook monoid and planar rook monoid algebras do not have the parameter $\param$ \cite{So},\cite{Ha-rook}, and are recovered by replacing the generator $\mathfrak{p}_i$ with $\frac{1}{\param} \mathfrak{p}_i$. 

\subsection{Basic construction}
\label{sec:BasicConstruction}

Let $\A_k \subseteq \Pb_k(n)$ be the partition algebra or one of the subalgebras described in Section~\ref{subsec:subalgebras} and let $\calA_k\subseteq \calP_k$ be its diagram basis. 
There is a natural embedding of $\A_{r-1}$ as a subalgebra of $\A_r$  by placing an identity edge to the right of any diagram in $\A_{r-1}$ thus forming a tower of algebras: $\A_0 \subseteq \A_1 \subseteq \A_2 \subseteq \cdots \subseteq \A_{k-1} \subseteq \A_k.$

A block in a diagram $d\in \calA_{k}$ is a \emph{propagating block} if it contains vertices from both the top and bottom row, and the \emph{rank} (also called the propagating number) of $d$, denoted $\pn(d)$, is the number of propagating blocks of $d$. For $d_1, d_2 \in \calA_k$ we have
$
\pn(d_1\circ d_2) \leq \min(\pn(d_1),\pn(d_2)),
$
and thus the multiplication of diagrams can never increase the rank.  It follows that  
\begin{equation}\label{eq:ideal}
\J_m := \CC\text{-span}\{d \in \calA_{k}  \mid \text{$\pn(d) \leq m$}\}, \qquad 0 \leq m \leq k,
\end{equation}
is a two-sided ideal in $\A_k$ and we have the filtration 
\begin{equation}
\J_0 \subseteq \J_1 \subseteq \J_2 \subseteq \cdots \subseteq \J_{k-1} \subseteq \J_k = \A_k.
\end{equation}
In the case of the Brauer algebra $\B_k(n)$ and the Temperley-Lieb algebra $\TL_k(n)$ we have $\J_{k-1} = \J_k$,  $\J_{k-3} = \J_{k-2}$, and so on, since the rank of diagrams in these algebras have the same parity as $k$.

For each $m \ge 1$ we have
\begin{equation}\label{eq:BasicConstruction}
\A_m \cong \J_{m-1} \oplus \C_m,
\end{equation}
where $\C_m$ is the span of the diagrams of rank exactly equal to $m$. The isomorphism in ~\eqref{eq:BasicConstruction} is  the Jones basic construction for $\A_m$.  In our examples,
\begin{equation}\label{eq:planarJBC}
\begin{array}{ll}
\C_m \cong \CC \Sb_m &  \hbox{when $\A_k$ is one of the non-planar algebras  $\Pb_k(n), \B_k(n), \RB_k(n)$, or  $\R_k$}, \\
\C_m \cong \CC{\bf 1}_m & \hbox{when $\A_k$  is one of the planar algebras $\TL_k(n),  \Motz_k(n),$ or $\PR_k$.} \\
\end{array}
\end{equation}
We let ${\Gamma\!}_{\A_k}$ denote the set of possible diagram ranks in $\A_k$, so that
\begin{equation}\label{def:possiblerank}
{\Gamma\!}_{\A_k} = 
\begin{cases}
\left\{m \mid 0 \leq m \leq k\right\}, & \text{if $\A_k$ equals $\Pb_k(n), \RB_k(n), \R_k, \Motz_k(n),$ or  $\PR_k$},\\
\left\{k-2\ell  \mid  0 \leq \ell \leq \lfloor k/2 \rfloor \right\}, & \text{if $\A_k$ equals $\B_k(n)$ or $\TL_k(n)$.}
\end{cases}
\end{equation}
It follows from the basic construction that the irreducible modules of $\J_{m-1}$ are labelled by the same set as the irreducible modules for $\A_{m-1}$ (see \cite[Sec.~4.2]{HReeks}), so if
 $\Lambda^{\A_k}$ indexes the irreducible modules for $\A_k$, then  \eqref{eq:BasicConstruction} gives
\begin{equation}\label{eq:JBClabels}
\Lambda^{\A_k}   = \bigsqcup_{m  \in {\Gamma\!}_{\A_k} } \Lambda^{\C_m} =
\begin{cases} 
\displaystyle{\bigsqcup_{m  \in {\Gamma\!}_{\A_k} }} \{ \mu \vdash m\}, &  \text{if $\A_k$ is non-planar}, \\
\quad {\Gamma\!}_{\A_k}, &  \text{if $\A_k$ is planar}, \\
\end{cases}
\end{equation}
where the second equality comes from~\eqref{eq:planarJBC} and the fact that the irreducible modules for the group algebra $\CC \Sb_m$ of the symmetric group are indexed by the set $\{ \mu \vdash m\}$ of integer partitions of $m$.

\subsection{Schur-Weyl duality}
\label{subsec:schur-weyl}

For $k, n\in \ZZ_{\geq 1}$ the partition algebra $\Pb_k(n)$ and the symmetric group $\Sb_n$ are in Schur-Weyl duality on the $k$-fold tensor product $\V_n^{\otimes k}$ of the $n$-dimensional permutation module $\V_n$ of the symmetric group $\Sb_n$  (see \cite{Jo} or \cite{HR}). In particular, there is a surjective algebra homomorphism $\Pb_k(n) \to \End(\V_n^{\otimes k})$  such that the actions of $\Pb_k(n)$ and $\Sb_n$ on $\V^{\otimes k}$ commute. When $n \geq 2k$ the representation of $\Pb_k(n)$ on $\V_n^{\otimes k}$ is faithful and $\Pb_k(n) \cong \End_{\Sb_n}(\V_n^{\otimes k})$, the centralizer algebra of $\Sb_n$ on $\V_n^{\otimes k}$. 

For $n \geq 2k$, the decomposition of $\V_n^{\otimes k}$ as a bimodule for $(\Pb_k(n),\CC \Sb_n)$ is given by
\begin{equation}
\V_n^{\otimes k} \cong \bigoplus_{\lambda \in \Lambda_{k,n}} \Pb^{\,\lambda}_k \otimes \Sb_n^\lambda,
\end{equation}
where  $\Lambda_{k,n}$ indexes the irreducible $\Sb_n$ modules that appear as constituents of $\V_n^{\otimes k}$. Since irreducible $\Sb_n$ modules are indexed by partitions of $n$ we have $\Lambda_{k,n} \subseteq \{ \lambda \vdash n\},$ and it is easy to show by induction on $k$ (see, for example \cite{HR,BH-invariant2}), that
\begin{equation}
\Lambda_{k,n} = \left\{ \lambda \vdash n \mid 0 \leq |\lambda^\ast| \leq k \right\},
\end{equation}
where if $\lambda = [\lambda_1, \lambda_2, \ldots, \lambda_\ell]$ is an integer partition of $n$ then $\lambda^\ast = [\lambda_2, \ldots, \lambda_\ell]$ is the partition $\lambda$ with its first part removed as illustrated here 
\begin{equation}
\lambda = 
\begin{array}{c}
\begin{tikzpicture}[xscale=0.3, yscale=0.3]
\fill[gray!25] (0,3) rectangle (8,4);
\draw (0,0) -- (1,0) -- (1,1) -- (2,1) -- (2,2) -- (5,2) -- (5,3) -- (0,3) -- (0,0);
\draw (0,3) -- (8,3) -- (8,4) -- (0,4) -- (0,3);
\path (1,2) node {$\lambda^\ast$};
\end{tikzpicture}
\end{array}.
\end{equation}

We now have two ways to index the irreducible $\Pb_k(n)$-modules:  from the basic construction $\Lambda^{\Pb_k(n)} = \left\{ \mu \vdash m \mid 0 \leq |\mu| \leq k\right\}$ and  from Schur-Weyl duality $\Lambda_{k,n} = \left\{ \lambda \vdash n \mid 0 \leq |\lambda^\ast| \leq k\right\}$. When $n \geq 2k$, they are in bijection by identifying $\lambda \in \Lambda_{k,n}$ with $\lambda^\ast \in \Lambda^{\Pb_k(n)}$. The set-partition tableaux that we use in Section~\ref{sec:tableaux} require partitions of $n$, so we use $\Lambda_{k,n}$ for the remainder of this paper. To this end, for each $\A_k$ we add a first row of size $n-m$ to the partitions in $\Lambda^{\A_k}$ to get the partitions in $\Lambda^{\A_k}_n$ so that
\begin{equation}\label{index-set-n}
\Lambda^{\A_k}_n = \left\{ \lambda \vdash n \mid \lambda^\ast \in \Lambda^{\A_k}\right\}.
\end{equation}
These sets are given below for each of the diagram algebras. To unify our notation we view $\CC\mathbf{1}_m$ as the trivial subalgebra of $\CC\Sb_m$ and label its irreducible representation with the partition $[m]$, the index of the trivial module $\Sb_m^{[m]}$.
\begin{equation*}
\begin{array}{lll}
\A_k &  \Lambda^{\A_k}   &   \Lambda_n^{\A_k}  \\ \hline
\Pb_k(n), \RB_k(n), \R_k  &  \left\{ \mu \vdash m \mid 0 \leq m \leq k \right\} \phantom{\Big\vert} &  \left\{ \lambda \vdash n  \mid  |\lambda^\ast| = m, 0 \leq m \leq k \right\}  \\
\B_k(n) & \left\{ \mu \vdash k - 2 \ell \mid 0 \leq \ell \leq \lfloor k/2 \rfloor \right\}  &  \left\{\lambda \vdash n  \mid |\lambda^\ast| = k-2\ell, 0 \leq \ell \leq \lfloor k/2 \rfloor \right\}   \\
\Motz_k(n), \PR_k &  \left\{ m  \mid 0 \leq m \leq k \right\} \phantom{\Big\vert}  &  \left\{  [n-m,m]  \mid   0 \leq m \leq k \right\} \\
\TL_k(n)& \left\{ k - 2 \ell \mid 0 \leq \ell \leq \lfloor k/2 \rfloor \right\}  &  \left\{  [n-m,m]  \mid m = k-2\ell, 0 \leq \ell \leq \lfloor k/2 \rfloor \right\} 
\end{array}
\end{equation*}


\section{Irreducible Modules}
\label{sec:modules}
\noindent
In this section, for each $\lambda \in \Lambda^{\A_k}_n$ with $|\lambda^\ast| = m$, we identify  a copy of the irreducible $\A_k$ module indexed by $\lambda$ in the quotient $\A_k/\J_{m-1}$ of the left regular representation of $\A_k$ by the ideal $\J_{m-1}$ defined in \eqref{eq:ideal}.   We then give a combinatorial realization of this module, $\A_k^\lambda = \WWm \ot \Sb^{\lambda^\ast}_m$, where $\WWm$ is the span of symmetric $m$-diagrams in $\calA_k$ that $\A_k$ acts on by conjugation and $\Sb_m^{\lambda^\ast}$ is an irreducible symmetric group module. When a diagram $d \in \calA_k$ conjugates a symmetric $m$-diagram $w$ it permutes the $m$ fixed points of of $w$ by a permutation $\sigma_{d,w} \in \Sb_m$ which in turn acts on $\Sb_m^{\lambda^\ast}$. We view this as conjugation that is ``twisted" by the module $\Sb_m^{\lambda^\ast}$.  This construction is similar to the Gelfand model for diagram algebras in \cite{HReeks} and \cite{KM-model}.

\subsection{Symmetric group modules}
\label{subsec:symmetricgroupmodules}

For each partition $\mu \vdash m$, there is an irreducible module $\Sb_m^\mu$ for the symmetric group $\Sb_m$.  The dimension of $\Sb_m^\mu$ equals the number $f^\mu$ of standard Young tableaux of shape $\mu$, where a Young tableau of shape $\mu$ is a filling of the boxes of the diagram of $\mu$ with the numbers $1, 2, \ldots, m$ and a Young tableau is standard if the rows increase from left to right and the columns increase from top to bottom. We let $\SYT(\mu)$ denote the set of standard Young tableaux of shape $\mu$. For example, there are five standard Young tableaux of shape $\mu =[3,2]$:
\begin{equation*}\label{SYT32}
\SYT([3,2]) = \left\{
t_1 =\!\!\!\! \begin{array}{c}
\scalebox{0.95}{
\begin{tikzpicture}[scale=.45]
\draw (2,2) -- (2,0) -- (0,0) -- (0,2) -- (3,2) -- (3,1) -- (0,1);
\draw (1,0) -- (1,2);
\path (0.5,1.5) node {$1$}; \path (1.5,1.5) node {$3$}; \path (2.5,1.5) node {$5$};
\path (0.5,0.5) node {$2$}; \path (1.5,0.5) node {$4$};
\end{tikzpicture}\!} 
\end{array}, \
t_2 =\!\!\!\! \begin{array}{c}
\scalebox{0.95}{
\begin{tikzpicture}[scale=.45]
\draw (2,2) -- (2,0) -- (0,0) -- (0,2) -- (3,2) -- (3,1) -- (0,1);
\draw (1,0) -- (1,2);
\path (0.5,1.5) node {$1$}; \path (1.5,1.5) node {$3$}; \path (2.5,1.5) node {$4$};
\path (0.5,0.5) node {$2$}; \path (1.5,0.5) node {$5$};
\end{tikzpicture}\!} 
\end{array}, \
t_3 =\!\!\!\! \begin{array}{c}
\scalebox{0.95}{
\begin{tikzpicture}[scale=.45]
\draw (2,2) -- (2,0) -- (0,0) -- (0,2) -- (3,2) -- (3,1) -- (0,1);
\draw (1,0) -- (1,2);
\path (0.5,1.5) node {$1$}; \path (1.5,1.5) node {$2$}; \path (2.5,1.5) node {$5$};
\path (0.5,0.5) node {$3$}; \path (1.5,0.5) node {$4$};
\end{tikzpicture}\!} 
\end{array}, \
t_4 =\!\!\!\! \begin{array}{c}
\scalebox{0.95}{
\begin{tikzpicture}[scale=.45]
\draw (2,2) -- (2,0) -- (0,0) -- (0,2) -- (3,2) -- (3,1) -- (0,1);
\draw (1,0) -- (1,2);
\path (0.5,1.5) node {$1$}; \path (1.5,1.5) node {$2$}; \path (2.5,1.5) node {$4$};
\path (0.5,0.5) node {$3$}; \path (1.5,0.5) node {$5$};
\end{tikzpicture}\!} 
\end{array}, \
t_5 =\!\!\!\! \begin{array}{c}
\scalebox{0.95}{
\begin{tikzpicture}[scale=.45]
\draw (2,2) -- (2,0) -- (0,0) -- (0,2) -- (3,2) -- (3,1) -- (0,1);
\draw (1,0) -- (1,2);
\path (0.5,1.5) node {$1$}; \path (1.5,1.5) node {$2$}; \path (2.5,1.5) node {$3$};
\path (0.5,0.5) node {$4$}; \path (1.5,0.5) node {$5$};
\end{tikzpicture}\!} 
\end{array}
\right\}.
\end{equation*}
The \emph{column-reading tableau} $t_c$ (resp., row-reading tableau $t_r$) is the standard tableau obtained by entering the numbers $1,2,...,m$ consecutively down the columns (across the rows) of $\mu$. In the example above $t_1 = t_c$ and $t_5 = t_r$.

For $\mu \vdash m$, define the \emph{Young symmetrizer} (see, for example \cite[1.5.4]{JK}), 
\begin{equation}
p_\mu =  \sum_{\gamma \in C(t_c)}\sum_{\rho \in R(t_c)} \mathsf{sign}(\gamma)  \gamma \rho \in \CC \Sb_m, 
\end{equation}
where $C(t_c)$ and $R(t_c)$ are the row and column group of $t_c$, respectively, and $\mathsf{sign}(\gamma)$ is the sign of the permutation $\gamma$. 
That is $C(t_c) \subseteq \Sb_m$ is the subgroup of permutations that preserve the rows of $t_c$ and $R(t_c)$ is the subgroup that preserves the columns. Then $(\CC \Sb_m) p_\mu$ is a copy of the irreducible module $\Sb_m^\mu$ in the left regular representation $\CC \Sb_m$. 

For any Young tableau $t$ of shape $\mu$ let $\sigma_{t} \in \Sb_m$ be the the permutation defined by $\sigma_{t} (t_c) = t$. Then a basis of $\Sb_m^\mu$ is given by $\{\n_t :=  \sigma_t p_\mu \mid t \in \SYT(\mu) \}$  (see, for example, \cite{GM} for a proof of this classical result).  If $t \in \SYT(\mu)$ and $\pi \in \Sb_m$, then
\begin{equation}
\pi \n_t = \pi \sigma_t p_\mu = \sigma_{\pi(t)} p_\mu = \n_{\pi(t)}.
\end{equation}
If $\pi(t)$ is a standard tableau, then $\n_{\pi(t)}$ is another basis element of $\Sb_m^\mu$; otherwise,  $\n_{\pi(t)}$ can be expanded as an \emph{integer} linear combination of basis elements (i.e., indexed by standard tableaux) using a straightening algorithm such as tableaux intersection \cite{GM} or Garnir relations (see, for example, \cite{JK}, \cite{sagan2001symmetric}, \cite{Ra-skew}). The basis $\{\n_t =  \sigma_t p_\mu \mid t \in \SYT(\mu) \}$ is Young's natural basis of $\Sb_m^\mu$.

\subsection{Irreducible $\A_k$ modules in the regular representation}
\label{subsec:symmetricdiagrams}

For $0 \le m \le k$, recall the definition of the ideal $\J_m$ from \eqref{eq:ideal} and define the quotient map 
\begin{equation}
\begin{array}{cccc}
\Psi_{k,m}: & \A_k & \longrightarrow & \A_k/\J_m \\
& a & \mapsto & a + \J_m
\end{array},
\end{equation}
which is a surjective algebra homomorphism. 
For $0 \le m \le k$, define
\begin{equation}
\e_m = \begin{cases}
\frac{1}{n^{k-m}} \mathfrak{p}_{m+1} \cdots \mathfrak{p}_k, & \text{if $\A_k$ equals $\Pb_k(n), \RB_k(n), \R_k, \Motz_k(n),$ or  $\PR_k$}, \\
\frac{1}{n^{\ell}} \mathfrak{e}_{m+1}  \mathfrak{e}_{m+3} \cdots \mathfrak{e}_{k-1}, & 
 \text{if $\A_k$ equals $\B_k(n)$ or $\TL_k(n)$  and $m = k - 2 \ell$.}\\
\end{cases}
\end{equation}
so that, for example, 
\begin{align*}
 \e_4 &= \frac{1}{n^4}
\begin{array}{c}
\scalebox{1}{
\begin{tikzpicture}[xscale=.55,yscale=.55,line width=1.25pt] 
\foreach \i in {1,...,8} 
{ \path (\i,1.5) coordinate (T\i); \path (\i,0) coordinate (B\i); } 
\filldraw[fill= gray!30,draw=gray!30,line width=5pt]  (T1) -- (T8) -- (B8) -- (B1) -- (T1);
\draw[black] (T1) -- (B1) ;
\draw[black] (T2) -- (B2) ;
\draw[black] (T3) -- (B3) ;
\draw[black] (T4) -- (B4) ;
\foreach \i in {1,...,8} 
{ \fill (T\i) circle (4pt); \fill (B\i) circle (4pt); }  
\end{tikzpicture} }
\end{array} \in \Pb_{8}(n)\, \RB_8(n), \R_8, \Motz_8(n), \hbox{ or  } \PR_8, \\
 \e_4 & = \frac{1}{n^2}
\begin{array}{c}
\scalebox{1}{
\begin{tikzpicture}[xscale=.55,yscale=.55,line width=1.25pt] 
\foreach \i in {1,...,8} 
{ \path (\i,1.5) coordinate (T\i); \path (\i,0) coordinate (B\i); } 
\filldraw[fill= gray!30,draw=gray!30,line width=5pt]  (T1) -- (T8) -- (B8) -- (B1) -- (T1);
\draw[black] (T1) -- (B1) ;
\draw[black] (T2) -- (B2) ;
\draw[black] (T3) -- (B3) ;
\draw[black] (T4) -- (B4) ;
\draw[black] (T5) .. controls +(.1,-.5) and +(-.1,-.5) .. (T6);
\draw[black] (T7) .. controls +(.1,-.5) and +(-.1,-.5) .. (T8);
\draw[black] (B5) .. controls +(.1,.5) and +(-.1,.5) .. (B6);
\draw[black] (B7) .. controls +(.1,.5) and +(-.1,.5) .. (B8);
\foreach \i in {1,...,8} 
{ \fill (T\i) circle (4pt); \fill (B\i) circle (4pt); }  
\end{tikzpicture} }
\end{array} \in \B_{8}(n) \hbox{ or } \TL_8(n).
\end{align*}

For $\lambda \in \Lambda_{n}^{\A_k}$ with $|\lambda^\ast| = m$, define 
\begin{equation}
\e_\lambda = p_{\lambda^\ast} \e_m = \e_m p_{\lambda^\ast}.
\end{equation} 
If $n \in \CC$ is chosen such that $\A_k$ is semisimple, then the following theorem tells us that $\e_\lambda$ is the minimal idempotent corresponding to the irreducible $\A_k$-module indexed by $\lambda$. The proof in \cite{HR} is for $\A_k = \Pb_k(n)$ but it extends without alteration to the other diagram subalgebras using \eqref{eq:BasicConstruction} and \eqref{eq:planarJBC}.

\begin{thm}\label{thm:HRirreducible} \cite[Prop.~2.43]{HR} If $n \in \CC$ such that $\A_k$ is semisimple and  $\lambda \in \Lambda_{n}^{\A_k}$ with $|\lambda^\ast| = m$, then
\begin{equation*}
\Psi_{k,m}(\A_k \e_\lambda) = (\A_k \e_\lambda)/\J_{m-1}
\end{equation*}
is the irreducible $\A_k(n)$ module indexed by $\lambda$.
\end{thm}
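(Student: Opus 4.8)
The plan is to work entirely inside the semisimple algebra $\A_k$, decompose $\A_k\e_\lambda$ via Wedderburn theory, and reduce the whole statement to two computations: how the idempotent $\e_m$ and the Young symmetrizer $p_{\lambda^*}$ act on the irreducible modules $\A_k^\mu$. The only nontrivial input will be the behaviour of $\e_m$ on the ``top layer'' $|\mu|=m$, which is controlled by the Jones basic construction \eqref{eq:BasicConstruction}--\eqref{eq:planarJBC}.

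First I would record the elementary structure of $\e_\lambda$. A direct diagram computation gives $\mathfrak{p}_i^2=n\mathfrak{p}_i$ (respectively $\mathfrak{e}_i^2=n\mathfrak{e}_i$), so the normalization in the definition of $\e_m$ makes $\e_m$ an idempotent whose rank $\pn(\e_m)$ equals $m$. The first $m$ strands of $\e_m$ are propagating, and permuting them realizes a copy of $\CC\Sb_m$ inside $\A_k$ (for the non-planar $\A_k$, this is just $\Sb_m\subseteq\Sb_k$; for the planar ones $\C_m\cong\CC\mathbf 1_m$ is one-dimensional and this copy degenerates to the identity). Taking $p_{\lambda^*}$ in this copy, it commutes with $\e_m$, so $\e_\lambda=p_{\lambda^*}\e_m=\e_m p_{\lambda^*}$ and $\e_\lambda^2$ is a nonzero scalar multiple of $\e_\lambda$. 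Hence, after rescaling, $\e_\lambda$ is idempotent, $\A_k\e_\lambda$ is projective, and by semisimplicity $\A_k\e_\lambda\cong\bigoplus_\mu(\A_k^\mu)^{\oplus c_\mu}$ with $c_\mu=\dim(\e_\lambda\,\A_k^\mu)$.

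The heart is the computation of $c_\mu$. Since $\pn(\e_m)=m$ we have $\e_m\in\J_m$, and because the simple $\A_k^\mu$ with $|\mu|=r$ is annihilated by $\J_{r-1}$, every $\mu$ with $|\mu|>m$ satisfies $\e_m\A_k^\mu=0$, giving $c_\mu=0$. For the top layer $|\mu|=m$ I would use the corner algebra: sandwiching a diagram between two copies of $\e_m$ confines the action to the first $m$ strands and produces middle loops whose powers of $n$ are cancelled by the normalization, so $\e_m\A_k\e_m\cong\A_m$; under the associated Schur functor $M\mapsto\e_m M$ the full-rank simple $\A_k^\mu$ is sent to the full-rank simple $\A_m$-module indexed by $\mu$. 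By \eqref{eq:BasicConstruction} and \eqref{eq:planarJBC} this latter module factors through $\A_m/\J_{m-1}\cong\C_m$, hence is exactly $\Sb_m^\mu$ (the trivial $\Sb_m^{[m]}$ in the planar case); that is, $\e_m\A_k^\mu\cong\Sb_m^\mu$ as a module for the embedded $\CC\Sb_m$. Applying the symmetrizer and using that $\CC\Sb_m\,p_{\lambda^*}\cong\Sb_m^{\lambda^*}$ is a single copy of the irreducible, $p_{\lambda^*}$ acts with rank $1$ on $\Sb_m^{\lambda^*}$ and with rank $0$ on every other simple, so for $|\mu|=m$ we obtain $c_\mu=\dim\!\big(p_{\lambda^*}\,\e_m\A_k^\mu\big)=\dim\!\big(p_{\lambda^*}\Sb_m^\mu\big)=\delta_{\mu,\lambda^*}$.

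Assembling the multiplicities gives $\A_k\e_\lambda\cong\A_k^\lambda\oplus\big(\bigoplus_{|\mu|<m}(\A_k^\mu)^{\oplus c_\mu}\big)$, where by \eqref{eq:ideal} every summand of the second term lies in $\J_{m-1}$. Passing to the image in $\A_k/\J_{m-1}$ therefore annihilates exactly these lower summands and leaves a single copy of $\A_k^\lambda$, so the quotient is irreducible and indexed by $\lambda$, as claimed. I expect the main obstacle to be the corner-algebra step $\e_m\A_k\e_m\cong\A_m$ together with the identification of the Schur-functor image of the top-layer simple with $\Sb_m^\mu$: this is where the diagram calculus, the cancellation of the powers of $n$, and the planar/non-planar dichotomy of \eqref{eq:planarJBC} all enter, and it is precisely the part that must be verified to run uniformly across $\Pb_k(n)$ and each of its diagram subalgebras.
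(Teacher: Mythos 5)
Your argument is correct, and it is essentially the argument underlying the result the paper invokes: the paper gives no proof of its own here beyond citing \cite[Prop.~2.43]{HR} and remarking that the proof extends to the subalgebras via \eqref{eq:BasicConstruction} and \eqref{eq:planarJBC}, and your Wedderburn-plus-corner-algebra computation ($\e_\lambda$ idempotent up to scalar, $c_\mu=\dim(\e_\lambda\A_k^\mu)$, vanishing for rank $>m$ via $\e_m\in\J_m$, and $\e_m\A_k\e_m\cong\A_m$ with the Schur functor identifying the rank-$m$ simples with $\CC\Sb_m$-simples so that $p_{\lambda^\ast}$ selects exactly $\Sb_m^{\lambda^\ast}$) is a faithful reconstruction of that basic-construction argument, carried out uniformly in $\A_k$. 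The one step you rightly flag --- the isomorphism $\e_m\A_k\e_m\cong\A_m$ with the powers of $n$ cancelled by the normalization of $\e_m$, and the fact that the labelling of rank-$m$ simples is \emph{defined} through this Schur functor so that $\e_m\A_k^\mu\cong\Sb_m^{\mu^\ast}$ --- is precisely the content of the basic construction in \cite{HR}, so no genuine gap remains.
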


We now construct an explicit diagram basis of $(\A_k \e_\lambda)/\J_{m-1}$.

\begin{definition} An \emph{$m$-factor} is a diagram $d \in \calA_k$ such that the following hold: (1) $\pn(d) = m$; (2)  the first $m$ vertices in the bottom row of $d$ propagate; (3a) the last $k-m$ vertices in the bottom row of $d$ are isolated if $\A_k$ equals $\Pb_k(n), \RB_k(n), \R_k, \Motz_k(n),$  $\PR_k$; and (3b) the last $k-m$ vertices in the bottom row of $d$ are are paired with their neighbor if $\A_k$ equals $\B_k(n)$ or $\TL_k(n)$ and $n-m$ is even.  Furthermore, we say that an $m$-factor is \emph{noncrossing} if the propagating edges of $d$ do not cross when $d$ is drawn in such a way that \emph{the propagating edges connect to the rightmost vertex of the block in the top row}.
\end{definition}

An $m$-factor $d$ has a unique decomposition $d = \omega \sigma$ such that $\omega$ is a noncrossing $m$-factor and $\sigma \in \Sb_m$, as illustrated in the following example, which holds in $\Pb_{10}(n)$,
\begin{align}\label{eg:mfactor}
d = 
\begin{array}{c}
\scalebox{1}{
\begin{tikzpicture}[xscale=.55,yscale=.55,line width=1.25pt] 
\foreach \i in {1,...,10} 
{ \path (\i,1.5) coordinate (T\i); \path (\i,0) coordinate (B\i); } 
\filldraw[fill= gray!30,draw=gray!30,line width=5pt]  (T1) -- (T10) -- (B10) -- (B1) -- (T1);
\draw[black] (T1) .. controls +(.1,-.4) and +(-.1,-.4) .. (T2) ;
\draw[black] (T2) -- (B2) ;
\draw[black] (T3) .. controls +(.1,-.4) and +(-.1,-.4) .. (T4) ;
\draw[black] (T4) .. controls +(.1,-.5) and +(-.1,-.5) .. (T6) ;
\draw[black] (T6) .. controls +(.1,-.6) and +(-.1,.4) .. (B4) ;
\draw[black] (T7) .. controls +(.1,-0.8) and +(-.1,1) .. (B3) ;
\draw[black] (T8) .. controls +(.1,-.5) and +(-.1,-.5) .. (T10) ;
\draw[black] (T9) .. controls +(.1,-1.5) and +(-.1,1) .. (B1) ;
\foreach \i in {1,...,10} 
{ \fill (T\i) circle (4pt); \fill (B\i) circle (4pt); } 
\end{tikzpicture} }
\end{array}
= 
\begin{array}{c}\begin{array}{c}
\scalebox{1}{
\begin{tikzpicture}[xscale=.55,yscale=.55,line width=1.25pt] 
\foreach \i in {1,...,10} 
{ \path (\i,1.5) coordinate (T\i); \path (\i,0) coordinate (B\i); } 
\filldraw[fill= gray!30,draw=gray!30,line width=5pt]  (T1) -- (T10) -- (B10) -- (B1) -- (T1);
\draw[black] (T1) .. controls +(.1,-.4) and +(-.1,-.4) .. (T2) ;
\draw[black] (T2) -- (B1) ;
\draw[black] (T3) .. controls +(.1,-.4) and +(-.1,-.4) .. (T4) ;
\draw[black] (T4) .. controls +(.1,-.5) and +(-.1,-.5) .. (T6) ;
\draw[black] (T6) .. controls +(.1,-1) and +(-.1,1) .. (B2) ;
\draw[black] (T7) .. controls +(.1,-1) and +(-.1,.7) .. (B3) ;
\draw[black] (T8) .. controls +(.1,-.5) and +(-.1,-.5) .. (T10) ;
\draw[black] (T9) .. controls +(.1,-1.5) and +(-.1,1) .. (B4) ;
\foreach \i in {1,...,10} 
{ \fill (T\i) circle (4pt); \fill (B\i) circle (4pt); } 
\end{tikzpicture} }\end{array} = \omega
\\
\begin{array}{c}\scalebox{1}{
\begin{tikzpicture}[xscale=.55,yscale=.55,line width=1.25pt] 
\foreach \i in {1,...,10} 
{ \path (\i,1.5) coordinate (T\i); \path (\i,0) coordinate (B\i); } 
\filldraw[fill= gray!30,draw=gray!30,line width=5pt]  (T1) -- (T10) -- (B10) -- (B1) -- (T1);
\draw[black] (T1)-- (B2) ;
\draw[black] (T2) -- (B4) ;
\draw[black] (T3).. controls +(.2,-1) and +(.2,1) .. (B3) ;
\draw[black] (T4) -- (B1) ;
\draw[black] (T5) -- (B5) ;
\draw[black] (T6) -- (B6) ;
\draw[black] (T7) -- (B7) ;
\draw[black] (T8) -- (B8) ;
\draw[black] (T9) -- (B9) ;
\draw[black] (T10) -- (B10) ;
\foreach \i in {1,...,10} 
{ \fill (T\i) circle (4pt); \fill (B\i) circle (4pt); } 
\end{tikzpicture} }\end{array} = \sigma
\end{array},
\end{align}
and in the following example, which holds in $\B_{10}(n)$,
\begin{align}
d = 
\begin{array}{c}
\scalebox{1}{
\begin{tikzpicture}[xscale=.55,yscale=.55,line width=1.25pt] 
\foreach \i in {1,...,10} 
{ \path (\i,1.5) coordinate (T\i); \path (\i,0) coordinate (B\i); } 
\filldraw[fill= gray!30,draw=gray!30,line width=5pt]  (T1) -- (T10) -- (B10) -- (B1) -- (T1);
\draw[black] (T1) .. controls +(.1,-.4) and +(-.1,-.4) .. (T3) ;
\draw[black] (T2) -- (B2) ;
\draw[black] (T4) .. controls +(.1,-.5) and +(-.1,-.5) .. (T5) ;
\draw[black] (T6) .. controls +(.1,-.6) and +(-.1,.4) .. (B4) ;
\draw[black] (T7) .. controls +(.1,-0.8) and +(-.1,1) .. (B3) ;
\draw[black] (T8) .. controls +(.1,-.5) and +(-.1,-.5) .. (T10) ;
\draw[black] (T9) .. controls +(.1,-1.5) and +(-.1,1) .. (B1) ;
\draw[black] (B5) .. controls +(.1,.5) and +(-.1,.5) .. (B6);
\draw[black] (B7) .. controls +(.1,.5) and +(-.1,.5) .. (B8);
\draw[black] (B9) .. controls +(.1,.5) and +(-.1,.5) .. (B10);
\foreach \i in {1,...,10} 
{ \fill (T\i) circle (4pt); \fill (B\i) circle (4pt); } 
\end{tikzpicture} }
\end{array}
= 
\begin{array}{c}\begin{array}{c}
\scalebox{1}{
\begin{tikzpicture}[xscale=.55,yscale=.55,line width=1.25pt] 
\foreach \i in {1,...,10} 
{ \path (\i,1.5) coordinate (T\i); \path (\i,0) coordinate (B\i); } 
\filldraw[fill= gray!30,draw=gray!30,line width=5pt]  (T1) -- (T10) -- (B10) -- (B1) -- (T1);
\draw[black] (T1) .. controls +(.1,-.4) and +(-.1,-.4) .. (T3) ;
\draw[black] (T2) -- (B1) ;
\draw[black] (T4) .. controls +(.1,-.5) and +(-.1,-.5) .. (T5) ;
\draw[black] (T6) .. controls +(.1,-1) and +(-.1,1) .. (B2) ;
\draw[black] (T7) .. controls +(.1,-1) and +(-.1,.7) .. (B3) ;
\draw[black] (T8) .. controls +(.1,-.5) and +(-.1,-.5) .. (T10) ;
\draw[black] (T9) .. controls +(.1,-1.5) and +(-.1,1) .. (B4) ;
\draw[black] (B5) .. controls +(.1,.5) and +(-.1,.5) .. (B6);
\draw[black] (B7) .. controls +(.1,.5) and +(-.1,.5) .. (B8);
\draw[black] (B9) .. controls +(.1,.5) and +(-.1,.5) .. (B10);
\foreach \i in {1,...,10} 
{ \fill (T\i) circle (4pt); \fill (B\i) circle (4pt); } 
\end{tikzpicture} }\end{array} = \omega
\\
\begin{array}{c}\scalebox{1}{
\begin{tikzpicture}[xscale=.55,yscale=.55,line width=1.25pt] 
\foreach \i in {1,...,10} 
{ \path (\i,1.5) coordinate (T\i); \path (\i,0) coordinate (B\i); } 
\filldraw[fill= gray!30,draw=gray!30,line width=5pt]  (T1) -- (T10) -- (B10) -- (B1) -- (T1);
\draw[black] (T1)-- (B2) ;
\draw[black] (T2) -- (B4) ;
\draw[black] (T3) -- (B3) ;
\draw[black] (T4) .. controls +(0,-1.5) and +(.2,.5) ..  (B1) ;
\draw[black] (T5) -- (B5) ;
\draw[black] (T6) -- (B6) ;
\draw[black] (T7) -- (B7) ;
\draw[black] (T8) -- (B8) ;
\draw[black] (T9) -- (B9) ;
\draw[black] (T10) -- (B10) ;
\foreach \i in {1,...,10} 
{ \fill (T\i) circle (4pt); \fill (B\i) circle (4pt); } 
\end{tikzpicture} }\end{array} = \sigma
\end{array}.
\end{align}
We let $\mathcal{N}_{\calA_k}^m$ denote the set of all noncrossing $m$-factors in $\calA_k$.  The following proposition is proved for the partition algebra in  \cite[Prop.~2.1]{DW} and for the Brauer algebra in \cite[Prop.~2.1]{DWH}. Here we prove it simultaneously for all of the diagram algebras of this paper.

\begin{prop}  If $n \in \CC$ such that $\A_k$ is semisimple, then $\lambda \in \Lambda_{n}^{\A_k}$ with $|\lambda^\ast| = m$, then the set $\{\omega \sigma_t p_{\lambda^\ast} + \J_{m-1} \mid \omega \in \mathcal{N}_{\calA_k}^m, t \in \SYT(\lambda^\ast)\}$ is a $\CC$-basis for the $\A_k$-module $(\A_k \e_\lambda)/\J_{m-1}$.
\end{prop}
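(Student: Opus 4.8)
The plan is to first analyze $\A_k\e_m/\J_{m-1}$ on its own, recognize it as a free right $\CC\Sb_m$-module on the noncrossing $m$-factors, and then right-multiply by the Young symmetrizer $p_{\lambda^\ast}$, invoking Young's natural basis from Section~\ref{subsec:symmetricgroupmodules}. To start, I would show that the $m$-factors form a basis of the image of $\A_k\e_m$ in $\A_k/\J_{m-1}$. For a basis diagram $a\in\calA_k$, the product $a\e_m$ is a scalar multiple of the single diagram obtained from $a$ by isolating (resp.\ pairing, in the $\B_k(n)$ and $\TL_k(n)$ cases) the last $k-m$ bottom vertices; the normalizations in the definition of $\e_m$ make it idempotent, using $\mathfrak{p}_i^2 = n\mathfrak{p}_i$ and $\mathfrak{e}_i^2 = n\mathfrak{e}_i$. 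If $\pn(a\e_m)<m$ then $a\e_m\in\J_{m-1}$; if $\pn(a\e_m)=m$, then since the trailing $k-m$ bottom vertices do not propagate, the first $m$ bottom vertices must lie in distinct propagating blocks, so $a\e_m$ is exactly an $m$-factor. As $d\e_m=d$ for every $m$-factor $d$, the $m$-factors span $\A_k\e_m/\J_{m-1}$; being distinct diagrams of rank exactly $m$, while $\J_{m-1}$ is spanned by diagrams of rank strictly less than $m$, they are linearly independent there and hence form a basis.

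Next I would exploit the unique factorization $d=\omega\sigma$ of an $m$-factor into a noncrossing $m$-factor $\omega\in\mathcal{N}_{\calA_k}^m$ and a permutation $\sigma\in\Sb_m$, where $\Sb_m$ is realized by permutation diagrams on the first $m$ strands. Right multiplication by $\tau\in\Sb_m$ sends $\omega\sigma\mapsto\omega(\sigma\tau)$, and uniqueness of the factorization makes $\A_k\e_m/\J_{m-1}$ a free right $\CC\Sb_m$-module with basis $\mathcal{N}_{\calA_k}^m$. Since $\e_m$ is supported on the last $k-m$ strands while $p_{\lambda^\ast}\in\CC\Sb_m$ acts on the first $m$, they commute and $\e_\lambda=\e_m p_{\lambda^\ast}$, so (using that $\J_{m-1}$ is a right ideal)
\[
(\A_k\e_\lambda)/\J_{m-1}=\bigl((\A_k\e_m)/\J_{m-1}\bigr)p_{\lambda^\ast}=\bigoplus_{\omega\in\mathcal{N}_{\calA_k}^m}\omega\,\bigl(\CC\Sb_m\,p_{\lambda^\ast}\bigr).
\]
By Section~\ref{subsec:symmetricgroupmodules}, $\CC\Sb_m p_{\lambda^\ast}\cong\Sb_m^{\lambda^\ast}$ has Young's natural basis $\{\sigma_t p_{\lambda^\ast}\mid t\in\SYT(\lambda^\ast)\}$, so $\{\omega\sigma_t p_{\lambda^\ast}+\J_{m-1}\}$ spans. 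For independence, for each fixed $\omega$ the map $x\mapsto\omega x$ is injective on $\CC\Sb_m$ (the $\omega\sigma$ are distinct $m$-factors), hence on the subspace $\CC\Sb_m p_{\lambda^\ast}$, so $\{\omega\sigma_t p_{\lambda^\ast}\}_t$ is independent; and elements attached to distinct $\omega$ expand into disjoint sets of $m$-factors, so there are no relations across different $\omega$. This exhibits the claimed set as a basis.

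The hardest part is the first step, carried out uniformly: verifying that $a\e_m$ reduces modulo $\J_{m-1}$ either to zero or to a single $m$-factor, and that $\e_m$ fixes $m$-factors. This is where the loop-counting in concatenation, the precise normalizations of $\e_m$, and the differing conventions for the trailing vertices (isolated versus paired, together with the parity constraint for $\B_k(n)$ and $\TL_k(n)$) must all be reconciled. The references \cite{DW} and \cite{DWH} treat the partition and Brauer cases separately, and the work here is to check that the same diagram bookkeeping goes through verbatim in each of the remaining algebras, so that the free-module argument and the application of the Young symmetrizer are genuinely uniform.
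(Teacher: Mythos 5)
Your proof is correct and follows the same route as the paper's: reduce $d\e_m$ modulo $\J_{m-1}$ to either zero or $n^\ell\omega'\sigma'$ with $\omega'$ a noncrossing $m$-factor, then expand $\sigma' p_{\lambda^\ast}$ in Young's natural basis of $\Sb_m^{\lambda^\ast}$. The only difference is that the paper's proof stops at spanning, whereas you also supply the linear-independence half via the free right $\CC\Sb_m$-module structure on the $m$-factors (distinct rank-$m$ diagrams surviving in $\A_k/\J_{m-1}$), a point the paper leaves implicit, so your argument is if anything more complete.
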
 

\begin{proof}
Let $d \in \calA_k$ and consider the element $d \e_\lambda + \J_{m-1}  = d \e_m p_{\lambda^\ast} + \J_{m-1}$ of the quotient space $(\A_k \e_\lambda)/\J_{m-1}$. Either $\pn(d \e_m) < m$ or $d \e_m = n^\ell d'$, where $d'$ is an $m$-factor and $\ell \in \ZZ_{\ge 0}$. In the latter case, $d \e_m = n^\ell \omega' \sigma'$ for  $\omega' \in \mathcal{N}_{\calA_k}^m$ and $\sigma' \in \Sb_m$. It follows that $d \e_m p_{\lambda^\ast}$ is 0 mod $\J_{m-1}$ or $d \e_m p_{\lambda^\ast} = n^\ell \omega' \sigma' p_{\lambda^\ast}$, and $\sigma' p_{\lambda^\ast}$ is expressible as a linear combination of $\sigma_t p_{\lambda^\ast}$ by the fact that $\{\sigma_t p_{\lambda^\ast} \mid t \in \SYT(\lambda^\ast)\}$ is a basis of the $\Sb_m$-module $\Sb_m^{\lambda^\ast}$. 
\end{proof}

\subsection{Symmetric diagrams}
\label{subsec:symmetricdiagrams}

In this section we reinterpret the basis of the previous section as a basis on symmetric diagrams which is simpler and is  better suited for the combinatorial computations in the remainder of this paper. 

For $d \in \calA_k$,  let $d^T \in \calA_k$ be the diagram obtained by reflecting $d$ over the horizontal axis. We say that a diagram is \emph{symmetric} if $d = d^T$. For example, the following are symmetric diagrams in $\calP_{10}$,
\begin{equation*}
d_1=\!\!\! \begin{array}{c}
\scalebox{0.75}{
\begin{tikzpicture}[scale=.55,line width=1.35pt] 
\foreach \i in {1,...,10}  { \path (\i,1) coordinate (T\i); \path (\i,-1) coordinate (B\i); } 
\filldraw[fill=gray!25,draw=gray!25,line width=4pt]  (T1) -- (T10) -- (B10) -- (B1) -- (T1);
\draw[black] (T1) .. controls +(.1,-.8) and +(-.1,-.8) .. (T3) ;
\draw[black] (T4) .. controls +(.1,-1.1) and +(-.1,-1.1) .. (T8) ;
\draw[black] (T5) .. controls +(.1,-.6) and +(-.1,-.6) .. (T6) ;
\draw[black] (B1) .. controls +(.1,.8) and +(-.1,.8) .. (B3) ;
\draw[black] (B4) .. controls +(.1,1.1) and +(-.1,1.1) .. (B8) ;
\draw[black] (B5) .. controls +(.1,.6) and +(-.1,.6) .. (B6) ;
\draw (T2) -- (B2);
\draw (T7) -- (B10); \draw (B7) -- (T10);
\draw (T9) -- (B9);
\foreach \i in {1,...,10}  { \fill (T\i) circle (4.5pt); \fill (B\i) circle (4.5pt); } 
\end{tikzpicture}}\end{array},\ 
d_2=\!\! \!\begin{array}{c}
\scalebox{0.75}{
\begin{tikzpicture}[scale=.55,line width=1.35pt]
\foreach \i in {1,...,10} 
{ \path (\i,2) coordinate (T\i);
 \path (\i,0) coordinate (B\i);}
\filldraw[fill=gray!25,draw=gray!25,line width=4pt]  (T1) -- (T10) -- (B10) -- (B1) -- cycle;
\draw (T3) .. controls +(.1,-.8) and +(-.1,-.8) .. (T5);
\draw (B3) .. controls +(.1,.8) and +(-.1,.8) .. (B5);
\draw (T5) .. controls +(.1,-.6) and +(-.1,-.6) .. (T6);
\draw (B5) .. controls +(.1,.6) and +(-.1,.6) .. (B6);
\draw (T1) .. controls +(.1,-.6) and +(-.1,-.6) .. (T2);
\draw (B1) .. controls +(.1,.6) and +(-.1,.6) .. (B2);
\draw (T8) .. controls +(.1,-.6) and +(-.1,-.6) .. (T9);
\draw (B8) .. controls +(.1,.6) and +(-.1,.6) .. (B9);
\draw (T9) .. controls +(.1,-.6) and +(-.1,-.6) .. (T10);
\draw (B9) .. controls +(.1,.6) and +(-.1,.6) .. (B10);
\draw (T2) -- (B2);
\draw (T4) -- (B4);
\draw (T6) -- (B8); \draw (T8) -- (B6);
\foreach \i in {1,...,10} { \fill (T\i) circle (4.5pt);   \fill (B\i) circle (4.5pt);} 
\end{tikzpicture}} 
\end{array},  \ 
d_3=\!\!\!\begin{array}{c}
\scalebox{0.75}{
\begin{tikzpicture}[scale=.55,line width=1.35pt]
\foreach \i in {1,...,10} 
{ \path (\i,2) coordinate (T\i);
 \path (\i,0) coordinate (B\i);}
\filldraw[fill=gray!25,draw=gray!25,line width=4pt]  (T1) -- (T10) -- (B10) -- (B1) -- cycle;
\draw (T3) .. controls +(.1,-.8) and +(-.1,-.8) .. (T5);
\draw (B3) .. controls +(.1,.8) and +(-.1,.8) .. (B5);
\draw (T5) .. controls +(.1,-.6) and +(-.1,-.6) .. (T6);
\draw (B5) .. controls +(.1,.6) and +(-.1,.6) .. (B6);
\draw (T1) .. controls +(.1,-.6) and +(-.1,-.6) .. (T2);
\draw (B1) .. controls +(.1,.6) and +(-.1,.6) .. (B2);
\draw (T8) .. controls +(.1,-.6) and +(-.1,-.6) .. (T9);
\draw (B8) .. controls +(.1,.6) and +(-.1,.6) .. (B9);
\draw (T9) .. controls +(.1,-.6) and +(-.1,-.6) .. (T10);
\draw (B9) .. controls +(.1,.6) and +(-.1,.6) .. (B10);
\draw (T2) -- (B2);
\draw (T4) -- (B4);
\draw (T6) -- (B6); 
\draw (T10) -- (B10);
\foreach \i in {1,...,10} { \fill (T\i) circle (4.5pt);   \fill (B\i) circle (4.5pt);} 
\end{tikzpicture}} 
\end{array}.
\end{equation*}
For a symmetric diagram $d = d^T$, let $\pi(d)$ and  $\pi'(d)$ denote the propagating blocks in the top and bottom rows of $d$, respectively.  In the examples above, $\pi(d_1) = \{ 2 \mid 7 \mid 9 \mid 10\}$ and $\pi(d_2) = \pi(d_3) = \{ 1,2 \mid 4 \mid 3,5,6 \mid 8,9,10 \}$, and observe that in a symmetric diagram $\pi'(d)$ is always equal to $\pi(d)$ with the vertices primed. 
\begin{definition}\label{def:symmetricmdiagram} 
A diagram $d \in \calA_k$ is a \emph{symmetric $m$-diagram} if (1) $d$ is symmetric; (2) $\pn(d) = m$; and (3) each of the $m$ propagating blocks in $\pi(d)$ is connected to its mirror image in $\pi'(d)$. 
\end{definition}
In the examples above, $d_3$ is a symmetric 4-diagram, but $d_1$ and $d_2$ are not since they each have  a propagating block not connected to its mirror image. For any of the diagram algebras $\A_k$, let
\begin{equation}
\Wm = \left\{ d \in \calA_k \ \middle| \ \text{$d$ is a symmetric $m$-diagram} \right\}.
\end{equation}
There is a simple bijection between the noncrossing $m$-factors of the previous section and the symmetric $m$-diagrams of this section. This is seen by the fact that both types of diagrams are completely determined by the set partition on their top row and by knowing which blocks propagate. We simply pair the diagrams with the same top row and propagating edges. For example,  the noncrossing $m$-factor of \eqref{eg:mfactor} is paired with a symmetric $m$-diagram as follows,
\begin{equation}\label{bijection:factors}
\begin{array}{ccc}
\begin{array}{c}
\scalebox{1}{
\begin{tikzpicture}[xscale=.55,yscale=.55,line width=1.25pt] 
\foreach \i in {1,...,10} 
{ \path (\i,1.5) coordinate (T\i); \path (\i,0) coordinate (B\i); } 
\filldraw[fill= gray!30,draw=gray!30,line width=5pt]  (T1) -- (T10) -- (B10) -- (B1) -- (T1);
\draw[black] (T1) .. controls +(.1,-.4) and +(-.1,-.4) .. (T2) ;
\draw[black] (T2) -- (B1) ;
\draw[black] (T3) .. controls +(.1,-.4) and +(-.1,-.4) .. (T4) ;
\draw[black] (T4) .. controls +(.1,-.5) and +(-.1,-.5) .. (T6) ;
\draw[black] (T6) .. controls +(.1,-1) and +(-.1,1) .. (B2) ;
\draw[black] (T7) .. controls +(.1,-1) and +(-.1,.7) .. (B3) ;
\draw[black] (T8) .. controls +(.1,-.5) and +(-.1,-.5) .. (T10) ;
\draw[black] (T9) .. controls +(.1,-1.5) and +(-.1,1) .. (B4) ;
\foreach \i in {1,...,10} 
{ \fill (T\i) circle (4pt); \fill (B\i) circle (4pt); } 
\end{tikzpicture} }\end{array}
&\longleftrightarrow&
\begin{array}{c}
\scalebox{1}{
\begin{tikzpicture}[xscale=.55,yscale=.55,line width=1.25pt] 
\foreach \i in {1,...,10} 
{ \path (\i,1.5) coordinate (T\i); \path (\i,0) coordinate (B\i); } 
\filldraw[fill= gray!30,draw=gray!30,line width=5pt]  (T1) -- (T10) -- (B10) -- (B1) -- (T1);
\draw[black] (T1) .. controls +(.1,-.4) and +(-.1,-.4) .. (T2) ;
\draw[black] (B1) .. controls +(.1,.5) and +(-.1,.5) .. (B2);
\draw[black] (T2) -- (B2) ;
\draw[black] (T3) .. controls +(.1,-.4) and +(-.1,-.4) .. (T4) ;
\draw[black] (B3) .. controls +(.1,.5) and +(-.1,.5) .. (B4);
\draw[black] (T4) .. controls +(.1,-.5) and +(-.1,-.5) .. (T6) ;
\draw[black] (B4) .. controls +(.1,.5) and +(-.1,.5) .. (B6);
\draw[black] (T6) -- (B6) ;
\draw[black] (T7) -- (B7) ;
\draw[black] (T8) .. controls +(.1,-.5) and +(-.1,-.5) .. (T10) ;
\draw[black] (B8) .. controls +(.1,.5) and +(-.1,.5) .. (B10);
\draw[black] (T9) -- (B9) ;
\foreach \i in {1,...,10} 
{ \fill (T\i) circle (4pt); \fill (B\i) circle (4pt); } 
\end{tikzpicture} }
\end{array}.\\
\text{noncrossing 4-factor} && \text{symmetric 4-diagram} \\
\end{array}
\end{equation}
A simple counting argument can be used to determine the number of symmetric $m$-diagrams $|\Wm|$, which equals the number of noncrossing $m$-factors   $|\mathcal{N}_{\calA_k}^m|,$ for each diagram algebra $\A_k$:
\begin{equation*}
\begin{array}{cl}
\A_k &  |\Wm| = |\mathcal{N}_{\calA_k}^m| \phantom{\Big{|}} \\ \hline
\Pb_k(n) & \sum_t  \stirling{k}{t}\binom{t}{m} \phantom{\Big{|}}  \\
\B_k(n) & \binom{k}{m}(k-m-1)!!  \phantom{\Big{|}} \\
\RB_k(n) & \sum_t \binom{k}{m}\binom{k-m}{2t}(2t-1)!! \phantom{\Big{|}} \\
\end{array}
\hskip.5in
\begin{array}{cl}
\A_k &  |\Wm|= |\mathcal{N}_{\calA_k}^m|  \phantom{\Big{|}} \\ \hline
\TL_k(n) & \binom{k}{\frac{k-m}{2}}-\binom{k}{\frac{k-m}{2}-1} \phantom{\Big{|}}  \\
\Motz_k(n) &  \sum_t \binom{k}{m+2t}\big(\binom{m+2t}{t}-\binom{m+2t}{t-1}\big) \phantom{\Big{|}} \\
\R_k, \PR_k & \binom{k}{m}  \phantom{\Big{|}} \\
\end{array}
\end{equation*}
The corresponding integer triangles can be found in \cite{OEIS} A049020, A008313, A096713, A064189, A111062, and A007318, respectively. In the case of the planar algebras, the symmetric $m$-diagrams are exactly equal to the rank-$m$ symmetric diagrams used in the Gelfand models in \cite{HReeks} and \cite{KM-model}, and in the case of the non-planar algebras, the symmetric $m$-diagrams are a subset of the rank-$m$ symmetric diagrams. Below are examples from these algebras.
\begin{equation*}
\begin{array}{l c l}
\phantom{=  \calW^{\mathcal{PR}_{10}}_5} \begin{array}{c}
\scalebox{0.75}{
\begin{tikzpicture}[scale=.55,line width=1.35pt]
\foreach \i in {1,...,10} 
{ \path (\i,2) coordinate (T\i);
 \path (\i,0) coordinate (B\i);}
\filldraw[fill=gray!25,draw=gray!25,line width=4pt]  (T1) -- (T10) -- (B10) -- (B1) -- cycle;
\draw (T3) .. controls +(.1,-.8) and +(-.1,-.8) .. (T5);
\draw (B3) .. controls +(.1,.8) and +(-.1,.8) .. (B5);
\draw (T5) .. controls +(.1,-.6) and +(-.1,-.6) .. (T6);
\draw (B5) .. controls +(.1,.6) and +(-.1,.6) .. (B6);
\draw (T1) .. controls +(.1,-.6) and +(-.1,-.6) .. (T2);
\draw (B1) .. controls +(.1,.6) and +(-.1,.6) .. (B2);
\draw (T8) .. controls +(.1,-.6) and +(-.1,-.6) .. (T9);
\draw (B8) .. controls +(.1,.6) and +(-.1,.6) .. (B9);
\draw (T9) .. controls +(.1,-.6) and +(-.1,-.6) .. (T10);
\draw (B9) .. controls +(.1,.6) and +(-.1,.6) .. (B10);
\draw (T2) -- (B2);
\draw (T4) -- (B4);
\draw (T6) -- (B6); 
\draw (T10) -- (B10);
\foreach \i in {1,...,10} { \fill (T\i) circle (4.5pt);   \fill (B\i) circle (4.5pt);} 
\end{tikzpicture}} 
\end{array}
 \in \calW^4_{\!\calP_{10}}
&  &  
 \begin{array}{c}
\scalebox{0.75}{
\begin{tikzpicture}[scale=.55,line width=1.35pt] 
\foreach \i in {1,...,10}  { \path (\i,1) coordinate (T\i); \path (\i,-1) coordinate (B\i); } 
\filldraw[fill=gray!25,draw=gray!25,line width=4pt]  (T1) -- (T10) -- (B10) -- (B1) -- (T1);
\draw[black] (T2) .. controls +(.1,-.6) and +(-.1,-.6) .. (T3) ;
\draw[black] (B2) .. controls +(.1,.6) and +(-.1,.6) .. (B3) ;
\draw[black] (T1) .. controls +(.1,-1) and +(-.1,-1) .. (T4) ;
\draw[black] (B1) .. controls +(.1,1) and +(-.1,1) .. (B4) ;
\draw[black] (T8) .. controls +(.1,-.6) and +(-.1,-.6) .. (T9) ;
\draw[black] (B8) .. controls +(.1,.6) and +(-.1,.6) .. (B9) ;
\foreach \i in {5,6,7,10}{\draw (T\i) -- (B\i);}
\foreach \i in {1,...,10}  { \fill (T\i) circle (4.5pt); \fill (B\i) circle (4.5pt); } 
\end{tikzpicture}}\end{array}
 \in \calW^4_{\!\mathcal{TL}_{10}}
 \\
\phantom{=  \calW^{\mathcal{PR}_{10}}_5} \begin{array}{c}
\scalebox{0.75}{
\begin{tikzpicture}[scale=.55,line width=1.35pt] 
\foreach \i in {1,...,10}  { \path (\i,1) coordinate (T\i); \path (\i,-1) coordinate (B\i); } 
\filldraw[fill=gray!25,draw=gray!25,line width=4pt]  (T1) -- (T10) -- (B10) -- (B1) -- (T1);
\draw[black] (T1) .. controls +(.1,-.8) and +(-.1,-.8) .. (T3) ;
\draw[black] (B1) .. controls +(.1,.8) and +(-.1,.8) .. (B3) ;
\draw[black] (T4) .. controls +(.1,-1) and +(-.1,-1) .. (T8) ;
\draw[black] (B4) .. controls +(.1,1) and +(-.1,1) .. (B8) ;
\draw[black] (T5) .. controls +(.1,-.6) and +(-.1,-.6) .. (T6) ;
\draw[black] (B5) .. controls +(.1,.6) and +(-.1,.6) .. (B6) ;
\foreach \i in {2,7,9,10}{\draw (T\i) -- (B\i);}
\foreach \i in {1,...,10}  { \fill (T\i) circle (4.5pt); \fill (B\i) circle (4.5pt); } 
\end{tikzpicture}}\end{array}
 \in \calW^4_{\!\mathcal{B}_{10}} &  & 
\begin{array}{c}
\scalebox{0.75}{
\begin{tikzpicture}[scale=.55,line width=1.35pt] 
\foreach \i in {1,...,10}  { \path (\i,1) coordinate (T\i); \path (\i,-1) coordinate (B\i); } 
\filldraw[fill=gray!25,draw=gray!25,line width=4pt]  (T1) -- (T10) -- (B10) -- (B1) -- (T1);
\draw[black] (T3) .. controls +(.1,-.6) and +(-.1,-.6) .. (T4) ;
\draw[black] (B3) .. controls +(.1,.6) and +(-.1,.6) .. (B4) ;
\draw[black] (T7) .. controls +(.1,-1) and +(-.1,-1) .. (T10) ;
\draw[black] (B7) .. controls +(.1,1) and +(-.1,1) .. (B10) ;
\foreach \i in {1,2,6}{\draw (T\i) -- (B\i);}
\foreach \i in {1,...,10}  { \fill (T\i) circle (4.5pt); \fill (B\i) circle (4.5pt); } 
\end{tikzpicture}}\end{array}
 \in \calW^3_{\!\mathcal{M}_{10}}  \\
\phantom{=  \calW^{\mathcal{PR}_{10}}_5} \begin{array}{c}
\scalebox{0.75}{
\begin{tikzpicture}[scale=.55,line width=1.35pt] 
\foreach \i in {1,...,10}  { \path (\i,1) coordinate (T\i); \path (\i,-1) coordinate (B\i); } 
\filldraw[fill=gray!25,draw=gray!25,line width=4pt]  (T1) -- (T10) -- (B10) -- (B1) -- (T1);
\draw[black] (T1) .. controls +(.1,-1) and +(-.1,-1) .. (T4) ;
\draw[black] (B1) .. controls +(.1,1) and +(-.1,1) .. (B4) ;
\draw[black] (T10) .. controls +(.1,-.8) and +(-.1,-.8) .. (T8) ;
\draw[black] (B10) .. controls +(.1,.8) and +(-.1,.8) .. (B8) ;
\foreach \i in {3,7,9}{\draw (T\i) -- (B\i);}
\foreach \i in {1,...,10}  { \fill (T\i) circle (4.5pt); \fill (B\i) circle (4.5pt); } 
\end{tikzpicture}}\end{array}
 \in \calW^3_{\!\mathcal{RB}_{10}}   &  & 
 \begin{array}{c}
\scalebox{0.75}{
\begin{tikzpicture}[scale=.55,line width=1.35pt] 
\foreach \i in {1,...,10}  { \path (\i,1) coordinate (T\i); \path (\i,-1) coordinate (B\i); } 
\filldraw[fill=gray!25,draw=gray!25,line width=4pt]  (T1) -- (T10) -- (B10) -- (B1) -- (T1);
\foreach \i in {1,2,6,8,10}{\draw (T\i) -- (B\i);}
\foreach \i in {1,...,10}  { \fill (T\i) circle (4.5pt); \fill (B\i) circle (4.5pt); } 
\end{tikzpicture}}\end{array}
 \in \calW^5_{\!\mathcal{R}_{10}} =  \calW^5_{\!\mathcal{PR}_{10}}
\end{array}
\end{equation*}

For $d,w \in \calA_k$, we say that $d \circ w \circ d^T$ is the \emph{conjugate} of $w$ by $d$. For example, below is the  conjugation $d \circ w \circ d^T$ of diagrams $d \in \calP_{13}$ and $w \in \calW^5_{\!\calP_{13}}$,
\begin{equation} \label{eqn:conjugation}
 \begin{array}{cc}
d \!\!\!\! & = \!\!
\begin{array}{c}
\scalebox{0.9}{
\begin{tikzpicture}[scale=.55,line width=1.35pt]
\foreach \i in {1,...,13} 
{ \path (\i,2) coordinate (T\i);
 \path (\i,0) coordinate (B\i);}
\draw[rounded corners=.15mm, fill=gray!25,draw=gray!25,line width=4pt]  (T1) -- (T13) -- (B13) -- (B1) -- cycle;
\draw (T5) .. controls +(.1,-.5) and +(-.1,-.5) .. (T6);
\draw (T6) .. controls +(.1,-.5) and +(-.1,-.5) .. (T7);
\draw (T8) .. controls +(.1,-1.2) and +(-.1,-1.2) .. (T12);
\draw (T10) .. controls +(.1,-.5) and +(-.1,-.5) .. (T11);
\draw (T7) .. controls +(.1,-.5) and +(-.1,.5) .. (B8);
\draw (T1) .. controls +(.1,-.8) and +(-.1,.8) .. (B5);
\draw (T8) .. controls +(.1,-.8) and +(-.1,.8) .. (B4);
\draw (T9) .. controls +(.1,-.7) and +(-.1,.7) .. (B12);
\draw (T2) -- (B2);
\draw (T3) -- (B3);
\draw (T13) -- (B13);
\draw (B1) .. controls +(.1,.7) and +(-.1,.7) .. (B3);
\draw (B9) .. controls +(.1,.5) and +(-.1,.5) .. (B10);
\foreach \i in {1,...,13} { \fill (T\i) circle (4.5pt);  \fill (B\i) circle (4.5pt);} 
\end{tikzpicture} } 
\end{array} \\
w \!\!\!\! & = \!\!
\begin{array}{c}
\scalebox{0.9}{
\begin{tikzpicture}[scale=.55,line width=1.35pt]
\foreach \i in {1,...,13} 
{ \path (\i,2) coordinate (T\i);
 \path (\i,0) coordinate (B\i);}
\draw[rounded corners=.15mm, fill= gray!25!blue!10,draw=gray!25!blue!10,line width=4pt]  (T1) -- (T13) -- (B13) -- (B1) -- cycle;
\draw (T3) .. controls +(.1,-.7) and +(-.1,-.7) .. (T5);
\draw (B3) .. controls +(.1,.7) and +(-.1,.7) .. (B5);
\draw (T5) .. controls +(.1,-.5) and +(-.1,-.5) .. (T6);
\draw (B5) .. controls +(.1,.5) and +(-.1,.5) .. (B6);
\draw[blue!30!magenta] (T1) .. controls +(.1,-.5) and +(-.1,-.5) .. (T2);
\path (1.75,1) node {$\scriptstyle{1}$};
\draw[blue!30!magenta] (B1) .. controls +(.1,.5) and +(-.1,.5) .. (B2);
\draw[purple] (T8) .. controls +(.1,-.5) and +(-.1,-.5) .. (T9);
\draw[purple] (B8) .. controls +(.1,.5) and +(-.1,.5) .. (B9);
\draw[purple] (T9) .. controls +(.1,-.5) and +(-.1,-.5) .. (T10);
\draw[purple] (B9) .. controls +(.1,.5) and +(-.1,.5) .. (B10);
\draw[cyan] (T7) .. controls +(.1,-1) and +(-.1,-.8) .. (T13);
\draw[cyan] (B7) .. controls +(.1,1) and +(-.1,.8) .. (B13);
\draw[blue!30!magenta] (T2) -- (B2);
\draw[blue!30!green] (T4) -- (B4);
\path (3.75,1) node {$\scriptstyle{2}$};
\draw[purple] (T10) -- (B10);
\path (9.75,1) node {$\scriptstyle{3}$};
\draw[orange] (T12) -- (B12);
\path (11.75,1) node {$\scriptstyle{4}$};
\draw[cyan] (T13) -- (B13);
\path (12.75,1) node {$\scriptstyle{5}$};
\foreach \i in {1,...,13} { \fill (T\i) circle (4.5pt);   \fill (B\i) circle (4.5pt);} 
\end{tikzpicture} } 
\end{array} \\
d^T \!\!\!\! & = \!\!
\begin{array}{c}
\scalebox{0.9}{
\begin{tikzpicture}[scale=.55,line width=1.35pt]
\foreach \i in {1,...,13} 
{ \path (\i,2) coordinate (B\i);
 \path (\i,0) coordinate (T\i);}
\draw[rounded corners=.15mm, fill=gray!25,draw=gray!25,line width=4pt]  (T1) -- (T13) -- (B13) -- (B1) -- cycle;
\draw (T5) .. controls +(.1,.5) and +(-.1,.5) .. (T6);
\draw (T6) .. controls +(.1,.5) and +(-.1,.5) .. (T7);
\draw (T8) .. controls +(.1,1.2) and +(-.1,1.2) .. (T12);
\draw (T10) .. controls +(.1,.5) and +(-.1,.5) .. (T11);
\draw (T7) .. controls +(.1,.5) and +(-.1,-.5) .. (B8);
\draw (T1) .. controls +(.1,.8) and +(-.1,-.8) .. (B5);
\draw (T8) .. controls +(.1,.8) and +(-.1,-.8) .. (B4);
\draw (T9) .. controls +(.1,.7) and +(-.1,-.7) .. (B12);
\draw (T2) -- (B2);
\draw (T3) -- (B3);
\draw (T13) -- (B13);
\draw (B1) .. controls +(.1,-.7) and +(-.1,-.7) .. (B3);
\draw (B9) .. controls +(.1,-.5) and +(-.1,-.5) .. (B10);
\foreach \i in {1,...,13} { \fill (T\i) circle (4.5pt);  \fill (B\i) circle (4.5pt);} 
\end{tikzpicture} } 
\end{array}
\end{array}
\!\!\!\!\!\! =  \!\! \begin{array}{c}
\scalebox{0.9}{
\begin{tikzpicture}[scale=.55,line width=1.35pt]
\foreach \i in {1,...,13} 
{ \path (\i,2) coordinate (T\i);
 \path (\i,0) coordinate (B\i);}
\draw[rounded corners=.15mm, fill= gray!25!blue!10,draw=gray!25!blue!10,line width=4pt]  (T1) -- (T13) -- (B13) -- (B1) -- cycle;
\draw (T10) .. controls +(.1,-.5) and +(-.1,-.5) .. (T11);
\draw (B10) .. controls +(.1,.5) and +(-.1,.5) .. (B11);
\draw[blue!30!magenta] (T1) .. controls +(.1,-.5) and +(-.1,-.5) .. (T2);
\draw[blue!30!magenta] (B1) .. controls +(.1,.5) and +(-.1,.5) .. (B2);
\draw[blue!30!magenta] (T2) .. controls +(.1,-.5) and +(-.1,-.5) .. (T3);
\draw[blue!30!magenta] (B2) .. controls +(.1,.5) and +(-.1,.5) .. (B3);
\draw[purple] (T5) .. controls +(.1,-.5) and +(-.1,-.5) .. (T6);
\draw[purple] (B5) .. controls +(.1,.5) and +(-.1,.5) .. (B6);
\draw[purple] (T6) .. controls +(.1,-.5) and +(-.1,-.5) .. (T7);
\draw[purple] (B6) .. controls +(.1,.5) and +(-.1,.5) .. (B7);
\draw[blue!30!green] (T8) .. controls +(.1,-.8) and +(-.1,-1) .. (T12);
\draw[blue!30!green] (B8) .. controls +(.1,.8) and +(-.1,1) .. (B12);
\draw[blue!30!magenta] (T3) -- (B3);
\path (2.75,1) node {$\scriptstyle{1}$};
\draw[purple] (T7) -- (B7);
\path (6.75,1) node {$\scriptstyle{2}$};
\draw[orange] (T9) -- (B9);
\path (8.75,1) node {$\scriptstyle{3}$};
\draw[blue!30!green] (T12) -- (B12);
\path (11.75,1) node {$\scriptstyle{4}$};
\draw[cyan] (T13) -- (B13);
\path (12.75,1) node {$\scriptstyle{5}$};
\foreach \i in {1,...,13} { \fill (T\i) circle (4.5pt); \fill (B\i) circle (4.5pt);} 
\end{tikzpicture} }
\end{array}\!\!\!\!  =  d\circ w \circ d^T.
\end{equation}
We order the $m$ propagating blocks of a symmetric $m$-diagram according to their maximum entry. So, for example, we order the blocks in $\pi(w) = \{ 1,2 \mid 4 \mid 8,9,10 \mid 12 \mid 7,13 \}$ as follows:  $\{ 1,2\} < \{ 4 \} < \{8,9,10\} < \{12\} < \{7,13\}$. We refer to this as \emph{max-entry order}. Furthermore, by convention, \emph{we always draw the propagating edges in a symmetric $m$-diagram as identity edges connecting the maximum entries in the blocks}. Upon conjugating a symmetric $m$-diagram $w$ by $d \in \calP_k$,  if $\pn(d\circ w\circ d^T) =m$, then the propagating blocks of $w$ have been permuted, and we let $\sigma_{d,w}\in \Sb_m$ be the permutation of the fixed blocks, so that (in max-entry order),
\begin{equation}\label{def:twist}
\text{\it the $i$th propagating block in $w$ gets sent to the $\sigma_{d,w}(i)$th propagating block in $d\circ w \circ d^T$.}
\end{equation}
 We refer to $\sigma_{d,w}$ as the \emph{twist} of the conjugation of $w$ by $d$. For example in~\eqref{eqn:conjugation} $\sigma_{d,w}$ is the three-cycle $(4,3,2)$.
 
 \begin{rem} \label{rem:dddT} The following properties can be verified through simple diagram calculus for $d\in \calP_k$.
 \begin{enumerate}
\item[(1)] If $w$ is a symmetric $m$-diagram, then $d \circ w \circ d^T$ is a symmetric $m'$-diagram with $m' = \pn(d \circ w \circ d^T) \leq \pn(w) = m$.
\item[(2)] If $d = d^T$ then $d\circ d \circ d^T= d$.
\end{enumerate}
\end{rem}

\subsection{Irreducible modules $\A^\lambda_k$}

For any of the diagram algebras $\A_k$, let
\begin{equation}
\WWm := \CC\Wm = \CC\text{-span}\left\{ d \in \calA_k \ \middle| \ \text{$d$ is a symmetric $m$-diagram} \right\}.
\end{equation}
For $\lambda \in \Lambda_{n}^{\A_k}$ with $|\lambda^\ast| = m$, let $\A^\lambda_k$  be the vector space
\begin{equation}
\A^\lambda_{k} := \WWm \ot \Sb^{\lambda^\ast}_m =  \CC\text{-span}\left\{ w \ot \n_t \ \middle| \ w \in \Wm, \ t \in \SYT(\lambda^\ast) \right\}, 
\end{equation}
where $\n_t$ is a natural basis element of $\Sb^{\lambda^\ast}_m$ (see Section \ref{subsec:symmetricgroupmodules}).
If $w \in \Wm$ and $t \in \SYT(\lambda^\ast)$, we define the action of $d \in \calA_k$ on the basis element $w \ot \n_t$ to be
\begin{equation} 
d\cdot (w \ot \n_t) = \begin{cases}
n^{\ell(d,w)} (d\circ w \circ d^T) \ot \sigma_{d,w} \cdot \n_t  & \text{if $\pn(d\circ w\circ d^T) = m$, } \\
0 & \text{if $\pn(d\circ w\circ d^T) < m$, }
\end{cases} \label{eqn:twistedaction}
\end{equation}
where $\ell(d,w)$ is the number of connected components removed from the middle row during the diagram concatenation $d\circ w$ and  $\sigma_{d,w} \in \Sb_m$ is the twist of the conjugation of $w$ by $d$ defined in~\eqref{def:twist}; that is, $\sigma_{d,w}$ is the permutation on the propagating blocks of $w$ induced by $d$.

The bijection \eqref{bijection:factors} between symmetric $m$-diagrams and noncrossing $m$-factors gives rise to the following isomorphism.

\begin{prop} \label{prop:twistedaction} If $n \in \CC$ such that $\A_k$ is semisimple and $\lambda \in \Lambda_n^{\A_k}$, then $\A^\lambda_k$ and $(\A_k \e_\lambda)/\J_{m-1}$ are isomorphic as $\A_k$-modules. 
\end{prop}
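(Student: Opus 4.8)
The plan is to write down the obvious candidate map on bases and then verify it intertwines the two $\A_k$-actions, reducing everything to the single diagrammatic identity $w=\omega\circ\omega^T$ relating a symmetric $m$-diagram to its noncrossing $m$-factor.

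First I would define $\phi\colon \A^\lambda_k \to (\A_k\e_\lambda)/\J_{m-1}$ on the basis of $\A^\lambda_k$ by $\phi(w\ot\n_t)=\omega\,\n_t+\J_{m-1}$, where $\omega\in\mathcal{N}^m_{\calA_k}$ is the noncrossing $m$-factor paired with $w$ under the bijection \eqref{bijection:factors} and $\n_t=\sigma_t p_{\lambda^\ast}$. Since $\e_m$ commutes with $\CC\Sb_m$ and $\omega\e_m=\omega$ (the trailing vertices of $\omega$ meet the matching part of $\e_m$, the deleted middle components being cancelled by the normalization of $\e_m$), one has $\omega\,\n_t=\omega\sigma_t p_{\lambda^\ast}=(\omega\sigma_t)\e_\lambda\in\A_k\e_\lambda$. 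Thus $\phi$ sends the basis $\{w\ot\n_t\}$ bijectively onto the basis $\{\omega\sigma_t p_{\lambda^\ast}+\J_{m-1}\}$ of $(\A_k\e_\lambda)/\J_{m-1}$ from the preceding proposition, so $\phi$ is a linear isomorphism and it remains only to show it is a map of $\A_k$-modules.

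Second, I would record the diagram calculus needed. The factor $\omega$ and its symmetric diagram $w$ share the same top-row set partition, and $w=\omega\circ\omega^T$; hence $d\circ w\circ d^T=(d\circ\omega)\circ(d\circ\omega)^T$ for all $d\in\calA_k$. Because the concatenation $d\circ(\,\cdot\,)$ and the components it deletes depend only on the top row of its second argument, $\ell(d,\omega)=\ell(d,w)$ and the top row of $d\circ\omega$ equals that of $d\circ w\circ d^T$. If $\pn(d\circ\omega)=m$, write its $m$-factor decomposition $d\circ\omega=\omega'\,\tau$ with $\omega'\in\mathcal{N}^m_{\calA_k}$ and $\tau\in\Sb_m$; then $(d\circ\omega)\circ(d\circ\omega)^T=\omega'\circ(\tau\circ\tau^T)\circ\omega'^T=\omega'\circ\omega'^T$, so $d\circ w\circ d^T$ is precisely the symmetric $m$-diagram paired with $\omega'$, and in particular $\pn(d\circ\omega)=m$ if and only if $\pn(d\circ w\circ d^T)=m$. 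The essential point, which I would verify by tracking blocks in max-entry order, is that $\tau$ equals the twist $\sigma_{d,w}$ of \eqref{def:twist}.

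Finally, I would check the intertwining. For $d\in\calA_k$ we have $d\cdot\phi(w\ot\n_t)=(d\omega)\n_t+\J_{m-1}=n^{\ell(d,\omega)}(d\circ\omega)\n_t+\J_{m-1}$. If $\pn(d\circ\omega)<m$ then $d\circ\omega\in\J_{m-1}$, which is a two-sided ideal, so this is $0$, matching $\phi(d\cdot(w\ot\n_t))=0$ by the rank equivalence above. If $\pn(d\circ\omega)=m$, substituting $d\circ\omega=\omega'\sigma_{d,w}$ gives $n^{\ell(d,w)}\omega'(\sigma_{d,w}\n_t)+\J_{m-1}=n^{\ell(d,w)}\omega'\,\n_{\sigma_{d,w}(t)}+\J_{m-1}$, which is exactly $\phi\big(n^{\ell(d,w)}(d\circ w\circ d^T)\ot\sigma_{d,w}\cdot\n_t\big)=\phi(d\cdot(w\ot\n_t))$; note that the $\Sb_m$-straightening of $\n_{\sigma_{d,w}(t)}$ into the natural basis is carried out identically on both sides, since each reduces to multiplication by $\sigma_{d,w}$ inside $\Sb_m^{\lambda^\ast}$. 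Hence $\phi$ is an $\A_k$-module homomorphism, and being a bijection it is an isomorphism. I expect the main obstacle to be the combinatorial identification $\tau=\sigma_{d,w}$: both permutations describe how the $m$ propagating blocks are rearranged, but one arises from the $m$-factor normal form of the product $d\circ\omega$ and the other from conjugation in the symmetric-diagram picture, so reconciling them requires careful bookkeeping of the max-entry ordering through the concatenation, together with the parallel verification that loop removals agree, giving $\ell(d,\omega)=\ell(d,w)$.
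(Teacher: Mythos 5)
Your proposal is correct and follows essentially the same route as the paper's proof: you define the same basis identification $w\ot\n_t\leftrightarrow\omega\sigma_t p_{\lambda^\ast}+\J_{m-1}$ via the bijection \eqref{bijection:factors} and verify equivariance through the decomposition $d\circ\omega=\omega'\sigma_{d,w}$, with the rank conditions matching the vanishing conditions on both sides. The extra diagram calculus you supply (the identity $w=\omega\circ\omega^T$, hence $d\circ w\circ d^T=\omega'\circ\omega'^T$, and the equality $\ell(d,\omega)=\ell(d,w)$) is a correct and slightly more explicit justification of what the paper asserts directly, including the key identification $\tau=\sigma_{d,w}$.
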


\begin{proof} Let $\omega \in \mathcal{N}^m_{\calA_k}$ be a noncrossing $m$-factor that is in bijection with the symmetric $m$-diagram $w \in \mathcal{W}^m_{\calA_k}$ via \eqref{bijection:factors}. For any $t \in \SYT(\lambda^\ast)$  identify the basis element $\omega \sigma_t p_\lambda + \J_{m-1}$ of  $(\A_k \e_\lambda)/\J_{m-1}$ with the basis element $w \otimes \n_t$ of $\A_k^\lambda$, and extend this identification linearly to a vector space isomorphism.  That it is also an algebra homomorphism comes from the fact that the action in \eqref{eqn:twistedaction} is simply a combinatorial realization of diagram multiplication in the quotient space $(\A_k \e_\lambda)/\J_{m-1}$.  To see this, consider the action of 
$d \in {\calA_k}$ on each basis vector. If $\pn(d \omega) = m$, then $d \omega = n^{\ell(d,\omega)} \omega' \sigma$ for $\omega' \in \mathcal{N}^m_{\calA_k}$ such that $\omega'$ is in bijection with $d \circ w \circ d^T \in \mathcal{W}^m_{\calA_k}$. Moreover, the permutation   $\sigma \in \Sb_m$ which uncrosses $d \omega$ is, by definition of $\sigma_{d,w}$, the same as the permutation $\sigma_{d,w} \in \Sb_m$ of the fixed blocks of $w$. Thus,   $d \omega \sigma_t p_\lambda + \J_{m-1} = n^{\ell(d,\omega)} \omega' \sigma \sigma_t p_\lambda + \J_{m-1}$ if and only if $d \cdot (w \otimes \n_t) = n^{\ell(d,w)} (d\circ w \circ d^T) \ot \sigma_{d,w} \cdot \n_t$.

Finally,  if $\pn(d \omega) < m$, then $d \omega \sigma_t p_\lambda + \J_{m-1}$ is zero in the quotient space and $d \cdot w \otimes \n_t = 0$ by definition in \eqref{eqn:twistedaction}.  Thus, the actions of diagrams on basis elements are identical and the bijection extends to an $\A_k$-module isomorphism.
\end{proof}

The next theorem is then a consequence of Proposition \ref{prop:twistedaction}  and Theorem \ref{thm:HRirreducible}.

\begin{thm} If $n \in \CC$ such that $\A_k$ is semisimple, then $\{ \A^\lambda_{k} \mid \lambda \in \Lambda_n^{\A_k}\}$ is a complete set of pairwise nonisomorphic irreducible $\A_k$ modules.
\end{thm}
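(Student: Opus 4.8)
The plan is to deduce the statement as a direct corollary of the two preceding results, so that no new computation is required at this stage. First I would fix $\lambda \in \Lambda_n^{\A_k}$ with $|\lambda^\ast| = m$ and invoke Theorem~\ref{thm:HRirreducible}, which (under the standing hypothesis that $n$ is chosen so $\A_k$ is semisimple) identifies the quotient $(\A_k \e_\lambda)/\J_{m-1}$ as \emph{the} irreducible $\A_k$-module carrying the label $\lambda$. Then Proposition~\ref{prop:twistedaction} provides an $\A_k$-module isomorphism $\A^\lambda_k \cong (\A_k \e_\lambda)/\J_{m-1}$, and irreducibility transports across this isomorphism immediately. Hence each $\A^\lambda_k$ is an irreducible $\A_k$-module, and it is the one indexed by $\lambda$.

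It remains to argue completeness and pairwise non-isomorphism, which I would handle purely as bookkeeping against the index set assembled in Section~\ref{subsec:schur-weyl}. By the basic construction together with~\eqref{eq:JBClabels} and the definition~\eqref{index-set-n}, the set $\Lambda_n^{\A_k}$ is in bijection with the full collection of isomorphism classes of irreducible $\A_k$-modules, the class of $(\A_k \e_\lambda)/\J_{m-1}$ corresponding to $\lambda$. Since distinct labels attach to distinct isomorphism classes, distinct $\lambda, \lambda' \in \Lambda_n^{\A_k}$ give $\A^\lambda_k \not\cong \A^{\lambda'}_k$; and since every class is realized by some $\lambda$, the family exhausts all irreducibles. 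This yields exactly the asserted complete set of pairwise nonisomorphic irreducible $\A_k$-modules.

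Because the theorem is a formal consequence of Proposition~\ref{prop:twistedaction} and Theorem~\ref{thm:HRirreducible}, I expect no genuine obstacle in this final step: the substantive work is entirely upstream, in the verification that the twisted conjugation action of~\eqref{eqn:twistedaction} matches diagram multiplication in the quotient (Proposition~\ref{prop:twistedaction}) and in the earlier basic-construction analysis that pins down $\Lambda_n^{\A_k}$. The only point I would be careful to state uniformly is that the labeling bijection applies both to the non-planar algebras, where the labels are the partitions $\lambda^\ast \vdash m$, and to the planar algebras, where they collapse to the ranks $m$; equation~\eqref{eq:JBClabels} covers both cases at once.
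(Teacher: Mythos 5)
Your proposal is correct and matches the paper exactly: the authors state the theorem as an immediate consequence of Proposition~\ref{prop:twistedaction} and Theorem~\ref{thm:HRirreducible}, with the labeling by $\Lambda_n^{\A_k}$ coming from the basic construction as in~\eqref{eq:JBClabels} and~\eqref{index-set-n}. Your additional care in spelling out completeness, pairwise non-isomorphism, and the planar versus non-planar labeling is consistent with, and slightly more explicit than, the paper's one-line justification.
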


\begin{examp} \label{ex:choiceofbasis1}
Let $k=13$ and $\lambda = [n-5,3,2]$. There are five standard Young tableaux of shape $\lambda^\ast =[3,2]$ as shown in~\eqref{SYT32}.
Let $d$ and $w$ be the diagrams given in~\eqref{eqn:conjugation} and consider the action of $d$ on the basis element $w \ot \n_{t_4}$ of $\Pb^{\,\lambda}_k$. There is one block removed during the diagram concatenation $d\circ w$, and the five fixed blocks of $w$ are twisted by the permutation $\sigma_{d,w} = (4,3,2)$ in cycle notation. Hence $d\cdot (w\ot \n_{t_4}) = n(w'\ot\sigma_{d,w}\cdot \n_{t_4})$, where $w ' = d\circ w \circ d^T$. 
 Then $\sigma_{d,w}\cdot \n_{t_4} = \n_{\sigma_{d,w}(t_4)}$, where
\begin{equation*}
\sigma_{d,w}(t_4) = \begin{array}{c}
\scalebox{0.95}{
\begin{tikzpicture}[scale=.45]
\draw (2,2) -- (2,0) -- (0,0) -- (0,2) -- (3,2) -- (3,1) -- (0,1);
\draw (1,0) -- (1,2);
\path (0.5,1.5) node {$1$}; \path (1.5,1.5) node {$4$}; \path (2.5,1.5) node {$3$};
\path (0.5,0.5) node {$2$}; \path (1.5,0.5) node {$5$};
\end{tikzpicture}} 
\end{array}
= 
\begin{array}{c}
\scalebox{0.95}{
\begin{tikzpicture}[scale=.45]
\draw (2,2) -- (2,0) -- (0,0) -- (0,2) -- (3,2) -- (3,1) -- (0,1);
\draw (1,0) -- (1,2);
\path (0.5,1.5) node {$1$}; \path (1.5,1.5) node {$3$}; \path (2.5,1.5) node {$4$};
\path (0.5,0.5) node {$2$}; \path (1.5,0.5) node {$5$};
\end{tikzpicture}} 
\end{array} -
\begin{array}{c}
\scalebox{0.95}{
\begin{tikzpicture}[scale=.45]
\draw (2,2) -- (2,0) -- (0,0) -- (0,2) -- (3,2) -- (3,1) -- (0,1);
\draw (1,0) -- (1,2);
\path (0.5,1.5) node {$1$}; \path (1.5,1.5) node {$3$}; \path (2.5,1.5) node {$5$};
\path (0.5,0.5) node {$2$}; \path (1.5,0.5) node {$4$};
\end{tikzpicture}} 
\end{array}.
\end{equation*}
The second equality above comes from the Garnir relations (see \cite{sagan2001symmetric}), and it follows that
\begin{equation*}
d\cdot(w\ot \n_{t_4}) = n w' \ot \n_{\sigma_{d,w}(t_4)} = n w' \ot \n_{t_2} -n w' \ot \n_{t_1}.
\end{equation*}
\end{examp}

\begin{rem} A  counting argument confirms that the sum of the squares of the dimensions of the simple modules equals the dimension of $\A_k$ (Wedderburn's theorem).
For $\lambda \in \Lambda^{\A_k}_n$, the dimension of $\A^\lambda_k$ is given by
$
\dim \A^\lambda_{k} = \left| \calW^{|\lambda^\ast|}_{\!\!\calA_k} \right| f^{\lambda^\ast}.
$
For the non-planar algebras, the sum of squares of these dimensions  is given by
\begin{equation*}
\sum_{\lambda \in \Lambda^{\A_k}_n} \left(\left| \calW^{|\lambda^\ast|}_{\!\!\calA_k} \right| f^{\lambda^\ast}\right)^2 = \sum_{m \in \Gamma_{\!{\A_k}}} \sum_{\mu \vdash m} |\Wm|^2 (f^\mu)^2 = \sum_{m \in \Gamma_{\!{\A_k}}} |\Wm|^2 m! = \dim \A_k,
\end{equation*}
where we have used the corresponding symmetric group identity $m! = \sum_{\mu\vdash m}(f^\mu)^2$. The first equality uses the bijection between~\eqref{index-set-n} and~\eqref{eq:JBClabels}. The last equality is justified as follows: $|\Wm|$ counts the number of possible top (resp., bottom) rows of diagrams in $\calA_k$ with $m$ blocks distinguished as propagating blocks, so that $|\Wm|^2$ counts the number of top and bottom rows with $m$ blocks chosen from each to be propagating blocks. The distinguished blocks can be matched up in $m!$ ways, and summing over the possible ranks gives the number of basis diagrams for $\A_k$. The planar case is similar, except we have only the trivial partition $[m]$ for each $m \in \Gamma_{\A_k}$, and there is no $m!$ because propagating edges cannot cross. 
\end{rem}

\begin{rem} \label{rem:Skmodule}
When $|\lambda^\ast| = k$, the only diagrams that do not act as zero on $\A^\lambda_k$ are the permutation diagrams in $\Sb_k \subseteq \calP_k$.  Then the action~\eqref{eqn:twistedaction} is exactly the action of $\Sb_k$ on the irreducible module $\Sb^{\lambda^\ast}_k$, and there is an isomorphism $\A^\lambda_k \cong \Sb^{\lambda^*}_k$ as $\Sb_k$ modules.
\end{rem}


\newpage

\section{Set-Partition Tableaux}
\label{sec:tableaux}

In this section we describe the irreducible modules of the algebras $\A_k$ on a basis indexed by set-partition tableaux. These tableaux first appear for the partition algebra implicitly in \cite[Sec.~5.3]{BHH} and explicitly in \cite[Def.~3.14]{BH-invariant2}. They also appear (independently) as multiset tableaux in \cite{OZ}. In Section \ref{sec:setpartsubalg} we restrict the definition of set-partition tableaux to work for each of  the diagram subalgebras $\A_k$. Throughout the following we let $\Pb^{\,\lambda}_k$ denote the module $\A^\lambda_{k}$ when $\A_k$ is the partition algebra. 

\begin{definition}
Let $\lambda= [\lambda_1,\lambda_2, \dots, \lambda_\ell]$ be an integer partition of $n$ into $\ell$ parts, with $\lambda^\ast =[\lambda_2,\dots,\lambda_\ell]$, and let $\pi$ be a set partition of $\{1, \ldots, k\}$ into $t$ blocks with $|\lambda^\ast| \leq t \leq n$. A \textit{set-partition tableau $\T$ of shape $\lambda$ and content $\pi$} is a filling of the boxes of the skew shape $\lambda/[n-t]$ with the blocks of $\pi$ so that  each box of $\lambda/[n-t]$ contains a unique block of $\pi$. 
The blocks below the first row of $\T$ are called \emph{propagating} blocks, while the blocks in the first row are called \emph{non-propagating}. 
A set-partition tableau is  \textit{standard} if all of the entries of $\T$  increase across the rows from left to right and down the columns using max-entry order on the blocks of $\pi$. 
 \end{definition}

\begin{examp}
Below is a set-partition tableau $\T$ of shape $\lambda = [8,4,3,1] \vdash 16$ and content 
\begin{equation*}
\pi = \{3\mid 5 \mid 6 \mid 8 \mid 2,9 \mid  12 \mid 4,7,10,14\mid 13,15  \mid 1,16 \mid 11,17 \} 
\end{equation*}
which has $t=10$ blocks. The blocks are increasing across the rows and down the columns, so $\T$ is standard.  We have emphasized max-entry order by underlining the maximal elements in each block of $\pi$. 
\begin{align*}
\begin{array}{c}
\scalebox{0.95}{
\begin{tikzpicture}[scale=.55]
\filldraw[fill=gray!20, draw=black] (0,3) rectangle (12.5,4);
\draw (0,3) rectangle (12.5,4);
\draw[line width=1] (1.5,3) -- (1.5,0) -- (0,0) -- (0,3) -- (10,3) -- (10,2) -- (0,2);
\draw[line width=1] (7.7,3) -- (7.7,1) -- (0,1);
\draw[line width=1] (5.4,3) -- (5.4,1);
\draw[line width=1] (12.5,3) -- (12.5,4) -- (15.5,4) -- (15.5,3) -- (12.5,3); \draw[line width=1] (13.75,4) -- (13.75,3);
\draw (1.5,4) -- (1.5,3); \draw (5.4,4) -- (5.4,3); \draw (7.7,4) -- (7.7,3); \draw (10,4) -- (10,3); \draw (11.25,4) -- (11.25,3);
\path (13.1,3.5) node {$\underline{12}$}; \path (14.6,3.5) node {$1,\underline{16}$}; 
\path (.75,2.5) node {$\underline{3}$}; \path (3.4,2.5) node {$\underline{6}$}; \path (6.5,2.5) node {$\underline{8}$}; \path (8.8,2.5) node {$11,\underline{17}$};
\path (.75,1.5) node {$\underline{5}$}; \path (3.4,1.5) node {$4,7,10,\underline{14}$}; \path (6.5,1.5) node {$13,\underline{15}$};
\path (.75,0.5) node {$2,\underline{9}$};
\end{tikzpicture} }
\end{array}
\end{align*}
\end{examp}

\begin{rem}
Let $\lambda\vdash n$ and $\pi$ be a set partition of $\{1,\ldots,k\}$, and let $\T$ be a set-partition tableau of shape $\lambda$ and content $\pi$. When $n \geq 2k$ (which we  assume for the semisimplicity of $\Pb_k(n)$) there is no column of $\T$ with both propagating and non-propagating blocks. To simplify our figures we omit unnecessary boxes from the first row, and let a single box with $``\cdots"$ denote the correct number of boxes. For instance, consider the same tableau as in the example above, but where $\lambda \in \Lambda_{17,n}$ is the partition  $[n-8,4,3,1]$ for some $n \geq 34$,
\begin{align*}
\begin{array}{c}
\scalebox{0.95}{
\begin{tikzpicture}[scale=.55]
\filldraw[fill=gray!20, draw=black] (0,3) rectangle (11.5,4);
\draw[line width=1] (1.5,3) -- (1.5,0) -- (0,0) -- (0,3) -- (10,3) --(10,2) -- (0,2);
\draw[line width=1] (11.5,3) -- (11.5,4) -- (12.7,4) -- (12.7,3) -- (11.5,3);
\draw[line width=1] (12.7,3) -- (12.7,4) -- (14.5,4) -- (14.5,3) -- (12.7,3);
\draw[line width=1] (7.7,3) -- (7.7,1) -- (0,1);
\draw[line width=1] (5.4,3) -- (5.4,1);
\draw (1.5,4) -- (1.5,3); \draw (5.4,4) -- (5.4,3); \draw (7.7,4) -- (7.7,3); \draw (10,3) -- (10,4);
\path (5.75,4.8) node {$\overbrace{\phantom{\qquad\qquad\qquad\qquad\qquad\qquad\qquad\quad\quad}}^{n-10 \text{ boxes }}$};
\path (10.8,3.5) node {$\cdots$}; \path (12.1,3.5) node {$\underline{12}$};  \path (13.6,3.5) node {$1,\underline{16}$}; 
\path (.75,2.5) node {$\underline{3}$}; \path (3.4,2.5) node {$\underline{6}$}; \path (6.5,2.5) node {$\underline{8}$}; \path (8.8,2.5) node {$11,\underline{17}$};
\path (.75,1.5) node {$\underline{5}$}; \path (3.4,1.5) node {$4,7,10,\underline{14}$}; \path (6.5,1.5) node {$13,\underline{15}$};
\path (.75,0.5) node {$2,\underline{9}$};
\end{tikzpicture} }
\end{array}\! .
\end{align*}
\end{rem}

For $k \in \ZZ_{\geq 0}$ and $\lambda \in \Lambda_{k,n}$, define $\SPT(\lambda, k)$ to be the set of set-partition tableaux of shape $\lambda$ whose content is a set partition of $\{1,\ldots,k\}$, and define $\SPTb(\lambda,k)$ to be the subset of these tableaux whose first row is increasing from left to right. Finally, define $\SSPT(\lambda,k)$ to be the subset of standard set-partition tableaux. For a fixed $\lambda$ and $k$, $\SSPT(\lambda,k) \subseteq \SPTb(\lambda,k) \subseteq \SPT(\lambda,k)$, and the sizes of these sets (when $n \geq 2k$) are given by 
\begin{subequations}
\begin{align}
|\SPT(\lambda,k)| &= \sum_{t} \stirling{k}{t}\binom{t}{m}t!, \label{eqn:sptsize} \\
|\SPTb(\lambda,k)| &= \sum_{t} \stirling{k}{t}\binom{t}{m}m! = |\calW^m_{\!\calP_k}| m!,   \label{eqn:sptbsize} \\
|\SSPT(\lambda,k)| &= \sum_{t} \stirling{k}{t}\binom{t}{m} f^{\lambda/[n-t]}  = \dim \Pb^{\,\lambda}_k = \sum_{t} \stirling{k}{t}\binom{t}{m} f^{\lambda^\ast}, \label{eqn:ssptsize}
\end{align}
\end{subequations}
which are justified as follows: first partition the set $\{1,\ldots,k\}$ into at $t \geq m$ blocks, and choose $m$ of these blocks to propagate. For~\eqref{eqn:sptsize}, we can arrange the blocks of the tableau in $t!$ ways, for~\eqref{eqn:sptbsize} the first row is fixed and we are free to arrange the propagating blocks in $m!$ ways, and for~\eqref{eqn:ssptsize} the number of standard arrangements of the blocks is equal to $f^{\lambda/[n-t]}$. The second equality in~\eqref{eqn:ssptsize} also holds when $n < 2k$ \cite{BH-invariant1,BH-invariant2,BHH}. 

Recall that $\calW^m_{\!\calP_k}$ is the set of symmetric $m$-diagrams in $\calP_k$, and let $\YT(\mu)$ be the set of Young tableaux of shape $\mu$. 
For each $\lambda \in \Lambda_{k,n}$, there is an easy-to-verify  bijection,
\begin{equation}
\SPTb(\lambda,k) \xleftrightarrow{\hspace*{1cm}} \calW^{|\lambda^\ast|}_{\!\calP_k} \times \YT(\lambda^\ast), \label{eqn:sptbijection}
\end{equation}
which is given by associating $\T \in \SPTb(\lambda,k)$ with the pair $(w,t) \in \calW^{|\lambda^\ast|}_{\!\calP_k} \times \YT(\lambda^\ast)$ where $w$ is the
symmetric $|\lambda^\ast|$-diagram whose propagating and non-propagating blocks are those of $\T$ and where $t$ is the standard tableau with entries $\{1, \ldots, |\lambda^\ast|\}$ such that $i$ is placed in the same position the $i$th propagating block of $w$ occupies in $\T$.   See Example~\ref{ex:bijection}.

\begin{examp}\label{ex:bijection}
If $w \ot v_{t_4}$ is the basis element from~\eqref{eqn:conjugation}, then the bijection in \eqref{eqn:sptbijection} gives the pairing,
\begin{equation*}\begin{array}{c}
\scalebox{0.95}{
\begin{tikzpicture}[scale=0.55] 
\filldraw[fill=gray!20, draw=black] (0,2) rectangle (6.75,3);
\draw (2.5,2) -- (2.5,3); \draw (4.25,2) -- (4.25,3); \draw (5.25,2) -- (5.25,3);
\draw[line width = 1] (4.25,2) -- (4.25,0) -- (0,0) -- (0,2) -- (5.25,2) -- (5.25,1) -- (0,1);
\draw[line width = 1] (2.5,0) -- (2.5,2);
\draw[line width = 1] (6.75,2) -- (9.85,2) -- (9.85,3) -- (6.75,3) -- (6.75,2); \draw[line width = 1] (8.85,2) -- (8.85,3);
\path (6.1,2.5) node {$\cdots$};
\path (7.8,2.5) node {$3,5,\underline{6}$};  \path (9.35,2.5) node {$\underline{11}$}; 
\path (1.25,1.5) node {$1,\underline{2}$}; \path (3.375,1.5) node {$\underline{4}$}; \path (4.75,1.5) node {$\underline{12}$};
 \path (1.25,0.5) node {$8,9,\underline{10}$}; \path (3.375,0.5) node {$7,\underline{13}$}; 
\path (3.25,3.8) node {$\overbrace{\phantom{\qquad\qquad\qquad\qquad\quad}}^{n-7 \text{ blocks }}$};
\path (6.1,0.9) node {$\scalebox{1.3}{\Big\}} \lambda^\ast$};
\end{tikzpicture} } 
\end{array}
\longleftrightarrow
\left(\begin{array}{c}
\scalebox{0.9}{
\begin{tikzpicture}[scale=.55,line width=1.35pt]
\foreach \i in {1,...,13} 
{ \path (\i,2) coordinate (T\i);
 \path (\i,0) coordinate (B\i);}
\draw[rounded corners=.15mm, fill= gray!25!blue!10,draw=gray!25!blue!10,line width=4pt]  (T1) -- (T13) -- (B13) -- (B1) -- cycle;
\draw (T3) .. controls +(.1,-.7) and +(-.1,-.7) .. (T5);
\draw (B3) .. controls +(.1,.7) and +(-.1,.7) .. (B5);
\draw (T5) .. controls +(.1,-.5) and +(-.1,-.5) .. (T6);
\draw (B5) .. controls +(.1,.5) and +(-.1,.5) .. (B6);
\draw[blue!30!magenta] (T1) .. controls +(.1,-.5) and +(-.1,-.5) .. (T2);
\path (1.75,1) node {$\scriptstyle{1}$};
\draw[blue!30!magenta] (B1) .. controls +(.1,.5) and +(-.1,.5) .. (B2);
\draw[purple] (T8) .. controls +(.1,-.5) and +(-.1,-.5) .. (T9);
\draw[purple] (B8) .. controls +(.1,.5) and +(-.1,.5) .. (B9);
\draw[purple] (T9) .. controls +(.1,-.5) and +(-.1,-.5) .. (T10);
\draw[purple] (B9) .. controls +(.1,.5) and +(-.1,.5) .. (B10);
\draw[cyan] (T7) .. controls +(.1,-1) and +(-.1,-.8) .. (T13);
\draw[cyan] (B7) .. controls +(.1,1) and +(-.1,.8) .. (B13);
\draw[blue!30!magenta] (T2) -- (B2);
\draw[blue!30!green] (T4) -- (B4);
\path (3.75,1) node {$\scriptstyle{2}$};
\draw[purple] (T10) -- (B10);
\path (9.75,1) node {$\scriptstyle{3}$};
\draw[orange] (T12) -- (B12);
\path (11.75,1) node {$\scriptstyle{4}$};
\draw[cyan] (T13) -- (B13);
\path (12.75,1) node {$\scriptstyle{5}$};
\foreach \i in {1,...,13} { \fill (T\i) circle (4.5pt);   \fill (B\i) circle (4.5pt);} 
\end{tikzpicture} } 
\end{array}, \begin{array}{c}
\scalebox{0.95}{
\begin{tikzpicture}[scale=.45]
\draw (2,2) -- (2,0) -- (0,0) -- (0,2) -- (3,2) -- (3,1) -- (0,1);
\draw (1,0) -- (1,2);
\path (0.5,1.5) node {$1$}; \path (1.5,1.5) node {$2$}; \path (2.5,1.5) node {$4$};
\path (0.5,0.5) node {$3$}; \path (1.5,0.5) node {$5$};
\end{tikzpicture}\!} 
\end{array} \right).
\end{equation*}
\end{examp}

\subsection{Action of diagrams on set-partition tableaux}
\label{subsec:action}

Now we define an action of diagrams in $\calP_k$ on set-partition tableaux that generalizes the permutation action of the symmetric group on Young tableaux. For $d \in \calP_k$  let $\tp(d)$ be the partition of $\{1,\ldots,k\}$ induced on the top row of $d$. 

\begin{definition} \label{def:dT}
For a diagram $d \in \calP_k$ and a set partition $\pi$ of $\{1,\ldots,k\}$, let $d\circ \pi$ denote the diagram concatenation of $d$ with $\pi$, where $\pi$ is viewed as a one-line set-partition diagram. Given a  set-partition tableau $\T$ of shape $\lambda\vdash n$ and content $\pi$,  define the action of $d$ on $\T$, denoted $d(\T)$, to be the set-partition tableau of shape $\lambda$ where:
\begin{enumerate}[label=(\alph*)] 
\item the propagating blocks in $d(\T)$ are obtained by replacing each propagating block of $\T$ with the block it is connected to in $\tp(d \circ \pi)$, 
\item the non-propagating blocks in $d(\T)$ are 
\begin{enumerate}[label=(\roman*)]
\item the non-propagating blocks of $\tp(d\circ \pi)$, 
\item blocks of $\tp(d\circ \pi)$ which are connected only to non-propagating blocks of $\T$,
\end{enumerate}
\item the non-propagating blocks increase from left to right in the first row of $d(\T)$,
\item if the results of the above steps do not produce a set-partition tableau, then $d(\T) = 0$.
\end{enumerate}
The action of a diagram $d$ on a tableau $\T$ is easily obtained by placing $d$ above $\T$, drawing edges from the blocks of $\T$ to the corresponding blocks on the bottom row of $d$, and performing diagram multiplication. For instance, see Examples~\ref{ex:dTpartition} and~\ref{ex:dTsubalgebras}.
\end{definition}

\begin{examp} \label{ex:dTpartition}
Let $\T$ be the set-partition tableau from Example~\ref{ex:bijection}. Acting with the diagram $d$ from~\eqref{eqn:conjugation}, we find
\begin{equation*}
\hspace{-0.75cm}
\begin{array}{c}
\scalebox{1}{
\begin{tikzpicture}[scale=0.55] 
\foreach \i in {1,...,13} 
{ \path (\i-2.1,5.4) coordinate (T\i); \path (\i-2.1,7.4) coordinate (t\i); \path (\i-2.1,5) coordinate (b\i);}
\path (0.5,1.5) coordinate (B1);
\path (2.9,1.5) coordinate (B2);
\path (7.5,2.65) coordinate (B3);
\path (2.35,0.75) coordinate (B4);
\path (9.35,2.5) coordinate (B5);
\path (5.1,1.85) coordinate (B6);
\path (4.1,0.55) coordinate (B7);
\path (-2,6.4) node {$d=$};
\draw[rounded corners=.15mm, fill=gray!25,draw=gray!25,line width=4.5pt]  (t1) -- (t13) -- (T13) -- (T1) -- cycle;
\draw[line width = 1.35] (t5) .. controls +(.1,-.5) and +(-.1,-.5) .. (t6);
\draw[line width = 1.35] (t6) .. controls +(.1,-.5) and +(-.1,-.5) .. (t7);
\draw[line width = 1.35] (t8) .. controls +(.1,-1.2) and +(-.1,-1.2) .. (t12);
\draw[line width = 1.35] (t10) .. controls +(.1,-.5) and +(-.1,-.5) .. (t11);
\draw[line width = 1.35] (t7) .. controls +(.1,-.5) and +(-.1,.5) .. (T8);
\draw[line width = 1.35] (t1) .. controls +(.1,-.8) and +(-.1,.8) .. (T5);
\draw[line width = 1.35] (t8) .. controls +(.1,-.8) and +(-.1,.8) .. (T4);
\draw[line width = 1.35] (t9) .. controls +(.1,-.7) and +(-.1,.7) .. (T12);
\draw[line width = 1.35] (t2) -- (T2);
\draw[line width = 1.35] (t3) -- (T3);
\draw[line width = 1.35] (t13) -- (T13);
\draw[line width = 1.35] (T1) .. controls +(.1,.7) and +(-.1,.7) .. (T3);
\draw[line width = 1.35] (T9) .. controls +(.1,.5) and +(-.1,.5) .. (T10);
\path (-1,1.5) node {$\T=$};
\filldraw[fill=gray!20, draw=black] (0,2) rectangle (6.75,3);
\draw (2.5,2) -- (2.5,3); \draw (4.25,2) -- (4.25,3); \draw (5.25,2) -- (5.25,3);
\draw[line width = 1] (4.25,2) -- (4.25,0) -- (0,0) -- (0,2) -- (5.25,2) -- (5.25,1) -- (0,1);
\draw[line width = 1] (2.5,0) -- (2.5,2);
\draw[line width = 1] (6.75,2) -- (9.85,2) -- (9.85,3) -- (6.75,3) -- (6.75,2); \draw[line width = 1] (8.85,2) -- (8.85,3);
\path (6.1,2.5) node {$\cdots$};
\path (7.8,2.5) node {$3,5,\underline{6}$};  \path (9.35,2.5) node {$\underline{11}$}; 
\path (1.25,1.5) node {$1,\underline{2}$}; \path (3.375,1.5) node {$\underline{4}$}; \path (4.75,1.5) node {$\underline{12}$};
 \path (1.25,0.5) node {$8,9,\underline{10}$}; \path (3.35,0.5) node {$7,\underline{13}$}; 
 \begin{knot}[clip width=3,end tolerance=2pt]
 \strand[line width = 1] (b4) .. controls +(.5,-1) and +(-.5,1) .. (B2);
\strand[line width = 1] (b3) .. controls +(.1,-.9) and +(-.1,-.9) .. (b5);
\strand[line width = 1] (b5) .. controls +(.1,-.7) and +(-.1,-.7) .. (b6);
\strand[line width = 1] (b1) .. controls +(.1,-.7) and +(-.1,-.7) .. (b2);
\strand[line width = 1] (b2) .. controls +(.5,-1) and +(-.5,1) .. (B1);
\strand[line width = 1] (b12)  .. controls +(3,-5) and +(1,-0.5) .. (B6);
 \strand[line width = 1] (b10) .. controls +(-.5,-2) and +(.5,0.5) .. (B4);
\strand[line width = 1] (b7) .. controls +(.1,-1.2) and +(-.1,-1) .. (b13);
\strand[line width = 1] (b13) .. controls +(2,-5) and +(1,-0.5) .. (B7);
\strand[line width = 1] (b11) .. controls +(.1,-.5) and +(-.1,.5) .. (B5);
\strand[line width = 1] (b6) .. controls +(.5,-2) and +(-.5,2) .. (B3);
\end{knot}
 \draw[line width = 1] (b8) .. controls +(.1,-.7) and +(-.1,-.7) .. (b9);
\draw[line width = 1] (b9) .. controls +(.1,-.7) and +(-.1,-.5) .. (b10);
\foreach \i in {1,...,7} { \fill (B\i) circle (2.5pt); } 
\foreach \i in {1,...,13} {\draw[line width=1] (b\i) -- (T\i); \fill (T\i) circle (4pt); \fill (t\i) circle (4pt); \fill (b\i) circle (4pt); 
\draw  (t\i)  node[above=0.05cm]{${\scriptstyle \i}$}; } 
\end{tikzpicture} } 
\end{array} \hspace{-1cm} = \begin{array}{c}
\scalebox{1}{
\begin{tikzpicture}[scale=0.55] 
\filldraw[fill=gray!20, draw=black] (0,2) rectangle (6.25,3);
\draw (2.125,2) -- (2.125,3); \draw (3.875,2) -- (3.875,3); \draw (4.75,2) -- (4.75,3);
\draw[line width = 1] (3.875,2) -- (3.875,0) -- (0,0) -- (0,2) -- (4.75,2) -- (4.75,1) -- (0,1);
\draw[line width = 1] (2.125,0) -- (2.125,2); \draw[line width = 1] (7.1,2) -- (7.1,3);
\draw[line width = 1] (6.25,2) -- (9.15,2) -- (9.15,3) -- (6.25,3) -- (6.25,2);
\path (5.6,2.5) node {$\cdots$};
 \path (6.65,2.5) node {$\underline{4}$}; \path (8.1, 2.5) node {$10,\underline{11}$};
\path (1.0625,1.5) node {$1,2,\underline{3}$}; \path (3,1.5) node {$8,\underline{12}$}; \path (4.3,1.5) node {$\underline{9}$};
 \path (1.0625,0.5) node {$5,6,\underline{7}$}; \path (3,0.5) node {$\underline{13}$}; 
\end{tikzpicture} } 
\end{array}\hspace{-0.25cm} = d(\T).
\end{equation*}
The following diagram acts as zero on $\T$, since the result is not a set-partition tableau.
\begin{equation*}
\hspace{-0.75cm}
\begin{array}{c}
\scalebox{1}{
\begin{tikzpicture}[scale=0.55] 
\foreach \i in {1,...,13} 
{ \path (\i-2.1,5.4) coordinate (T\i); \path (\i-2.1,7.4) coordinate (t\i); \path (\i-2.1,5) coordinate (b\i);}
\path (0.5,1.5) coordinate (B1);
\path (2.9,1.5) coordinate (B2);
\path (7.5,2.65) coordinate (B3);
\path (2.35,0.75) coordinate (B4);
\path (9.35,2.5) coordinate (B5);
\path (5.1,1.85) coordinate (B6);
\path (4.1,0.55) coordinate (B7);
\path (-2,6.4) node {$d=$};
\draw[rounded corners=.15mm, fill=gray!25,draw=gray!25,line width=4.5pt]  (t1) -- (t13) -- (T13) -- (T1) -- cycle;
\draw[line width = 1.35] (t1) .. controls +(.1,-.5) and +(-.1,-.5) .. (t2);
\draw[line width = 1.35] (t3) .. controls +(.1,-1) and +(-.1,-1) .. (t6);
\draw[line width = 1.35] (t8) .. controls +(.1,-.5) and +(-.1,-.5) .. (t9);
\draw[line width = 1.35] (t12) .. controls +(.1,-.5) and +(-.1,-.5) .. (t13);
\draw[line width = 1.35] (t10) .. controls +(.1,-.8) and +(-.1,-.8) .. (t12);
\draw[line width = 1.35] (T2) .. controls +(.1,.5) and +(-.1,.5) .. (T3);
\draw[line width = 1.35] (T8) .. controls +(.1,.5) and +(-.1,.5) .. (T9);
\draw[line width = 1.35] (T6) .. controls +(.1,.8) and +(-.1,.8) .. (T8);
\draw[line width = 1.35] (t2) .. controls +(.1,-1) and +(-.1,1) .. (T5);
\draw[line width = 1.35] (t4)  .. controls +(-.1,-.8) and +(.1,.8) ..  (T3);
\draw[line width = 1.35] (t6) .. controls +(-.1,-1) and +(.1,.8) .. (T4);
\draw[line width = 1.35] (t7) -- (T7);
\draw[line width = 1.35] (t10) .. controls +(.1,-1) and +(-.1,.8) .. (T11);
\draw[line width = 1.35] (t11) .. controls +(.1,-.8) and +(-.1,.8) .. (T13);
\path (-1,1.5) node {$\T=$};
\filldraw[fill=gray!20, draw=black] (0,2) rectangle (6.75,3);
\draw (2.5,2) -- (2.5,3); \draw (4.25,2) -- (4.25,3); \draw (5.25,2) -- (5.25,3);
\draw[line width = 1] (4.25,2) -- (4.25,0) -- (0,0) -- (0,2) -- (5.25,2) -- (5.25,1) -- (0,1);
\draw[line width = 1] (2.5,0) -- (2.5,2);
\draw[line width = 1] (6.75,2) -- (9.85,2) -- (9.85,3) -- (6.75,3) -- (6.75,2); \draw[line width = 1] (8.85,2) -- (8.85,3);
\path (6.1,2.5) node {$\cdots$};
\path (7.8,2.5) node {$3,5,\underline{6}$};  \path (9.35,2.5) node {$\underline{11}$}; 
\path (1.25,1.5) node {$1,\underline{2}$}; \path (3.375,1.5) node {$\underline{4}$}; \path (4.75,1.5) node {$\underline{12}$};
 \path (1.25,0.5) node {$8,9,\underline{10}$}; \path (3.35,0.5) node {$7,\underline{13}$}; 
 \begin{knot}[clip width=3,end tolerance=2pt]
\strand[line width = 1] (b4) .. controls +(.5,-1) and +(-.5,1) .. (B2);
\strand[line width = 1] (b3) .. controls +(.1,-.9) and +(-.1,-.9) .. (b5);
\strand[line width = 1] (b5) .. controls +(.1,-.7) and +(-.1,-.7) .. (b6);
\strand[line width = 1] (b1) .. controls +(.1,-.7) and +(-.1,-.7) .. (b2);
\strand[line width = 1] (b2) .. controls +(.5,-1) and +(-.5,1) .. (B1);
\strand[line width = 1] (b12)  .. controls +(3,-5) and +(1,-0.5) .. (B6);
\strand[line width = 1] (b10) .. controls +(-.5,-2) and +(.5,0.5) .. (B4);
\strand[line width = 1] (b7) .. controls +(.1,-1.2) and +(-.1,-1) .. (b13);
\strand[line width = 1] (b13) .. controls +(2,-5) and +(1,-0.5) .. (B7);
\strand[line width = 1] (b11) .. controls +(.1,-.5) and +(-.1,.5) .. (B5);
\strand[line width = 1] (b6) .. controls +(.5,-2) and +(-.5,2) .. (B3);
\end{knot}
 \draw[line width = 1] (b8) .. controls +(.1,-.7) and +(-.1,-.7) .. (b9);
\draw[line width = 1] (b9) .. controls +(.1,-.7) and +(-.1,-.5) .. (b10);
\foreach \i in {1,...,7} { \fill (B\i) circle (2.5pt); } 
\foreach \i in {1,...,13} {\draw[line width=1] (b\i) -- (T\i); \fill (T\i) circle (4pt); \fill (t\i) circle (4pt); \fill (b\i) circle (4pt); 
\draw  (t\i)  node[above=0.05cm]{${\scriptstyle \i}$}; } 
\end{tikzpicture} } 
\end{array} \hspace{-1cm} = \begin{array}{c}
\scalebox{1}{
\begin{tikzpicture}[scale=0.55] 
\filldraw[fill=gray!20, draw=black] (0,2) rectangle (6.25,3);
\draw (2.125,2) -- (2.125,3); \draw (3.875,2) -- (3.875,3); \draw (4.75,2) -- (4.75,3);
\draw[line width = 1] (3.875,2) -- (3.875,0) -- (0,0) -- (0,2) -- (4.75,2) -- (4.75,1) -- (0,1);
\draw[line width = 1] (2.125,0) -- (2.125,2); \draw[line width = 1] (7.1,2) -- (7.1,3);
\draw[line width = 1] (6.25,2) -- (8.5,2) -- (8.5,3) -- (6.25,3) -- (6.25,2);
\draw[line width = 1] (8.5,3) -- (11.5,3) -- (11.5,2) -- (8.5,2);
\path (5.6,2.5) node {$\cdots$};
 \path (6.65,2.5) node {$\underline{5}$}; \path (7.8, 2.5) node {$8,\underline{9}$}; \path (10, 2.5) node {$10,12,\underline{13}$};
\path (1.0625,1.5) node {$1,2,\underline{4}$}; \path (3,1.5) node {$3,\underline{6}$}; \path (4.3,1.5) node {$$};
 \path (1.0625,0.5) node {$1,2,\underline{4}$}; \path (3,0.5) node {$7,\underline{11}$}; 
\end{tikzpicture} } 
\end{array}\hspace{-0.25cm} =0.
\end{equation*}
\end{examp}

\begin{rem} \label{rem:combinatorialaction} 
A diagram $d$ acts as zero on a set-partition tableau $\T$ if
\begin{enumerate}[label=(\alph*)]
\item two propagating blocks of $\T$ become connected when constructing $d(\T)$, or
\item a propagating block of $\T$ does not propagate to the top of $d$ when constructing $d(\T)$.
\end{enumerate}
\end{rem}

\subsection{Natural basis} \label{subsec:natural}

For $\lambda \in \Lambda_{k,n}$, let $\left\{ \N_\T \ \middle| \ \T \in \SSPT(\lambda,k)  \right\}$ be a set of vectors indexed by the standard set-partition tableaux of shape $\lambda$. Define
\begin{equation} \label{eqn:naturalbasis} 
\mathsf{P}_k^\lambda = \CC\text{-span}\left\{ \N_\T \ \middle| \ \T \in \SSPT(\lambda,k)  \right\},
\end{equation}
and for $d \in \calP_k$ and $\T \in \SSPT(\lambda,k)$ define
\begin{equation}  \label{eqn:naturalaction} 
d\cdot \N_\T = \begin{cases} 
n^{\ell(d,\T)} \N_{d(\T)} & \text{if $d(\T)$ is a set-partition tableau, } \\
0 & \text{if $d(\T)$ is not a set-partition tableau, }
\end{cases} 
\end{equation}
where $d(\T)$ is defined in Definition~\ref{def:dT} and $\ell(d,\T)$ is the number of connected components removed in the construction of $d(\T)$. If $d(\T)$ is not standard, then $\N_{d(\T)}$ can be expressed as an integer linear combination of basis elements using Garnir relations (see Section~\ref{subsec:symmetricgroupmodules}).

\begin{examp}
Let $d$ and $\T$ be defined as in the first example from Example~\ref{ex:dTpartition}. In the construction of $d(\T)$ there is one connected component removed, so that
\begin{equation*}
d\cdot \N_\T =n \N_{d(\T)}, \text{ where } d(\T) = 
\begin{array}{c}
\scalebox{0.9}{
\begin{tikzpicture}[scale=0.55] 
\filldraw[fill=gray!20, draw=black] (0,2) rectangle (6.25,3);
\draw (2.125,2) -- (2.125,3); \draw (3.875,2) -- (3.875,3); \draw (4.75,2) -- (4.75,3);
\draw[line width = 1] (3.875,2) -- (3.875,0) -- (0,0) -- (0,2) -- (4.75,2) -- (4.75,1) -- (0,1);
\draw[line width = 1] (2.125,0) -- (2.125,2); \draw[line width = 1] (7.1,2) -- (7.1,3);
\draw[line width = 1] (6.25,2) -- (9.15,2) -- (9.15,3) -- (6.25,3) -- (6.25,2);
\path (5.6,2.5) node {$\cdots$};
 \path (6.65,2.5) node {$\underline{4}$}; \path (8.1, 2.5) node {$10,\underline{11}$};
\path (1.0625,1.5) node {$1,2,\underline{3}$}; \path (3,1.5) node {$8,\underline{12}$}; \path (4.3,1.5) node {$\underline{9}$};
 \path (1.0625,0.5) node {$5,6,\underline{7}$}; \path (3,0.5) node {$\underline{13}$}; 
\end{tikzpicture} } 
\end{array}.
\end{equation*}
The result is nonstandard, with a descent in the first row. The Garnir relation for straightening $\N_{d(\T)}$ is:
\begin{equation*}
\begin{array}{c}
\scalebox{0.9}{
\begin{tikzpicture}[scale=0.55] 
\filldraw[fill=gray!20, draw=black] (0,2) rectangle (6.25,3);
\draw (2.125,2) -- (2.125,3); \draw (3.875,2) -- (3.875,3); \draw (4.75,2) -- (4.75,3);
\draw[line width = 1] (3.875,2) -- (3.875,0) -- (0,0) -- (0,2) -- (4.75,2) -- (4.75,1) -- (0,1);
\draw[line width = 1] (2.125,0) -- (2.125,2); \draw[line width = 1] (7.1,2) -- (7.1,3);
\draw[line width = 1] (6.25,2) -- (9.15,2) -- (9.15,3) -- (6.25,3) -- (6.25,2);
\path (5.6,2.5) node {$\cdots$};
 \path (6.65,2.5) node {$\underline{4}$}; \path (8.1, 2.5) node {$10,\underline{11}$};
\path (1.0625,1.5) node {$1,2,\underline{3}$}; \path (3,1.5) node {$8,\underline{11}$}; \path (4.3,1.5) node {$\underline{9}$};
 \path (1.0625,0.5) node {$5,6,\underline{7}$}; \path (3,0.5) node {$\underline{12}$}; 
\end{tikzpicture} } 
\end{array} =
\begin{array}{c}
\scalebox{0.9}{
\begin{tikzpicture}[scale=0.55] 
\filldraw[fill=gray!20, draw=black] (0,2) rectangle (6.35,3);
\draw (2.125,2) -- (2.125,3); \draw (3.125,2) -- (3.125,3); \draw (4.85,2) -- (4.85,3);
\draw[line width = 1] (3.125,2) -- (3.125,0) -- (0,0) -- (0,2) -- (4.85,2) -- (4.85,1) -- (0,1);
\draw[line width = 1] (2.125,0) -- (2.125,2); \draw[line width = 1] (7.2,2) -- (7.2,3);
\draw[line width = 1] (6.35,2) -- (9.25,2) -- (9.25,3) -- (6.35,3) -- (6.35,2);
\path (5.7,2.5) node {$\cdots$};
 \path (6.75,2.5) node {$\underline{4}$}; \path (8.2, 2.5) node {$10,\underline{11}$};
\path (1.0625,1.5) node {$1,2,\underline{3}$}; \path (2.625,1.5) node {$\underline{9}$}; \path (4,1.5) node {$8,\underline{11}$};
 \path (1.0625,0.5) node {$5,6,\underline{7}$}; \path (2.625,0.5) node {$\underline{12}$}; 
\end{tikzpicture} } 
\end{array} - 
\begin{array}{c}
\scalebox{0.9}{
\begin{tikzpicture}[scale=0.55] 
\filldraw[fill=gray!20, draw=black] (0,2) rectangle (6.45,3);
\draw (2.125,2) -- (2.125,3); \draw (3.875,2) -- (3.875,3); \draw (4.95,2) -- (4.95,3);
\draw[line width = 1] (3.875,2) -- (3.875,0) -- (0,0) -- (0,2) -- (4.95,2) -- (4.95,1) -- (0,1);
\draw[line width = 1] (2.125,0) -- (2.125,2); \draw[line width = 1] (7.3,2) -- (7.3,3);
\draw[line width = 1] (6.45,2) -- (9.35,2) -- (9.35,3) -- (6.45,3) -- (6.45,2);
\path (5.8,2.5) node {$\cdots$};
 \path (6.85,2.5) node {$\underline{4}$}; \path (8.3, 2.5) node {$10,\underline{11}$};
\path (1.0625,1.5) node {$1,2,\underline{3}$}; \path (3,1.5) node {$\underline{9}$}; \path (4.4,1.5) node {$\underline{12}$};
 \path (1.0625,0.5) node {$5,6,\underline{7}$}; \path (3,0.5) node {$8,\underline{11}$}; 
\end{tikzpicture} } 
\end{array}, 
\end{equation*}
and hence 
\begin{equation*}
d\cdot \N_\T =n \N_{\T_1} - n \N_{\T_2},
\end{equation*}
where $\T_1$ and $\T_2$ are the two standard set-partition tableaux appearing above. This can be compared to Example~\ref{ex:choiceofbasis1} (a), which gives the analogous action on the diagram basis. 
\end{examp}

\begin{thm} \label{thm:naturalrep}
The action defined in~\eqref{eqn:naturalaction} makes $\mathsf{P}^\lambda_k$ into a $\Pb_k(n)$-module, and $\mathsf{P}^\lambda_k \cong \Pb^{\,\lambda}_k$. 
\end{thm}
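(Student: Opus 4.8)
The plan is to transport the module structure already established on $\Pb^{\,\lambda}_k=\calW^{m}_{\!\calP_k}\ot\Sb^{\lambda^\ast}_m$ (where $m=|\lambda^\ast|$) across the bijection \eqref{eqn:sptbijection}. Restricting that bijection to standard objects gives a bijection between $\SSPT(\lambda,k)$ and the set of pairs $(w,t)$ with $w\in\calW^{m}_{\!\calP_k}$ and $t\in\SYT(\lambda^\ast)$, because a set-partition tableau is standard exactly when the filling of its propagating boxes is a standard Young tableau of shape $\lambda^\ast$. I would therefore define a linear map $\Phi\colon\mathsf{P}^\lambda_k\to\Pb^{\,\lambda}_k$ on the natural basis by $\Phi(\N_\T)=w\ot\n_t$, where $(w,t)$ is the pair associated to $\T$. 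Since $\{\N_\T\mid\T\in\SSPT(\lambda,k)\}$ and $\{w\ot\n_t\mid w\in\calW^{m}_{\!\calP_k},\,t\in\SYT(\lambda^\ast)\}$ are bases of the two spaces, $\Phi$ is a vector-space isomorphism.

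Everything then reduces to the single intertwining identity $\Phi(d\cdot\N_\T)=d\cdot\Phi(\N_\T)$ for every $d\in\calP_k$ and every standard $\T$: once this holds, the combinatorial rule \eqref{eqn:naturalaction} agrees with the transport under $\Phi$ of a genuine module action, so $\mathsf{P}^\lambda_k$ is a $\Pb_k(n)$-module (well-definedness and associativity being inherited from $\Pb^{\,\lambda}_k$) and $\Phi$ is a module isomorphism. To prove the identity I would compare \eqref{eqn:naturalaction} with \eqref{eqn:twistedaction} term by term, after first establishing a diagram-calculus dictionary: if $\T\leftrightarrow(w,t)$ has content $\pi$, then the set partition carried by $d(\T)$ is the top-row set partition of the conjugate $d\circ w\circ d^T$; a block of $d\circ w\circ d^T$ propagates precisely when it is obtained by following a propagating block of $w$ upward through $d$, which is rule (a) of Definition~\ref{def:dT}; and the counts of deleted middle components agree, $\ell(d,\T)=\ell(d,w)$ (both being computed from $d\circ\pi$). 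It then follows that $\pn(d\circ w\circ d^T)=m$ exactly when neither condition (a) nor (b) of Remark~\ref{rem:combinatorialaction} occurs, so the two vanishing clauses coincide and the two scalars match.

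When the rank is preserved, I would verify that the placement of propagating blocks in $d(\T)$ realizes the permuted tableau $\sigma_{d,w}(t)$: ordering propagating blocks of both $w$ and $d\circ w\circ d^T$ by max-entry order---the same order governing standardness of tableaux---the $i$th propagating block of $w$ occupies the position of the $\sigma_{d,w}(i)$th propagating block of the conjugate, so $d(\T)$ corresponds under $\Phi$ to $(d\circ w\circ d^T)\ot\n_{\sigma_{d,w}(t)}$. It remains to match the two straightening steps: a nonstandard $\N_{d(\T)}$ is rewritten on the standard basis by Garnir relations on the shape $\lambda^\ast$, while $\n_{\sigma_{d,w}(t)}=\sigma_{d,w}\cdot\n_t$ is rewritten by the same Garnir relations in Young's natural representation (Section~\ref{subsec:symmetricgroupmodules}). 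Since Garnir straightening depends only on the relative order of the entries within $\lambda^\ast$, and \eqref{eqn:sptbijection} identifies max-entry order on blocks with the natural order on $\{1,\dots,m\}$, the two yield identical integer coefficients, completing the identity.

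The main obstacle is the diagram-calculus dictionary of the second paragraph---proving rigorously that the combinatorially defined $d(\T)$ is the top row of $d\circ w\circ d^T$ together with the correct twist, including the bookkeeping that the new non-propagating blocks produced by rule (b)(ii) of Definition~\ref{def:dT} are exactly the non-propagating blocks of $d\circ w\circ d^T$, and that no middle component is miscounted. I expect this to be routine but notation-heavy, best carried out by tracking a single block through the concatenation $d\circ w$ and invoking the symmetry $w=w^T$ together with Remark~\ref{rem:dddT}. With the dictionary in hand, the scalar, vanishing, twist, and straightening statements all follow, and the theorem is then immediate from Proposition~\ref{prop:twistedaction}.
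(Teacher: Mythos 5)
Your proposal is correct and follows essentially the same route as the paper: both transport the module structure of $\Pb^{\,\lambda}_k = \calW^{m}_{\!\calP_k}\ot\Sb^{\lambda^\ast}_m$ across the bijection \eqref{eqn:sptbijection} and verify that the combinatorial rule \eqref{eqn:naturalaction} matches the twisted conjugation action \eqref{eqn:twistedaction} block by block, including the agreement of the scalars $\ell(d,\T)=\ell(d,w)$, the vanishing criteria via Remark~\ref{rem:combinatorialaction}, and the identification of $d(\T)$ with $(d\circ w\circ d^T)\ot\n_{\sigma_{d,w}(t)}$. Your write-up is somewhat more explicit about the intertwining map and the compatibility of the Garnir straightening on both sides, but the argument is the same.
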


\begin{proof}
We show that the action defined on set-partition tableaux is simply the result of applying the bijection~\eqref{eqn:sptbijection} to the action defined in~\eqref{eqn:twistedaction} when the basis for $\Sb^{\lambda^\ast}_m$ is Young's natural basis $v_t = \n_t$. Let $\T$ be a standard set-partition tableau of shape $\lambda$ and content $\pi$, and let $w \ot \n_t$, be the basis element associated to $\T$ via the bijection~\eqref{eqn:sptbijection}. Assuming $\pn(d\circ w \circ d^T) = m$, we have $d \cdot(w\ot \n_t) =n^{\ell(d,w)} d\circ w \circ d^T \ot \n_{\sigma_{d,w}(t)}$, where the $i$th propagating block of $w$ gets sent to the $\sigma_{d,w}(i)$th propagating block of $d\circ w \circ d^T$. To obtain $\sigma_{d,w}(t)$, we replace $i$ with $\sigma_{d,w}(i)$.  
If $\T'$ is the set-partition tableau associated to $d\circ w \circ d^T \ot \n_{\sigma_{d,w}(t)}$  by~\eqref{eqn:sptbijection}, then the propagating blocks of $\T'$ are obtained by replacing the $i$th propagating block of $\T$ with the $\sigma_{d,w}(i)$th propagating block of $d \circ w \circ d^T$ for each $i$, and the non-propagating blocks of $\T'$ are the non-propagating blocks of $d\circ w \circ d^T \ot \n_{\sigma_{d,w}(t)}$, which  are either the non-propagating blocks of $d$ or the blocks of $d$ connected only to non-propagating blocks of $w$. 
Hence $\T' = d(\T)$. One can easily confirm that the connected components removed in the construction of $d(\T)$ are connected only to non-propagating blocks of $\T$, otherwise the action gives zero. Hence $\ell(d,\T)  = \ell(d,w)$. Finally, by Remark~\ref{rem:combinatorialaction} the criteria for $d(\T)= 0$ are equivalent to the criteria for $\pn(d\circ w \circ d^T) < m$. 
\end{proof}

\begin{rem}
The construction defined in~\eqref{eqn:naturalbasis} and~\eqref{eqn:naturalaction} is a partition algebra analogue of Young's natural basis for the irreducible modules of the symmetric group, and we refer to $\left\{ \N_\T \ \middle| \ \T \in \SSPT(\lambda,k)  \right\}$ as the \emph{natural basis} for $\Pb^{\,\lambda}_k$. Analogous modules can be constructed when the basis for $\Sb^{\lambda^\ast}_m$ is seminormal $\vs_t$ or orthogonal $\us_t$. However, the action on these modules, though isomorphic to the one defined above, lacks the ``naturalness" evident in~\eqref{eqn:naturalaction}.
\end{rem}

The generators $\mathfrak{s}_i, \mathfrak{p}_i, \mathfrak{b}_i,$  have particularly nice actions on set-partition tableaux, which we describe in the following theorem. The actions of the generators $\mathfrak{e}_i, \mathfrak{l}_i$, and $\mathfrak{r}_i$ of the subalgebras are omitted for brevity but can easily be obtained from $\mathfrak{s}_i, \mathfrak{b}_i$, and $\mathfrak{p}_i$. 

\begin{thm} \label{thm:generatorsactionSPT}
Let $\lambda \in \Lambda_{k,n}$ and $\T \in \SSPT(\lambda, k)$, so that $\N_\T$ is an element of the natural basis for $\mathsf{P}^{\lambda}_k$. Then the action of $\mathfrak{s}_i$, $\mathfrak{p}_i$, and $\mathfrak{b}_i$ on $\N_\T$ are given by: 
\begin{enumerate}[label=\rm{(\alph*)}]

\item $\displaystyle \mathfrak s_i \cdot \N_\T = \N_{\mathfrak s_i(\T)},$ where $\mathfrak s_i(\T)$ is the set-partition tableau in $\SPTb(\lambda,k)$ obtained from $\T$ by swapping $i$ and $i+1$, and standardizing the first row.

\item $\displaystyle \mathfrak p_i \cdot \N_\T = \begin{dcases}
n \N_{\T} & \text{if $\{i\}$ is a non-propagating singleton block in $\T$, }\\
0 & \text{if $\{i\}$ is a propagating singleton block in $\T$, }\\
\N_{\mathfrak p_i(\T)} & \text{otherwise, } 
\end{dcases}$

\noindent
where $\mathfrak p_i(\T)$ is the set-partition tableau in $\SPTb(\lambda,k)$ obtained from $\T$ by removing $i$ from its block, 
placing the singleton block $\{i\}$ into the first row,
and standardizing the first row.

\item $\displaystyle \mathfrak b_i \cdot \N_\T = \begin{dcases}
\N_{\T} & \text{if $i$ and $i+1$ are in the same block in $\T$, }\\
0 & \text{if $i$ and $i+1$ are in different propagating blocks in $\T$,} \\
\N_{\mathfrak b_i(\T)} & \text{otherwise, } 
\end{dcases}$

\noindent
where $\mathfrak b_i(\T)$ is the set-partition tableau in $\SPTb(\lambda,k)$ obtained from $\T$ by joining the block containing $i$ with the block containing $i+1$, and standardizing the first row.  
The resulting block becomes propagating if one of the original blocks was propagating, and otherwise stays non-propagating.

\end{enumerate}
If ${\mathfrak s_i(\T)}$, ${\mathfrak p_i(\T)}$, ${\mathfrak b_i(\T)}$
is a nonstandard set-partition tableau then $\N_{\mathfrak s_i(\T)}$, $\N_{\mathfrak p_i(\T)}$, $\N_{\mathfrak b_i(\T)}$ 
can be expressed as an integer linear combination of basis elements using Garnir relations (see Section~\ref{subsec:symmetricgroupmodules}).
\end{thm}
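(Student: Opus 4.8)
The plan is to observe that the action of \emph{every} diagram on set-partition tableaux has already been defined in Definition~\ref{def:dT} and shown to make $\mathsf{P}^\lambda_k$ into a $\Pb_k(n)$-module in Theorem~\ref{thm:naturalrep}. Thus this theorem is not a new construction but a specialization: I would apply the general rule to each of the three generators $\mathfrak{s}_i$, $\mathfrak{p}_i$, $\mathfrak{b}_i$ and read off the combinatorial outcome. The single computation driving everything is the top partition $\tp(\mathfrak{g}\circ\pi)$ obtained by concatenating the generator $\mathfrak{g}$ with the content $\pi$ of $\T$; once this is known, parts (a)--(d) of Definition~\ref{def:dT} together with the zero-criteria of Remark~\ref{rem:combinatorialaction} determine the result, and the power of $n$ is governed by $\ell(\mathfrak{g},\T)$, the number of connected components deleted into the middle row.

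First I would dispatch $\mathfrak{s}_i$ and $\mathfrak{b}_i$, for which no middle component is ever removed, so $\ell=0$ and no factor of $n$ appears. Reading the diagram of $\mathfrak{s}_i$, concatenation with $\pi$ merely transposes the labels $i$ and $i+1$ within the blocks while preserving all blocks and their propagation status (the diagram has full rank $k$, so every block of $\T$ still propagates to the top). Hence $\mathfrak{s}_i(\T)$ is $\T$ with $i,i+1$ swapped, after which the first row must be restandardized into $\SPTb(\lambda,k)$. For $\mathfrak{b}_i$, the four vertices $i,i+1,i',(i+1)'$ form a single block, so $\tp(\mathfrak{b}_i\circ\pi)$ is $\pi$ with the blocks containing $i$ and $i+1$ merged. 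The three cases then correspond exactly to: those two blocks coinciding (nothing merges, giving $\N_\T$); both being propagating, so that two propagating blocks of $\T$ become connected and the action is zero by Remark~\ref{rem:combinatorialaction}(a); and the remaining case, in which the merged block propagates precisely when one of the two original blocks did.

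Next I would treat $\mathfrak{p}_i$, where the factor-of-$n$ bookkeeping is the only genuinely delicate point. Since $\mathfrak{p}_i$ isolates vertex $i$ on both rows, concatenation removes $i$ from its block in $\pi$ and introduces a fresh non-propagating singleton $\{i\}$ on top. If $\{i\}$ is already a non-propagating singleton of $\T$, then gluing the block $\{i\}$ of $\pi$ to the isolated bottom vertex of $\mathfrak{p}_i$ produces a connected component lying entirely in the middle row; this is deleted, so $\ell=1$, the tableau is structurally unchanged, and the result is $n\N_\T$. If instead $\{i\}$ is a propagating singleton, it fails to propagate to the top of $\mathfrak{p}_i$, so the action is zero by Remark~\ref{rem:combinatorialaction}(b). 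In every other case $i$ lies in a block of size at least two whose remaining vertices still reach the top, so no middle component is removed ($\ell=0$) and $\{i\}$ is simply split off into the first row, yielding $\N_{\mathfrak{p}_i(\T)}$.

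The main obstacle is therefore not conceptual but the careful tracking of when a connected component is deleted into the middle row, as this alone distinguishes the $n\N_\T$ outcome from the $\N_\T$ outcome for $\mathfrak{p}_i$ and confirms the absence of any scalar for $\mathfrak{s}_i$ and $\mathfrak{b}_i$; I expect this to be the step most prone to error. The standardization of the first row and, when the resulting tableau is nonstandard, the straightening of $\N_{\mathfrak{g}(\T)}$ into the natural basis via Garnir relations require no new argument, since they are inherited directly from the module isomorphism of Theorem~\ref{thm:naturalrep} and the natural basis of $\Sb^{\lambda^\ast}_m$ described in Section~\ref{subsec:symmetricgroupmodules}.
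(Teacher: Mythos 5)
Your proposal is correct and takes essentially the same route as the paper, whose proof is simply the remark that the formulas follow from the general action~\eqref{eqn:naturalaction} by diagram calculus as in Example~\ref{ex:dTpartition}; you have just spelled out that calculus explicitly for each generator, including the one genuinely delicate point (the deleted middle component that produces the factor $n$ when $\{i\}$ is a non-propagating singleton under $\mathfrak p_i$), and your case analysis matches the statement in every branch.
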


\begin{proof}
The action is easily obtained from~\eqref{eqn:naturalaction} through diagram calculus as in Example~\ref{ex:dTpartition}. 
\end{proof}

\begin{examp}

We give examples of the explicit action of $\mathfrak p_i$ and $\mathfrak b_i$ described above. 

\begin{enumerate}[label=\rm{(\alph*)}]

\item \emph{Action of $\mathfrak p_i$.} Consider the following standard set-partition tableau $\T$ of shape $[n-4,3,1]$,
\begin{align*}
\mathfrak p_5 \left(
\begin{array}{c}
\scalebox{0.95}{
\begin{tikzpicture}[scale=.55] 
\fill[gray!25, draw=black] (0,2) rectangle (5.5,3);
\draw[line width=1] (1,2) -- (1,0) -- (0,0) -- (0,2) -- (4,2) -- (4,1) -- (0,1);
\draw[line width=1] (3,1) -- (3,2); \draw[line width=1] (6.25,2) -- (6.25,3);
\draw[line width=1] (5.5,2) -- (7.75,2) -- (7.75,3) -- (5.5,3) -- (5.5,2);
\draw (1,2) -- (1,3); \draw (3,2) -- (3,3); \draw (4,2) -- (4,3);
\path (4.8,2.5) node {$\cdots$}; \path (5.9,2.5) node {$\underline{1}$}; \path (7,2.5) node {$5,\underline{6}$};
\path (0.5,1.5) node {$\underline{4}$}; \path (0.5,0.5) node {$\underline{7}$}; 
\path (2,1.5) node {$2,3,\underline{8}$}; \path (3.5,1.5) node {$\underline{9}$}; 
\end{tikzpicture} } 
\end{array} \right) &= 
\begin{array}{c}
\scalebox{0.95}{
\begin{tikzpicture}[scale=.55] 
\fill[gray!25, draw=black] (0,2) rectangle (5.5,3);
\draw[line width=1] (1,2) -- (1,0) -- (0,0) -- (0,2) -- (4,2) -- (4,1) -- (0,1);
\draw[line width=1] (3,1) -- (3,2); \draw[line width=1] (6.25,2) -- (6.25,3);  \draw[line width=1] (7,2) -- (7,3);
\draw[line width=1] (5.5,2) -- (7.75,2) -- (7.75,3) -- (5.5,3) -- (5.5,2);
\draw (1,2) -- (1,3); \draw (3,2) -- (3,3); \draw (4,2) -- (4,3);
\path (4.8,2.5) node {$\cdots$}; \path (5.9,2.5) node {$\underline{1}$}; \path (6.65,2.5) node {$\underline{5}$}; \path (7.35,2.5) node {$\underline{6}$};
\path (0.5,1.5) node {$\underline{4}$}; \path (0.5,0.5) node {$\underline{7}$}; 
\path (2,1.5) node {$2,3,\underline{8}$}; \path (3.5,1.5) node {$\underline{9}$}; 
\end{tikzpicture} } 
\end{array}, \\
\mathfrak p_8 \left(
\begin{array}{c}
\scalebox{0.95}{
\begin{tikzpicture}[scale=.55] 
\fill[gray!25, draw=black] (0,2) rectangle (5.5,3);
\draw[line width=1] (1,2) -- (1,0) -- (0,0) -- (0,2) -- (4,2) -- (4,1) -- (0,1);
\draw[line width=1] (3,1) -- (3,2); \draw[line width=1] (6.25,2) -- (6.25,3);
\draw[line width=1] (5.5,2) -- (7.75,2) -- (7.75,3) -- (5.5,3) -- (5.5,2);
\draw (1,2) -- (1,3); \draw (3,2) -- (3,3); \draw (4,2) -- (4,3);
\path (4.8,2.5) node {$\cdots$}; \path (5.9,2.5) node {$\underline{1}$}; \path (7,2.5) node {$5,\underline{6}$};
\path (0.5,1.5) node {$\underline{4}$}; \path (0.5,0.5) node {$\underline{7}$}; 
\path (2,1.5) node {$2,3,\underline{8}$}; \path (3.5,1.5) node {$\underline{9}$}; 
\end{tikzpicture} } 
\end{array} \right) &= 
\begin{array}{c}
\scalebox{0.95}{
\begin{tikzpicture}[scale=.55] 
\fill[gray!25, draw=black] (0,2) rectangle (5,3);
\draw[line width=1] (1,2) -- (1,0) -- (0,0) -- (0,2) -- (3.5,2) -- (3.5,1) -- (0,1);
\draw[line width=1] (2.5,1) -- (2.5,2); \draw[line width=1] (5.75,2) -- (5.75,3); \draw[line width=1] (7.1,2) -- (7.1,3);
\draw[line width=1] (5,2) -- (7.75,2) -- (7.75,3) -- (5,3) -- (5,2);
\draw (1,2) -- (1,3); \draw (2.5,2) -- (2.5,3); \draw (3.5,2) -- (3.5,3);
\path (4.3,2.5) node {$\cdots$}; \path (5.4,2.5) node {$\underline{1}$}; \path (6.4,2.5) node {$5,\underline{6}$}; \path (7.45,2.5) node {$\underline{8}$};
\path (0.5,1.5) node {$\underline{4}$}; \path (0.5,0.5) node {$\underline{7}$}; 
\path (1.75,1.5) node {$2,\underline{3}$}; \path (3,1.5) node {$\underline{9}$}; 
\end{tikzpicture} } 
\end{array}.
\end{align*}
Since $1$ is a non-propagating singleton, $\mathfrak p_1\cdot \N_\T = n \N_\T$. Since $4$ is a propagating singleton, $\mathfrak p_4$ acts as zero on $\T$. When $\mathfrak p_5$ acts on $\T$, it separates $5$ and $6$. When $\mathfrak p_8$ acts on $\T$, it moves $8$ to its own block on the first row, and the result is nonstandard. We then have $\mathfrak p_5 \cdot \N_\T = \N_{\mathfrak p_5(\T)}$ and $\mathfrak p_8 \cdot \N_\T = \N_{\mathfrak p_8(\T)}$.

\item \emph{Action of $\mathfrak b_i$.} Consider the following set-partition tableau $\T$ of shape $[n-4,3,1]$,
\begin{align*}
\mathfrak b_2 \left(
\begin{array}{c}
\scalebox{0.95}{
\begin{tikzpicture}[scale=0.55] 
\fill[gray!25, draw=black] (0,2) rectangle (5,3);
\draw[line width=1] (1.5,2) -- (1.5,0) -- (0,0) -- (0,2) -- (3.5,2) -- (3.5,1) -- (0,1);
\draw[line width=1] (2.4,2) -- (2.4,1); \draw[line width=1] (6.4,2) -- (6.4,3);
\draw[line width=1] (5,2) -- (8.5,2) -- (8.5,3) -- (5,3) -- (5,2);
\draw (1.5,2) -- (1.5,3); \draw (2.4,2) -- (2.4,3); \draw (3.5,2) -- (3.5,3);
\path (4.35,2.5) node {$\cdots$}; \path (5.7,2.5) node {$6,\underline{8}$}; \path (7.4,2.5) node {$1,2,\underline{9}$};
\path (0.75,1.5) node {$\underline{3}$}; \path (1.9,1.5) node {$\underline{7}$}; \path (2.95,1.5) node {$\underline{10}$};
\path (0.75,0.5) node {$4,\underline{5}$};
\end{tikzpicture} } 
\end{array} \right) &= 
\begin{array}{c}
\scalebox{0.95}{
\begin{tikzpicture}[scale=0.55] 
\fill[gray!25, draw=black] (0,2) rectangle (6.5,3);
\draw[line width=1] (3,2) -- (3,0) -- (0,0) -- (0,2) -- (5,2) -- (5,1) -- (0,1);
\draw[line width=1] (3.9,2) -- (3.9,1);
\draw[line width=1] (6.5,2) -- (6.5,3) -- (8,3) --(8,2) -- (6.5,2);
\draw (3,2) -- (3,3); \draw (3.9,2) -- (3.9,3); \draw (5,2) -- (5,3);
\path (5.85,2.5) node {$\cdots$}; \path (7.25,2.5) node {$6,\underline{8}$};
\path (1.5,1.5) node {$1,2,3,\underline{9}$}; \path (1.5,0.5) node {$4,\underline{5}$}; 
\path (3.45,1.5) node {$\underline{7}$}; \path (4.45,1.5) node {$\underline{10}$}; 
\end{tikzpicture} } 
\end{array}, \\
\mathfrak b_8 \left(
\begin{array}{c}
\scalebox{0.95}{
\begin{tikzpicture}[scale=0.55] 
\fill[gray!25, draw=black] (0,2) rectangle (5,3);
\draw[line width=1] (1.5,2) -- (1.5,0) -- (0,0) -- (0,2) -- (3.5,2) -- (3.5,1) -- (0,1);
\draw[line width=1] (2.4,2) -- (2.4,1); \draw[line width=1] (6.4,2) -- (6.4,3);
\draw[line width=1] (5,2) -- (8.5,2) -- (8.5,3) -- (5,3) -- (5,2);
\draw (1.5,2) -- (1.5,3); \draw (2.4,2) -- (2.4,3); \draw (3.5,2) -- (3.5,3);
\path (4.35,2.5) node {$\cdots$}; \path (5.7,2.5) node {$6,\underline{8}$}; \path (7.4,2.5) node {$1,2,\underline{9}$};
\path (0.75,1.5) node {$\underline{3}$}; \path (1.9,1.5) node {$\underline{7}$}; \path (2.95,1.5) node {$\underline{10}$};
\path (0.75,0.5) node {$4,\underline{5}$};
\end{tikzpicture} } 
\end{array} \right) &= 
\begin{array}{c}
\scalebox{0.95}{
\begin{tikzpicture}[scale=0.55] 
\fill[gray!25, draw=black] (0,2) rectangle (5,3);
\draw[line width=1] (1.5,2) -- (1.5,0) -- (0,0) -- (0,2) -- (3.5,2) -- (3.5,1) -- (0,1);
\draw[line width=1] (2.4,2) -- (2.4,1);
\draw[line width=1] (5,2) -- (8.5,2) -- (8.5,3) -- (5,3) -- (5,2);
\draw (1.5,2) -- (1.5,3); \draw (2.4,2) -- (2.4,3); \draw (3.5,2) -- (3.5,3);
\path (4.35,2.5) node {$\cdots$}; \path (6.75,2.5) node {$1,2,6,8,\underline{9}$};
\path (0.75,1.5) node {$\underline{3}$}; \path (1.95,1.5) node {$\underline{7}$}; \path (2.95,1.5) node {$\underline{10}$};
\path (0.75,0.5) node {$4,\underline{5}$};
\end{tikzpicture} } 
\end{array}.
\end{align*}
Since $1$ and $2$, and $4$ and $5$, are in the same block, both $\mathfrak b_1$ and $\mathfrak b_4$ fix $\T$. Since $3$ and $4$ are in different propagating blocks, $\mathfrak b_3$ acts as zero on $\T$. When $\mathfrak b_2$ acts on $\T$, the contents of the block containing $2$ are appended to the block containing $3$, and the result is nonstandard. Finally, $\mathfrak b_8$ acts by joining the blocks containing $8$ and $9$. Thus $\mathfrak b_2 \cdot \N_\T = \N_{\mathfrak b_2(\T)}$ and $\mathfrak b_8 \cdot \N_\T = \N_{\mathfrak b_8(\T)}$.

\end{enumerate}
\end{examp}

\subsection{Subalgebras}\label{sec:setpartsubalg}

When $\A_k$ is a subalgebra of the partition algebra, applying the bijection~\eqref{eqn:sptbijection} to basis elements of $\A^\lambda_k$ yields restricted types of standard set-partition tableaux of shape $\lambda \in \Lambda^{\A_k}_n$. In particular, for all of the proper subalgebras the propagating blocks are singletons. For the Brauer and Temperley-Lieb algebras the non-propagating blocks are pairs, for the rook-Brauer and Motzkin algebras the non-propagating blocks are  pairs or singletons, and for the rook monoid and planar rook monoid algebras the non-propagating blocks are singletons. Below are example set-partition tableaux for these subalgebras.
\begin{equation*}
\begin{array}{l l}
\begin{array}{c}
\scalebox{0.8}{
\begin{tikzpicture}[scale=0.55] 
\fill[gray!25, draw=black] (0,2) rectangle (4.5,3);
\draw[line width=1] (2,2) -- (2,1);
\foreach \i in {1,...,3} {\draw (\i,3) -- (\i,2);}
\draw[line width=1] (1,2) -- (1,0) -- (0,0) -- (0,2) -- (3,2) -- (3,1) -- (0,1);
\draw[line width=1] (4.5,2) -- (4.5,3) -- (8.4,3) -- (8.4,2) -- (4.5,2);
\draw[line width=1] (5.8,3) -- (5.8,2); \draw[line width=1] (7.1,3) -- (7.1,2); 
\path (3.85,2.5) node {$\cdots$}; \path (5.15,2.5) node {$1,\underline{3}$};  \path (6.45,2.5) node {$5,\underline{6}$};  \path (7.75,2.5) node {$4,\underline{8}$}; 
\path (0.5,1.5) node {$\underline{2}$}; \path (1.5,1.5) node {$\underline{7}$}; \path (2.5,1.5) node {$\underline{10}$}; 
\path (0.5,0.5) node {$\underline{9}$}; 
\end{tikzpicture} } 
\end{array} \!\! \in \SSPT(\lambda, \calW^4_{\!\mathcal{B}_{10}}) 
&
\begin{array}{c}
\scalebox{0.8}{
\begin{tikzpicture}[scale=0.55] 
\fill[gray!25, draw=black] (0,2) rectangle (5.5,3);
\draw[line width=1] (2,2) -- (2,1);\draw[line width=1] (3,2) -- (3,1);
\foreach \i in {1,...,4} {\draw (\i,3) -- (\i,2);}
\draw[line width=1] (1,2) -- (1,1) -- (0,1) -- (0,2) -- (4,2) -- (4,1) -- (0,1);
\draw[line width=1] (5.5,2) -- (5.5,3) -- (9.4,3) -- (9.4,2) -- (5.5,2);
\draw[line width=1] (6.8,3) -- (6.8,2); \draw[line width=1] (8.1,3) -- (8.1,2); 
\path (4.85,2.5) node {$\cdots$}; \path (6.15,2.5) node {$2,\underline{3}$};  \path (7.45,2.5) node {$1,\underline{4}$};  \path (8.75,2.5) node {$8,\underline{9}$}; 
\path (0.5,1.5) node {$\underline{5}$}; \path (1.5,1.5) node {$\underline{6}$}; \path (2.5,1.5) node {$\underline{7}$}; \path (3.5,1.5) node {$\underline{10}$}; 
\end{tikzpicture} } 
\end{array} \!\! \in \SSPT(\lambda,\calW^4_{\!\mathcal{TL}_{10}}) 
\\
\begin{array}{c}
\scalebox{0.8}{
\begin{tikzpicture}[scale=0.55] 
\fill[gray!25, draw=black] (0,2) rectangle (3.5,3);
\foreach \i in {1,...,2} {\draw (\i,3) -- (\i,2);}
\draw[line width=1] (1,2) -- (1,0) -- (0,0) -- (0,2) -- (2,2) -- (2,1) -- (0,1);
\draw[line width=1] (3.5,2) -- (3.5,3) -- (9.1,3) -- (9.1,2) -- (3.5,2);
\foreach \i in {4.3,5.7,6.6,7.5} {\draw[line width=1] (\i,3) -- (\i,2);}
\path (2.85,2.5) node {$\cdots$}; \path (3.9,2.5) node {$\underline{2}$};  \path (4.95,2.5) node {$1,\underline{4}$};  \path (6.15,2.5) node {$\underline{5}$}; \path (7.05,2.5) node {$\underline{6}$}; \path (8.3,2.5) node {$8,\underline{10}$}; 
\path (0.5,1.5) node {$\underline{3}$}; \path (1.5,1.5) node {$\underline{9}$}; 
\path (0.5,0.5) node {$\underline{7}$}; 
\end{tikzpicture} } 
\end{array} \!\! \in \SSPT(\lambda,\calW^3_{\!\mathcal{RB}_{10}}) 
&
\begin{array}{c}
\scalebox{0.8}{
\begin{tikzpicture}[scale=0.55] 
\fill[gray!25, draw=black] (0,2) rectangle (4.5,3);
\foreach \i in {1,...,3} {\draw (\i,3) -- (\i,2);}
\draw[line width=1] (1,2) -- (1,1) -- (0,1) -- (0,2) -- (3,2) -- (3,1) -- (0,1);
\draw[line width=1] (2,2) -- (2,1);
\draw[line width=1] (4.5,2) -- (4.5,3) -- (10.1,3) -- (10.1,2) -- (4.5,2);
\foreach \i in {5.85,6.7,7.6,8.5} {\draw[line width=1] (\i,3) -- (\i,2);}
\path (3.85,2.5) node {$\cdots$}; \path (5.2,2.5) node {$3,\underline{4}$};  \path (6.3,2.5) node {$\underline{5}$};  \path (7.15,2.5) node {$\underline{8}$}; \path (8.05,2.5) node {$\underline{9}$}; \path (9.3,2.5) node {$7,\underline{10}$}; 
\path (0.5,1.5) node {$\underline{1}$}; \path (1.5,1.5) node {$\underline{2}$}; \path (2.5,1.5) node {$\underline{6}$}; 
\end{tikzpicture} } 
\end{array} \!\! \in \SSPT(\lambda,\calW^3_{\!\mathcal{M}_{10}}) 
\\
\begin{array}{c}
\scalebox{0.8}{
\begin{tikzpicture}[scale=0.55] 
\fill[gray!25, draw=black] (0,2) rectangle (4.5,3);
\foreach \i in {1,...,3} {\draw (\i,3) -- (\i,2);}
\draw[line width=1] (1,2) -- (1,0) -- (0,0) -- (0,2) -- (3,2) -- (3,1) -- (0,1);
\draw[line width=1] (2,2) -- (2,0) -- (1,0);
\draw[line width=1] (4.5,2) -- (4.5,3) -- (9,3) -- (9,2) -- (4.5,2);
\foreach \i in {5.4,6.3,7.2,8.1} {\draw[line width=1] (\i,3) -- (\i,2);}
\path (3.85,2.5) node {$\cdots$};  \path (4.95,2.5) node {$\underline{3}$};  \path (5.85,2.5) node {$\underline{4}$}; \path (6.75,2.5) node {$\underline{5}$}; \path (7.65,2.5) node {$\underline{7}$}; \path (8.55,2.5) node {$\underline{9}$}; 
\path (0.5,1.5) node {$\underline{1}$}; \path (1.5,1.5) node {$\underline{2}$};  \path (2.5,1.5) node {$\underline{8}$}; 
\path (0.5,0.5) node {$\underline{6}$};  \path (1.5,0.5) node {$\underline{10}$}; 
\end{tikzpicture} } 
\end{array} \!\! \in \SSPT(\lambda,\calW^5_{\!\mathcal{R}_{10}}) 
&
\begin{array}{c}
\scalebox{0.8}{
\begin{tikzpicture}[scale=0.55] 
\fill[gray!25, draw=black] (0,2) rectangle (4.5,3);
\foreach \i in {1,...,3} {\draw (\i,3) -- (\i,2);}
\draw[line width=1] (1,2) -- (1,1) -- (0,1) -- (0,2) -- (3,2) -- (3,1) -- (0,1);
\draw[line width=1] (2,2) -- (2,1);
\draw[line width=1] (4.5,2) -- (4.5,3) -- (10.8,3) -- (10.8,2) -- (4.5,2);
\foreach \i in {5.4,6.3,7.2,8.1,9,9.85} {\draw[line width=1] (\i,3) -- (\i,2);}
\path (3.85,2.5) node {$\cdots$};  \path (4.95,2.5) node {$\underline{1}$};  \path (5.85,2.5) node {$\underline{2}$}; \path (6.75,2.5) node {$\underline{3}$}; \path (7.65,2.5) node {$\underline{5}$}; \path (8.55,2.5) node {$\underline{6}$}; \path (9.45,2.5) node {$\underline{9}$}; \path (10.35,2.5) node {$\underline{10}$}; 
\path (0.5,1.5) node {$\underline{4}$}; \path (1.5,1.5) node {$\underline{7}$}; \path (2.5,1.5) node {$\underline{8}$}; 
\end{tikzpicture} } 
\end{array} \!\! \in \SSPT(\lambda,\calW^3_{\!\mathcal{PR}_{10}}) 
\end{array}
\end{equation*}
When restricted to the subalgebra $\A_k$, Definition~\ref{def:dT} defines an action of $\A_k$ on set-partition tableaux $\T \in \SSPT(\lambda,\Wm)$. This leads to the following theorem, whose proof is identical to that of Theorem~\ref{thm:naturalrep}.

\begin{thm}
When restricted to any of the subalgebras $\A_k$, the action~\eqref{eqn:naturalaction} defines an analogue of Young's natural representation for $\A_k$. 
\end{thm}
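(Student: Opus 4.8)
The plan is to follow the proof of Theorem~\ref{thm:naturalrep} verbatim, adding only the observation that every ingredient of that argument is stable under restriction from $\Pb_k(n)$ to the subalgebra $\A_k$. First I would record the restricted form of the bijection~\eqref{eqn:sptbijection}. For $\lambda \in \Lambda^{\A_k}_n$ with $|\lambda^\ast| = m$, the same rule sending $\T \in \SPTb(\lambda,k)$ to a pair $(w,t)$ cuts down to a bijection $\SSPT(\lambda,\Wm) \leftrightarrow \Wm \times \SYT(\lambda^\ast)$. The key point is that the block-size and planarity conditions defining $\calA_k \subseteq \calP_k$ are exactly the conditions the example tableaux of Section~\ref{sec:setpartsubalg} impose on the content $\pi$: propagating singletons correspond to propagating identity edges, non-propagating pairs to the size-two blocks of $\B_k(n)$ and $\TL_k(n)$, and so on down the list. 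Hence the indexing set $\SSPT(\lambda,\Wm)$ matches the diagram basis $\Wm$ of $\WWm$ appearing in $\A^\lambda_k = \WWm \ot \Sb^{\lambda^\ast}_m$.

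Next I would verify that the action~\eqref{eqn:naturalaction}, with the acting diagram restricted to $\calA_k$, preserves the span of $\{\N_\T \mid \T \in \SSPT(\lambda,\Wm)\}$. This is a closure statement: because $\A_k$ is a subalgebra, the concatenation $d \circ \pi$ of $d \in \calA_k$ with the content $\pi$ of a restricted tableau again satisfies the defining block constraints of $\A_k$, so Definition~\ref{def:dT} returns either $0$ or a set-partition tableau of the same restricted type. Granting this, the argument of Theorem~\ref{thm:naturalrep} transfers word for word: the restricted action~\eqref{eqn:naturalaction} is the transport under the restricted bijection of the twisted conjugation action~\eqref{eqn:twistedaction} on $\A^\lambda_k$, with $\Sb^{\lambda^\ast}_m$ carrying Young's natural basis. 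The identification of the two actions rests on the same two bookkeeping facts isolated there: the twist $\sigma_{d,w}$ permuting the propagating blocks coincides with the permutation read off combinatorially in step~(a) of Definition~\ref{def:dT}, and the removed-component counts agree, $\ell(d,\T) = \ell(d,w)$. The vanishing conditions also coincide, since the criteria of Remark~\ref{rem:combinatorialaction} match $\pn(d \circ w \circ d^T) < m$.

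Finally, since~\eqref{eqn:twistedaction} realizes the irreducible module $\A^\lambda_k$ by Proposition~\ref{prop:twistedaction} together with Theorem~\ref{thm:HRirreducible}, and the restricted bijection is a vector-space isomorphism intertwining the two actions, I would conclude that~\eqref{eqn:naturalaction} makes $\CC\text{-span}\{\N_\T \mid \T \in \SSPT(\lambda,\Wm)\}$ into an $\A_k$-module isomorphic to $\A^\lambda_k$, which is the desired analogue of Young's natural representation. I expect the only genuine work to be the closure check of the second paragraph --- confirming case by case (Brauer and Temperley--Lieb pairs, rook and planar-rook singletons, Motzkin and rook--Brauer mixtures) that the combinatorial action cannot produce a block violating the constraints that single out $\A_k$. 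Everything else is inherited unchanged from the partition-algebra argument.
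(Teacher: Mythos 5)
Your proposal is correct and follows essentially the same route as the paper, which simply asserts that the proof is identical to that of Theorem~\ref{thm:naturalrep}; your additional observation that the only new content is the closure check (that $d \in \calA_k$ acting on a restricted tableau yields a restricted tableau of the same type) is exactly the point the paper leaves implicit.
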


\begin{rem}
When $|\lambda^\ast| = k$, the standard set-partition tableaux of shape $\lambda^\ast$ have $k$ propagating singletons and no non-propagating blocks, and thus are standard Young tableaux. Furthermore, the only diagrams which are nonzero on $\A^\lambda_k$ are permutation diagrams. Upon restriction to the subalgebra $\CC\Sb_k$, the module $\A^\lambda_k$ corresponds exactly to Young's natural representation. 
\end{rem}

\begin{examp} \label{ex:dTsubalgebras}
We give examples in the Brauer, Temperley-Lieb, and Rook monoid algebras. 
\begin{enumerate}[label=(\alph*)]
\item \emph{Brauer algebra.} In the example below $d\cdot \N_\T = n \N_{d(\T)},$ where $\N_{d(\T)}$ can be re-expressed in the basis of standard tableaux using Garnir relations as in Section~\ref{subsec:symmetricgroupmodules} (in this particular case, the Garnir relation is simple: $\N_{d(\T)} = \N_{\T'}$, where $\T'$ has $7$ and $8$ switched).
\begin{equation*}
\hspace{-0.75cm}
\begin{array}{c}
\scalebox{0.95}{
\begin{tikzpicture}[scale=0.55] 
\foreach \i in {1,...,10} 
{ \path (\i-1.35,5.4) coordinate (T\i); \path (\i-1.35,7.4) coordinate (t\i); \path (\i-1.35,5) coordinate (b\i);}
\path (0.25,1.5) coordinate (B1);
\path (5.15,2.75) coordinate (B2);
\path (6.5,2.65) coordinate (B3);
\path (1.75,1.55) coordinate (B4);
\path (7.75,2.55) coordinate (B5);
\path (0.8,0.6) coordinate (B6);
\path (2.85,1.45) coordinate (B7);
\path (-1.2,6.4) node {$d=$};
\draw[rounded corners=.15mm, fill=gray!25,draw=gray!25,line width=4.5pt]  (t1) -- (t10) -- (T10) -- (T1) -- cycle;
\draw[line width = 1.35] (t5) .. controls +(.1,-.5) and +(-.1,-.5) .. (t6);
\draw[line width = 1.35] (t1) .. controls +(.1,-1) and +(-.1,-1) .. (t4);
\draw[line width = 1.35] (t9) .. controls +(.1,-.5) and +(-.1,-.5) .. (t10);
\draw[line width = 1.35] (t7) .. controls +(.1,-.5) and +(-.1,.5) .. (T10);
\draw[line width = 1.35] (t3) .. controls +(.1,-1) and +(-.5,1.5) .. (T9);
\draw[line width = 1.35] (t2) .. controls +(.1,-.5) and +(-.1,.5) .. (T1);
\draw[line width = 1.35] (t8) .. controls +(.1,-.5) and +(-.1,.5) .. (T7);
\draw[line width = 1.35] (T4) .. controls +(.1,.5) and +(-.1,.5) .. (T5);
\draw[line width = 1.35] (T2) .. controls +(.1,.5) and +(-.1,.5) .. (T3);
\draw[line width = 1.35] (T6) .. controls +(.1,.7) and +(-.1,.7) .. (T8);
\path (-1,1.5) node {$\T=$};
\fill[gray!25, draw=black] (0,2) rectangle (4.5,3);
\draw[line width=1] (2,2) -- (2,1);
\foreach \i in {1,...,3} {\draw (\i,3) -- (\i,2);}
\draw[line width=1] (1,2) -- (1,0) -- (0,0) -- (0,2) -- (3,2) -- (3,1) -- (0,1);
\draw[line width=1] (4.5,2) -- (4.5,3) -- (8.4,3) -- (8.4,2) -- (4.5,2);
\draw[line width=1] (5.8,3) -- (5.8,2); \draw[line width=1] (7.1,3) -- (7.1,2); 
\path (3.85,2.5) node {$\cdots$}; \path (5.15,2.5) node {$1,\underline{3}$};  \path (6.45,2.5) node {$5,\underline{6}$};  \path (7.75,2.5) node {$4,\underline{8}$}; 
\path (0.5,1.5) node {$\underline{2}$}; \path (1.5,1.5) node {$\underline{7}$}; \path (2.5,1.5) node {$\underline{10}$}; 
\path (0.5,0.5) node {$\underline{9}$}; 
\begin{knot}[clip width=3,end tolerance=1pt]
\strand[line width = 1] (b7) .. controls +(0,-1) and +(0,1.5) .. (B4);
\strand[line width = 1] (b5) .. controls +(.1,-.7) and +(-.1,-.7) .. (b6);
\strand[line width = 1] (b3) .. controls +(.5,-1) and +(-.5,1) .. (B2);
\strand[line width = 1] (b8) .. controls +(.5,-1) and +(-.1,1) .. (B5);
\strand[line width = 1] (b6) .. controls +(.5,-1) and +(-.5,1) .. (B3);
\strand[line width = 1] (b4) .. controls +(.1,-1.2) and +(-.1,-1.5) .. (b8);
\strand[line width = 1] (b2) .. controls +(.1,-1) and +(-.1,1) .. (B1);
\strand[line width = 1] (b1) .. controls +(.1,-.9) and +(-.1,-.9) .. (b3);
\end{knot}
\begin{knot}[clip width=3]
\strand[line width = 1] (b10)  .. controls +(3,-5) and +(1,-0.5) .. (B7);
\strand[line width = 1] (b9)  .. controls +(4,-6) and +(1,-0.5) .. (B6);
\end{knot}
\foreach \i in {1,...,7} { \fill (B\i) circle (2.5pt); } 
\foreach \i in {1,...,10} {\draw[line width=1] (b\i) -- (T\i); \fill (b\i) circle (4pt); \fill (T\i) circle (4pt); \fill (t\i) circle (4pt); 
\draw  (t\i)  node[above=0.05cm]{${\scriptstyle \i}$}; } 
\end{tikzpicture} } 
\end{array} \hspace{-1.25cm} = \begin{array}{c}
\scalebox{0.95}{
\begin{tikzpicture}[scale=0.55] 
\fill[gray!25, draw=black] (0,2) rectangle (4.5,3);
\draw[line width=1] (2,2) -- (2,1);
\foreach \i in {1,...,3} {\draw (\i,3) -- (\i,2);}
\draw[line width=1] (1,2) -- (1,0) -- (0,0) -- (0,2) -- (3,2) -- (3,1) -- (0,1);
\draw[line width=1] (4.5,2) -- (4.5,3) -- (8.7,3) -- (8.7,2) -- (4.5,2);
\draw[line width=1] (5.8,3) -- (5.8,2); \draw[line width=1] (7.1,3) -- (7.1,2); 
\path (3.85,2.5) node {$\cdots$}; \path (5.15,2.5) node {$1,\underline{4}$};  \path (6.45,2.5) node {$5,\underline{6}$};  \path (7.9,2.5) node {$9,\underline{10}$}; 
\path (0.5,1.5) node {$\underline{2}$}; \path (1.5,1.5) node {$\underline{8}$}; \path (2.5,1.5) node {$\underline{7}$}; 
\path (0.5,0.5) node {$\underline{3}$}; 
\end{tikzpicture} } 
\end{array}\hspace{-0.25cm} = d(\T).
\end{equation*}

\item \emph{Temperley-Lieb algebra.} In the example below $d\cdot \N_\T = \N_{d(\T)}.$
\begin{equation*}
\hspace{-0.75cm}
\begin{array}{c}
\scalebox{0.95}{
\begin{tikzpicture}[scale=0.55] 
\foreach \i in {1,...,10} 
{ \path (\i-0.8,5.4) coordinate (T\i); \path (\i-0.8,7.4) coordinate (t\i); \path (\i-0.8,5) coordinate (b\i);}
\path (6.15,2.65) coordinate (B1);
\path (7.45,2.65) coordinate (B2);
\path (0.75,1.65) coordinate (B3);
\path (1.8,1.65) coordinate (B4);
\path (2.75,1.55) coordinate (B5);
\path (8.75,2.6) coordinate (B6);
\path (3.85,1.85) coordinate (B7);
\path (-0.65,6.4) node {$d=$};
\draw[rounded corners=.15mm, fill=gray!25,draw=gray!25,line width=4.5pt]  (t1) -- (t10) -- (T10) -- (T1) -- cycle;
\draw[line width = 1.35] (t1) .. controls +(.1,-.5) and +(-.1,-.5) .. (t2);
\draw[line width = 1.35] (t6) .. controls +(.1,-.5) and +(-.1,-.5) .. (t7);
\draw[line width = 1.35] (t8) .. controls +(.1,-.5) and +(-.1,-.5) .. (t9);
\draw[line width = 1.35] (T4) .. controls +(.1,.5) and +(-.1,.5) .. (T5);
\draw[line width = 1.35] (T3) .. controls +(.1,1) and +(-.1,1) .. (T6);
\draw[line width = 1.35] (T9) .. controls +(.1,.5) and +(-.1,.5) .. (T10);
\draw[line width = 1.35] (t3) .. controls +(.1,-1) and +(-.1,1) .. (T1);
\draw[line width = 1.35] (t4) .. controls +(.1,-1) and +(-.1,1) .. (T2);
\draw[line width = 1.35] (t5) .. controls +(.1,-1) and +(-.1,1) .. (T7);
\draw[line width = 1.35] (t10) .. controls +(.1,-1) and +(-.1,1) .. (T8);
\path (-1,2) node {$\T=$};
\fill[gray!25, draw=black] (0,2) rectangle (5.5,3);
\draw[line width=1] (2,2) -- (2,1);\draw[line width=1] (3,2) -- (3,1);
\foreach \i in {1,...,4} {\draw (\i,3) -- (\i,2);}
\draw[line width=1] (1,2) -- (1,1) -- (0,1) -- (0,2) -- (4,2) -- (4,1) -- (0,1);
\draw[line width=1] (5.5,2) -- (5.5,3) -- (9.4,3) -- (9.4,2) -- (5.5,2);
\draw[line width=1] (6.8,3) -- (6.8,2); \draw[line width=1] (8.1,3) -- (8.1,2); 
\path (4.85,2.5) node {$\cdots$}; \path (6.15,2.5) node {$2,\underline{3}$};  \path (7.45,2.5) node {$1,\underline{4}$};  \path (8.75,2.5) node {$8,\underline{9}$}; 
\path (0.5,1.5) node {$\underline{5}$}; \path (1.5,1.5) node {$\underline{6}$}; \path (2.5,1.5) node {$\underline{7}$}; \path (3.5,1.5) node {$\underline{10}$}; 
\begin{knot}[clip width=3,end tolerance=1pt, clip radius=3pt]
\strand[line width = 1] (b2) .. controls +(.1,-.7) and +(-.1,-.7) .. (b3);
\strand[line width = 1] (b8) .. controls +(.1,-.7) and +(-.1,-.7) .. (b9);
\strand[line width = 1] (b10) .. controls +(0,-1.5) and +(0,2) .. (B7);
\strand[line width = 1] (b7)  .. controls +(0,-1) and +(0,1.5) .. (B5);
\strand[line width = 1] (b6) .. controls +(0,-1) and +(0,1.5) .. (B4);
\strand[line width = 1] (b5) .. controls +(0,-1) and +(0,1.5) .. (B3);
\strand[line width = 1] (b4) .. controls +(.5,-1) and +(-.5,1) .. (B2);
\strand[line width = 1] (b9)  .. controls +(.1,-1) and +(.1,0.5) .. (B6);
\strand[line width = 1] (b3) .. controls +(.1,-1) and +(-.1,1) .. (B1);
\strand[line width = 1] (b1) .. controls +(.1,-1.2) and +(-.1,-1.2) .. (b4);
\end{knot}
\foreach \i in {1,...,7} { \fill (B\i) circle (2.5pt); } 
\foreach \i in {1,...,10} {\draw[line width=1] (b\i) -- (T\i); }
\foreach \i in {1,...,10} {
\fill (b\i) circle (4pt); \fill (T\i) circle (4pt); \fill (t\i) circle (4pt); 
\draw  (t\i)  node[above=0.05cm]{${\scriptstyle \i}$}; } 
\end{tikzpicture} } 
\end{array} = \begin{array}{c}
\scalebox{0.95}{
\begin{tikzpicture}[scale=0.55] 
\fill[gray!25, draw=black] (0,2) rectangle (5.5,3);
\draw[line width=1] (2,2) -- (2,1);\draw[line width=1] (3,2) -- (3,1);
\foreach \i in {1,...,4} {\draw (\i,3) -- (\i,2);}
\draw[line width=1] (1,2) -- (1,1) -- (0,1) -- (0,2) -- (4,2) -- (4,1) -- (0,1);
\draw[line width=1] (5.5,2) -- (5.5,3) -- (9.4,3) -- (9.4,2) -- (5.5,2);
\draw[line width=1] (6.8,3) -- (6.8,2); \draw[line width=1] (8.1,3) -- (8.1,2); 
\path (4.85,2.5) node {$\cdots$}; \path (6.15,2.5) node {$1,\underline{2}$};  \path (7.45,2.5) node {$6,\underline{7}$};  \path (8.75,2.5) node {$8,\underline{9}$}; 
\path (0.5,1.5) node {$\underline{3}$}; \path (1.5,1.5) node {$\underline{4}$}; \path (2.5,1.5) node {$\underline{5}$}; \path (3.5,1.5) node {$\underline{10}$}; 
\end{tikzpicture} } 
\end{array}\hspace{-0.25cm} = d(\T).
\end{equation*}

\item \emph{Rook monoid algebra.} In the example below $d\cdot \N_\T =n^3 \N_{d(\T)}.$
\begin{equation*}
\hspace{-0.75cm}
\begin{array}{c}
\scalebox{0.95}{
\begin{tikzpicture}[scale=0.55] 
\foreach \i in {1,...,10} 
{ \path (\i-0.8,5.4) coordinate (T\i); \path (\i-0.8,7.4) coordinate (t\i);  \path (\i-0.8,5) coordinate (b\i);}
\path (0.75,1.55) coordinate (B1);
\path (1.8,1.5) coordinate (B2);
\path (2.8,1.55) coordinate (B8);
\path (0.2,0.75) coordinate (B6);
\path (1.85,0.85) coordinate (B10);
\path (5.25,2.65) coordinate (B3);
\path (6.15,2.65) coordinate (B4);
\path (7,2.65) coordinate (B5);
\path (7.9,2.65) coordinate (B7);
\path (8.85,2.6) coordinate (B9);
\path (-0.65,6.4) node {$d=$};
\draw[rounded corners=.15mm, fill=gray!25,draw=gray!25,line width=4.5pt]  (t1) -- (t10) -- (T10) -- (T1) -- cycle;
\draw[line width = 1.35] (t2) -- (T1);
\draw[line width = 1.35] (t4) -- (T2);
\draw[line width = 1.35] (t7) -- (T3);
\draw[line width = 1.35] (t5) -- (T5);
\draw[line width = 1.35] (t9) -- (T6);
\draw[line width = 1.35] (t6) -- (T8);
\draw[line width = 1.35] (t10) -- (T10);
\path (-1,1.5) node {$\T=$};
\fill[gray!25, draw=black] (0,2) rectangle (4.5,3);
\foreach \i in {1,...,3} {\draw (\i,3) -- (\i,2);}
\draw[line width=1] (1,2) -- (1,0) -- (0,0) -- (0,2) -- (3,2) -- (3,1) -- (0,1);
\draw[line width=1] (2,2) -- (2,0) -- (1,0);
\draw[line width=1] (4.5,2) -- (4.5,3) -- (9,3) -- (9,2) -- (4.5,2);
\foreach \i in {5.4,6.3,7.2,8.1} {\draw[line width=1] (\i,3) -- (\i,2);}
\path (3.85,2.5) node {$\cdots$};  \path (4.95,2.5) node {$\underline{3}$};  \path (5.85,2.5) node {$\underline{4}$}; \path (6.75,2.5) node {$\underline{5}$}; \path (7.65,2.5) node {$\underline{7}$}; \path (8.55,2.5) node {$\underline{9}$}; 
\path (0.5,1.5) node {$\underline{1}$}; \path (1.5,1.5) node {$\underline{2}$};  \path (2.5,1.5) node {$\underline{8}$}; 
\path (0.5,0.5) node {$\underline{6}$};  \path (1.5,0.5) node {$\underline{10}$}; 
\begin{knot}[clip width=3,end tolerance=1pt]
\strand[line width = 1] (b2) .. controls +(-.1,-0.5) and +(-.1,0.5) .. (B2);
\strand[line width = 1] (b8) .. controls +(-.1,-1.5) and +(-.1,2) .. (B8);
\strand[line width = 1] (b1) .. controls +(.1,-0.5) and +(-.1,0.5) .. (B1);
\strand[line width = 1] (b6)  .. controls +(-.1,-2.25) and +(-2.5,5.25) .. (B6);
\strand[line width = 1] (b3) .. controls +(0,-1) and +(0,1.5) .. (B3);
\strand[line width = 1] (b4) .. controls +(0,-1) and +(0,1.5) .. (B4);
\strand[line width = 1] (b5)  .. controls +(0,-1) and +(0,1.5) .. (B5);
\strand[line width = 1] (b7) .. controls +(0,-1) and +(0,1.5) .. (B7);
\strand[line width = 1] (b9) .. controls +(0,-1) and +(0,1.5) .. (B9);
\strand[line width = 1] (b10)  .. controls +(1,-2) and +(8,-1.5) .. (B10);
\end{knot}
\foreach \i in {1,...,10} { \fill (B\i) circle (2.5pt); } 
\foreach \i in {1,...,10} {\draw[line width=1] (b\i) -- (T\i); \fill (b\i) circle (4pt); \fill (T\i) circle (4pt); \fill (t\i) circle (4pt); 
\draw  (t\i)  node[above=0.05cm]{${\scriptstyle \i}$}; } 
\end{tikzpicture} } 
\end{array} \hspace{-.25cm} = \begin{array}{c}
\scalebox{0.95}{
\begin{tikzpicture}[scale=0.55] 
\fill[gray!25, draw=black] (0,2) rectangle (4.5,3);
\foreach \i in {1,...,3} {\draw (\i,3) -- (\i,2);}
\draw[line width=1] (1,2) -- (1,0) -- (0,0) -- (0,2) -- (3,2) -- (3,1) -- (0,1);
\draw[line width=1] (2,2) -- (2,0) -- (1,0);
\draw[line width=1] (4.5,2) -- (4.5,3) -- (9,3) -- (9,2) -- (4.5,2);
\foreach \i in {5.4,6.3,7.2,8.1} {\draw[line width=1] (\i,3) -- (\i,2);}
\path (3.85,2.5) node {$\cdots$};  \path (4.95,2.5) node {$\underline{1}$};  \path (5.85,2.5) node {$\underline{3}$}; \path (6.75,2.5) node {$\underline{5}$}; \path (7.65,2.5) node {$\underline{7}$}; \path (8.55,2.5) node {$\underline{8}$}; 
\path (0.5,1.5) node {$\underline{2}$}; \path (1.5,1.5) node {$\underline{4}$};  \path (2.5,1.5) node {$\underline{6}$}; 
\path (0.5,0.5) node {$\underline{9}$};  \path (1.5,0.5) node {$\underline{10}$}; 
\end{tikzpicture} } 
\end{array}\hspace{-0.25cm} = d(\T).
\end{equation*}
\end{enumerate}
\end{examp}

The action of the generator $\mathfrak e_i$ on set-partition tableaux takes a nice form that can be verified using Theorem \ref{thm:generatorsactionSPT} and the relation $\mathfrak e_i = \mathfrak b_i \mathfrak p_i \mathfrak p_{i+1} \mathfrak b_i$,
\begin{equation}
\mathfrak e_i \cdot \N_\T = \begin{dcases}
n \N_{\T} & \text{if $\{i,i+1\}$ is a non-propagating block in $\T$, }\\
0 & \!\!\begin{array}{l} \text{if $i$ and $i+1$ are in propagating blocks in $\T$, or if $\{i\}$ and} \\
					 \text{$\{i+1\}$ are singleton blocks in $\T$ with one propagating,} \end{array} \\
n \N_{\mathfrak e_i(\T)} & \text{if $\{i\}$ and $\{i+1\}$ are non-propagating singleton blocks in $\T$, }\\ 
\N_{\mathfrak e_i(\T)} & \text{otherwise, } 
\end{dcases}
\end{equation}
where $\mathfrak e_i(\T)$ is the set-partition tableau in $\SPTb(\lambda,k)$ obtained from $\T$ by removing $i$ and $i+1$ from their blocks, making $\{i,i+1\}$ into a non-propagating block,  joining the remaining elements from the blocks which contained $i$ and $i+1$, and standardizing the first row.  The resulting block becomes propagating if one of the original blocks was propagating, and otherwise stays non-propagating. 

\begin{examp}  
Below are examples of the action of $\mathfrak e_i$ on set-partition tableaux of Brauer and rook-Brauer type. Consider the following set-partition tableau $\T$ in $\SSPT(\lambda, \calW^4_{\!\mathcal{B}_{10}})$, where~$\lambda = [n-4,3,1]$, 
\begin{align*}
\mathfrak e_7 \left(
\begin{array}{c}
\scalebox{0.95}{
\begin{tikzpicture}[scale=0.55] 
\fill[gray!25, draw=black] (0,2) rectangle (4.5,3);
\draw[line width=1] (2,2) -- (2,1);
\foreach \i in {1,...,3} {\draw (\i,3) -- (\i,2);}
\draw[line width=1] (1,2) -- (1,0) -- (0,0) -- (0,2) -- (3,2) -- (3,1) -- (0,1);
\draw[line width=1] (4.5,2) -- (4.5,3) -- (8.4,3) -- (8.4,2) -- (4.5,2);
\draw[line width=1] (5.8,3) -- (5.8,2); \draw[line width=1] (7.1,3) -- (7.1,2); 
\path (3.85,2.5) node {$\cdots$}; \path (5.15,2.5) node {$1,\underline{3}$};  \path (6.45,2.5) node {$5,\underline{6}$};  \path (7.75,2.5) node {$4,\underline{8}$}; 
\path (0.5,1.5) node {$\underline{2}$}; \path (1.5,1.5) node {$\underline{7}$}; \path (2.5,1.5) node {$\underline{10}$}; 
\path (0.5,0.5) node {$\underline{9}$}; 
\end{tikzpicture} } 
\end{array} \right) &= 
\begin{array}{c}
\scalebox{0.95}{
\begin{tikzpicture}[scale=0.55] 
\fill[gray!25, draw=black] (0,2) rectangle (4.5,3);
\draw[line width=1] (2,2) -- (2,1);
\foreach \i in {1,...,3} {\draw (\i,3) -- (\i,2);}
\draw[line width=1] (1,2) -- (1,0) -- (0,0) -- (0,2) -- (3,2) -- (3,1) -- (0,1);
\draw[line width=1] (4.5,2) -- (4.5,3) -- (8.4,3) -- (8.4,2) -- (4.5,2);
\draw[line width=1] (5.8,3) -- (5.8,2); \draw[line width=1] (7.1,3) -- (7.1,2); 
\path (3.85,2.5) node {$\cdots$}; \path (5.15,2.5) node {$1,\underline{3}$};  \path (6.45,2.5) node {$5,\underline{6}$};  \path (7.75,2.5) node {$7,\underline{8}$}; 
\path (0.5,1.5) node {$\underline{2}$}; \path (1.5,1.5) node {$\underline{4}$}; \path (2.5,1.5) node {$\underline{10}$}; 
\path (0.5,0.5) node {$\underline{9}$}; 
\end{tikzpicture} } 
\end{array}.
\end{align*}
Since $9$ and $10$ are distinct propagating singletons, $\mathfrak e_9$ acts as zero on $\T$. Since $\{5,6\}$ is a non-propagating block, $\mathfrak e_5 \cdot \N_\T = n \N_\T$. When $\mathfrak e_7$ acts on $\T$, $\{7,8\}$ becomes a non-propagating block and $4$ becomes a propagating singleton, so that $\mathfrak e_7 \cdot \N_\T =  \N_{\mathfrak e_7(\T)}$.

Consider the following set-partition tableau in $\SSPT(\lambda, \calW^3_{\!\mathcal{RB}_{10}})$, where $\lambda = [n-3,2,1]$,
\begin{align*}
\mathfrak e_5 \left(
\begin{array}{c}
\scalebox{0.95}{
\begin{tikzpicture}[scale=0.55] 
\fill[gray!25, draw=black] (0,2) rectangle (3.5,3);
\foreach \i in {1,...,2} {\draw (\i,3) -- (\i,2);}
\draw[line width=1] (1,2) -- (1,0) -- (0,0) -- (0,2) -- (2,2) -- (2,1) -- (0,1);
\draw[line width=1] (3.5,2) -- (3.5,3) -- (9.1,3) -- (9.1,2) -- (3.5,2);
\foreach \i in {4.3,5.7,6.6,7.5} {\draw[line width=1] (\i,3) -- (\i,2);}
\path (2.85,2.5) node {$\cdots$}; \path (3.9,2.5) node {$\underline{2}$};  \path (4.95,2.5) node {$1,\underline{4}$};  \path (6.15,2.5) node {$\underline{5}$}; \path (7.05,2.5) node {$\underline{6}$}; \path (8.3,2.5) node {$8,\underline{10}$}; 
\path (0.5,1.5) node {$\underline{3}$}; \path (1.5,1.5) node {$\underline{9}$}; 
\path (0.5,0.5) node {$\underline{7}$}; 
\end{tikzpicture} } 
\end{array} \right) &= 
\begin{array}{c}
\scalebox{0.95}{
\begin{tikzpicture}[scale=0.55] 
\fill[gray!25, draw=black] (0,2) rectangle (3.5,3);
\foreach \i in {1,...,2} {\draw (\i,3) -- (\i,2);}
\draw[line width=1] (1,2) -- (1,0) -- (0,0) -- (0,2) -- (2,2) -- (2,1) -- (0,1);
\draw[line width=1] (3.5,2) -- (3.5,3) -- (8.7,3) -- (8.7,2) -- (3.5,2);
\foreach \i in {4.3,5.7,7.1} {\draw[line width=1] (\i,3) -- (\i,2);}
\path (2.85,2.5) node {$\cdots$}; \path (3.9,2.5) node {$\underline{2}$};  \path (4.95,2.5) node {$1,\underline{4}$};\path (6.4,2.5) node {$5,\underline{6}$}; \path (7.9,2.5) node {$8,\underline{10}$}; 
\path (0.5,1.5) node {$\underline{3}$}; \path (1.5,1.5) node {$\underline{9}$}; 
\path (0.5,0.5) node {$\underline{7}$}; 
\end{tikzpicture} } 
\end{array}.
\end{align*}
Since $2$ is a non-propagating singleton and $3$ is a propagating singleton, $\mathfrak e_2$ acts as zero on $\T$. The same is true for $\mathfrak e_6$. When $\mathfrak e_5$ acts on $\T$, $\{5,6\}$ becomes a non-propagating block, and $\mathfrak e_5 \cdot \N_\T = n \N_{\mathfrak e_5(\T)}$.
\end{examp}


\section{Characters}
\label{sec:characters}
\noindent
As an application of the explicit construction of the simple module $\A^\lambda_k$, we provide a closed form for the irreducible characters of the partition algebra and its diagram subalgebras. If $d \in \calA_k$, then taking the trace in the diagram basis, with the action defined in~\eqref{eqn:twistedaction}, gives the following result. 

\begin{thm} \label{thm:char1} \setcounter{equation}{\value{equation}-1}
Let $d\in \calA_k$ be a basis diagram for $\A_k$ and let $\lambda \in \Lambda^{\A_k}_{n}$ with $|\lambda^\ast| = m$. The value of the irreducible character $\chi^\lambda_{\A_k}$ on the diagram $d \in \calA_k$ is given by 
\begin{subequations}
\begin{equation}
\chi^\lambda_{\A_k}(d) = \sum_{w \in \calF^{\,m}_{\!\calA_k}(d)} n^{\ell(d,w)} \chi^{\lambda^\ast}_{\Sb_m}(\sigma_{d,w}), \label{eqn:char1}
\end{equation}
where $n^{\ell(d,w)}$ is the number of connected components removed in the concatenation of $d$ and $w$, $\sigma_{d,w}$ is the twist of $d\circ w\circ d^T$, and $\calF^{\,m}_{\!\calA_k}(d)$ is the set of diagrams in $\Wm$ fixed under conjugation by $d$,
\begin{equation}
\calF^{\,m}_{\!\calA_k}(d) := \left\{ w \in \Wm \ \middle| \ d\circ w \circ d^T = w \right\}.
\end{equation}
\end{subequations}
\end{thm}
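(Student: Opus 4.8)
The plan is to compute the character $\chi^\lambda_{\A_k}(d)$ directly as the trace of the operator by which $d$ acts on the module $\A^\lambda_k = \WWm \ot \Sb^{\lambda^\ast}_m$, working in the basis $\{w \ot \n_t \mid w \in \Wm,\ t \in \SYT(\lambda^\ast)\}$. Since the trace of an operator is independent of the chosen basis, this is legitimate, and this basis is especially convenient because the action \eqref{eqn:twistedaction} sends each basis vector to a scalar multiple of a single diagram tensored with a (possibly nonstandard) natural vector. Writing $\chi^\lambda_{\A_k}(d) = \sum_{w \in \Wm}\sum_{t \in \SYT(\lambda^\ast)} c_{w,t}$, where $c_{w,t}$ is the coefficient of $w \ot \n_t$ in $d\cdot(w\ot \n_t)$, reduces the problem to identifying which basis vectors contribute to the diagonal.

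Next I would determine exactly when $w \ot \n_t$ contributes to the diagonal. By \eqref{eqn:twistedaction}, if $\pn(d\circ w\circ d^T) < m$ then $d\cdot(w\ot \n_t)=0$, and if $\pn(d\circ w\circ d^T) = m$ but $d\circ w\circ d^T \neq w$, then the image is supported on the diagram $d\circ w\circ d^T$, which differs from $w$, so the $w\ot \n_t$-coefficient is again zero. Hence the only diagrams $w$ that can contribute are those with $d\circ w\circ d^T = w$, which is precisely the condition defining $\calF^{\,m}_{\!\calA_k}(d)$. For such $w$ one has $\pn(d\circ w\circ d^T)=\pn(w)=m$ automatically, so the first case of \eqref{eqn:twistedaction} applies and the twist $\sigma_{d,w}\in\Sb_m$ is well defined.

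Finally, for $w \in \calF^{\,m}_{\!\calA_k}(d)$ the action is $d\cdot(w\ot \n_t) = n^{\ell(d,w)}\,(w \ot \sigma_{d,w}\cdot \n_t)$, so $c_{w,t} = n^{\ell(d,w)}$ times the coefficient of $\n_t$ in $\sigma_{d,w}\cdot \n_t$. Summing over $t \in \SYT(\lambda^\ast)$ collapses the inner sum into the trace of $\sigma_{d,w}$ acting on $\Sb^{\lambda^\ast}_m$, namely $\chi^{\lambda^\ast}_{\Sb_m}(\sigma_{d,w})$, yielding
\begin{equation*}
\chi^\lambda_{\A_k}(d) = \sum_{w \in \calF^{\,m}_{\!\calA_k}(d)} n^{\ell(d,w)}\,\chi^{\lambda^\ast}_{\Sb_m}(\sigma_{d,w}),
\end{equation*}
as claimed. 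The one point requiring care—rather than a genuine obstacle—is the passage from the diagonal sum over $\SYT(\lambda^\ast)$ to the symmetric group character: even though $\sigma_{d,w}\cdot \n_t = \n_{\sigma_{d,w}(t)}$ need not be a standard basis vector, the sum $\sum_t [\,\n_t\,](\sigma_{d,w}\cdot \n_t)$ is exactly $\mathrm{tr}_{\Sb^{\lambda^\ast}_m}(\sigma_{d,w})$ because $\{\n_t \mid t\in\SYT(\lambda^\ast)\}$ is Young's natural basis of the irreducible $\Sb_m$-module $\Sb^{\lambda^\ast}_m$; invoking this, together with the basis-independence of the trace, completes the argument.
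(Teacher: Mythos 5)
Your proposal is correct and is exactly the argument the paper intends: the paper's justification is the single remark that the character is obtained by ``taking the trace in the diagram basis, with the action defined in~\eqref{eqn:twistedaction},'' and you have simply spelled out the three steps that remark compresses (only $w$ with $d\circ w\circ d^T=w$ contribute to the diagonal, the prefactor $n^{\ell(d,w)}$ comes out, and the inner sum over $t\in\SYT(\lambda^\ast)$ is the trace of $\sigma_{d,w}$ on $\Sb_m^{\lambda^\ast}$ even when $\sigma_{d,w}(t)$ is nonstandard). No gaps; your handling of the nonstandard-tableau subtlety is the right one.
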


Let $\gamma_r$ be the $r$-cycle $(r,r-1,\ldots,1)$ in $\Sb_r \subseteq \Pb_r(n)$, and for a partition $\kappa = [\kappa_1, \kappa_2, \ldots, \kappa_\ell]$ define
\begin{equation} \label{eqn:cyclediagram}
\gamma_\kappa = \gamma_{\kappa_1} \otimes \gamma_{\kappa_2} \otimes \cdots \otimes \gamma_{\kappa_\ell},
\end{equation}
where here the tensor product denotes the juxtaposition of diagrams.  It follows from the basic construction (see Section~\ref{sec:BasicConstruction} and \cite[Lem.~2.8]{HR2}) that the irreducible characters of $\A_k$ are completely determined by their values on diagrams  $\gamma_\kappa \e_|\kappa|$, where 
\begin{subequations}
\begin{align}
\e = \frac{1}{n}\mathfrak{e}_{1} &= \frac{1}{n}\!\!
\begin{array}{c}
\scalebox{0.6}{
\begin{tikzpicture}[scale=.55,line width=1.5pt] 
 \path (0,2) coordinate (T1); \path (0,0) coordinate (B1); 
  \path (0.9,2) coordinate (T2); \path (0.9,0) coordinate (B2); 
\filldraw[fill=gray!25,draw=gray!25,line width=5pt]  (T1) -- (T2) -- (B2) -- (B1) -- (T1);
\draw (T1) .. controls +(.1,-.4) and +(-.1,-.4) .. (T2) ;
\draw (B1) .. controls +(.1,.4) and +(-.1,.4) .. (B2) ;
 \fill (T1) circle (4.5pt);\fill (B1) circle (4.5pt);
 \fill (T2) circle (4.5pt);\fill (B2) circle (4.5pt);
\end{tikzpicture} }
\end{array} \text{ and $|\kappa| + 2s = k$ for  $\B_k(n)$ and $\TL_k(n)$}, \\
\e = \frac{1}{n}\mathfrak{p}_{1} &= \frac{1}{n}
\begin{array}{c}
\scalebox{0.6}{
\begin{tikzpicture}[scale=.55,line width=1.5pt] 
 \path (0,2) coordinate (T1); \path (0,0) coordinate (B1); 
\filldraw[fill=gray!25,draw=gray!25,line width=5pt]  (T1) -- (T1) -- (B1) -- (B1) -- (T1);
 \fill (T1) circle (4.5pt);\fill (B1) circle (4.5pt);
\end{tikzpicture} }
\end{array} \text{ and $|\kappa| + s = k$ for $\Pb_k(n)$ and its other diagram subalgebras}.
\end{align}
\end{subequations}
Thus, the diagrams $\gamma_\kappa \ot \e^{\otimes s}$ are conjugacy class analogs for $\A_k$.
For example, if $k = 18$ and $\kappa = [6,5,2,1]\vdash 14$, then 
\begin{align*}
\gamma_{\kappa} \otimes \mathfrak{p}_1^{\otimes 4} =\frac{1}{n^4} \!\!
\begin{array}{c}
\scalebox{0.65}{
\begin{tikzpicture}[scale=.55,line width=1.5pt] 
\foreach \i in {1,...,18} 
{ \path (0.75*\i,2) coordinate (T\i); \path (0.75*\i,0) coordinate (B\i); } 
\filldraw[fill=gray!25,draw=gray!25,line width=4pt]  (T1) -- (T18) -- (B18) -- (B1) -- (T1);
\draw (B1) .. controls +(.1,.4) and +(-.1,-.4) .. (T6) ;
\draw (B6) -- (T5) ;
\draw (B5) -- (T4) ;
\draw (B4) -- (T3) ;
\draw (B3) -- (T2) ;
\draw (B2) -- (T1) ;
\draw (B7) .. controls +(.1,.4) and +(-.1,-.4) .. (T11) ;
\draw (B11) -- (T10) ;
\draw (B10) -- (T9) ;
\draw (B9) -- (T8) ;
\draw (B8) -- (T7) ;
\draw (B12) .. controls +(.1,.4) and +(-.1,-.4) .. (T13) ;
\draw (B14) -- (T14) ;
\draw (B13) -- (T12) ;
\foreach \i in {1,...,18} 
{ \fill (T\i) circle (4.5pt);\fill (B\i) circle (4.5pt); } 
\end{tikzpicture} }
\end{array} \text{ and } \gamma_{\kappa} \otimes \mathfrak{e}_1^{\otimes 2} =\frac{1}{n^2} \!\! 
\begin{array}{c}
\scalebox{0.65}{
\begin{tikzpicture}[scale=.55,line width=1.5pt] 
\foreach \i in {1,...,18} 
{ \path (0.75*\i,2) coordinate (T\i); \path (0.75*\i,0) coordinate (B\i); } 
\filldraw[fill=gray!25,draw=gray!25,line width=4pt]  (T1) -- (T18) -- (B18) -- (B1) -- (T1);
\draw (B1) .. controls +(.1,.4) and +(-.1,-.4) .. (T6) ;
\draw (B6) -- (T5) ;
\draw (B5) -- (T4) ;
\draw (B4) -- (T3) ;
\draw (B3) -- (T2) ;
\draw (B2) -- (T1) ;
\draw (B7) .. controls +(.1,.4) and +(-.1,-.4) .. (T11) ;
\draw (B11) -- (T10) ;
\draw (B10) -- (T9) ;
\draw (B9) -- (T8) ;
\draw (B8) -- (T7) ;
\draw (B12) .. controls +(.1,.4) and +(-.1,-.4) .. (T13) ;
\draw (B14) -- (T14) ;
\draw (B13) -- (T12) ;
\draw (T15) .. controls +(.1,-.4) and +(-.1,-.4) .. (T16) ;
\draw (B15) .. controls +(.1,.4) and +(-.1,.4) .. (B16) ;
\draw (T17) .. controls +(.1,-.4) and +(-.1,-.4) .. (T18) ;
\draw (B17) .. controls +(.1,.4) and +(-.1,.4) .. (B18) ;
\foreach \i in {1,...,18} 
{ \fill (T\i) circle (4.5pt);\fill (B\i) circle (4.5pt); } 
\end{tikzpicture} }
\end{array}.
\end{align*}
are conjugacy class representatives in $\Pb_{18}(n)$ and $\B_{18}(n)$, respectively. If the algebra $\A_k$ is planar, then the only partition $\kappa$ used is $\kappa = [1, \ldots, 1]$ so that $\gamma_\kappa$ is the identity diagram.
Furthermore from \cite[Eq.~2.17, Eq.~2.22]{HR2} and \cite[Cor.~4.2.3]{Ha}, the irreducible characters satisfy
\begin{equation}
\chi_{\A_k}^\lambda(\gamma_\kappa \ot \e^{\ot s}) = \begin{cases} 0 & \text{ if $|\kappa| < |\lambda^\ast|$, } \\ 
 \chi_{\A_{|\kappa|}}^\lambda(\gamma_\kappa) & \text{ if $|\kappa| \geq |\lambda^\ast|$. }
\end{cases} \label{eqn:charformulageneral1} 
\end{equation}
It follows from ~\ref{eqn:charformulageneral1}  that characters of $\A_k$ are determined by the characters of $\A_{k-1}$ and the values  $\chi^\lambda_{\A_k} (\gamma_\kappa)$ for $\kappa\vdash k$. When $\kappa\vdash k$, Theorem~\ref{thm:char1} simplifies to the following.

\begin{cor} \label{cor:char1} \setcounter{equation}{\value{equation}-1}
For $\lambda \in \Lambda^{\A_k}_{n}$ such that $|\lambda^\ast| = m$ and  $\kappa \vdash k$, we have
\begin{subequations}
\begin{align}
\chi^\lambda_{\A_k}(\gamma_\kappa) = \sum_{\mu \vdash m} \F_{\A_k}^{\mu,\kappa}\,\chi^{\lambda^\ast}_{\Sb_m}(\gamma_\mu), \label{eqn:char2}
\end{align}
where $\F_{\A_k}^{\mu,\kappa} := |\calF^{\,\mu}_{\!\calA_k}(\kappa)|$ is the cardinality of the following set,
\begin{equation} \label{eqn:fixedpoints}
\calF^{\,\mu}_{\!\calA_k}(\kappa) := \left\{ w \in \calW^{\calA_k}_m \ \middle| \ \gamma_\kappa \circ w \circ \gamma_\kappa^T = w, \ \sigma_{\gamma_\kappa,w} \in \Sb_m \text{ has cycle type } \mu \right\} \subseteq \calF^{\,m}_{\!\calA_k}(\gamma_\kappa).
\end{equation}
\end{subequations}
\end{cor}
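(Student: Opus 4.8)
The plan is to derive Corollary~\ref{cor:char1} as a direct specialization of Theorem~\ref{thm:char1} to the diagram $d=\gamma_\kappa$, followed by a bookkeeping step that groups the fixed symmetric $m$-diagrams according to the cycle type of their associated twist. First I would apply Theorem~\ref{thm:char1} verbatim with $d=\gamma_\kappa$, which gives
\[
\chi^\lambda_{\A_k}(\gamma_\kappa) = \sum_{w \in \calF^{\,m}_{\!\calA_k}(\gamma_\kappa)} n^{\ell(\gamma_\kappa,w)}\,\chi^{\lambda^\ast}_{\Sb_m}(\sigma_{\gamma_\kappa,w}).
\]

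The first substantive observation is that the power of $n$ is always trivial here. Since $\kappa \vdash k$, the diagram $\gamma_\kappa$ is a permutation diagram (and for the planar algebras $\gamma_\kappa$ is simply the identity), so $\pn(\gamma_\kappa)=k$ and, as an element of $\calA_k$, it realizes a bijection between its top and bottom rows. Stacking $\gamma_\kappa$ above any symmetric $m$-diagram $w$ therefore produces no block lying entirely in the middle row, so no connected components are deleted in the concatenation $\gamma_\kappa \circ w$; that is, $\ell(\gamma_\kappa,w)=0$ and $n^{\ell(\gamma_\kappa,w)}=1$ for every $w \in \calF^{\,m}_{\!\calA_k}(\gamma_\kappa)$. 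This removes the only factor that could distinguish $\chi^\lambda_{\A_k}(\gamma_\kappa)$ from a pure sum of symmetric group character values.

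Next I would exploit that $\chi^{\lambda^\ast}_{\Sb_m}$ is a class function on $\Sb_m$: its value on the twist $\sigma_{\gamma_\kappa,w}$ depends only on the cycle type of $\sigma_{\gamma_\kappa,w}$. I would therefore partition $\calF^{\,m}_{\!\calA_k}(\gamma_\kappa)$ according to the cycle type $\mu \vdash m$ of the twist. Every $w$ in the fixed set satisfies $\gamma_\kappa \circ w \circ \gamma_\kappa^T = w$, a symmetric $m$-diagram of rank $m$, so the twist $\sigma_{\gamma_\kappa,w}\in\Sb_m$ is well defined and has some cycle type $\mu \vdash m$. By the defining condition in~\eqref{eqn:fixedpoints}, the block of this partition indexed by $\mu$ is precisely $\calF^{\,\mu}_{\!\calA_k}(\kappa)$, whence $\calF^{\,m}_{\!\calA_k}(\gamma_\kappa) = \bigsqcup_{\mu \vdash m} \calF^{\,\mu}_{\!\calA_k}(\kappa)$. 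On each block the summand equals the constant $\chi^{\lambda^\ast}_{\Sb_m}(\gamma_\mu)$, since $\gamma_\mu$ represents the class of cycle type $\mu$. Collecting terms yields
\[
\chi^\lambda_{\A_k}(\gamma_\kappa) = \sum_{\mu \vdash m} \bigl|\calF^{\,\mu}_{\!\calA_k}(\kappa)\bigr|\,\chi^{\lambda^\ast}_{\Sb_m}(\gamma_\mu) = \sum_{\mu \vdash m} \F_{\A_k}^{\mu,\kappa}\,\chi^{\lambda^\ast}_{\Sb_m}(\gamma_\mu),
\]
which is exactly~\eqref{eqn:char2}.

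Because the corollary is a specialization of the already-established Theorem~\ref{thm:char1}, there is no deep obstacle; the only point demanding genuine (if brief) verification is the vanishing $\ell(\gamma_\kappa,w)=0$, which rests on the elementary diagram-calculus fact that a permutation diagram deletes no middle components upon concatenation. The remaining work is the standard device of rewriting a class-function sum as a weighted sum over conjugacy classes, with the weights counting the fixed diagrams realizing each class; here the only care needed is to match the partition of $\calF^{\,m}_{\!\calA_k}(\gamma_\kappa)$ exactly with the sets $\calF^{\,\mu}_{\!\calA_k}(\kappa)$ as defined in~\eqref{eqn:fixedpoints}.
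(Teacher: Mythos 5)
Your proposal is correct and follows essentially the same route as the paper: specialize Theorem~\ref{thm:char1} to $d=\gamma_\kappa$, note $n^{\ell(\gamma_\kappa,w)}=1$, and regroup the sum over fixed diagrams by the cycle type of the twist using that $\chi^{\lambda^\ast}_{\Sb_m}$ is a class function. Your justification of $\ell(\gamma_\kappa,w)=0$ (a permutation diagram deletes no middle components) is just a spelled-out version of the step the paper dismisses as ``clearly.''
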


\begin{proof}
Clearly $n^{\ell(\gamma_\kappa,w)} = 1$ for all $\gamma_\kappa$ and $w$, and on these special elements the sum~\eqref{eqn:char1} becomes 
\begin{align*}
\chi^\lambda_{\A_k}(\gamma_\kappa) = \sum_{w \in \calF^{\,m}_{\!\calA_k}(\gamma_\kappa)} \chi^{\lambda^\ast}_{\Sb_m}(\sigma_{\gamma_\kappa,w}) &= \sum_{\mu \vdash m}\ \sum_{w\in \calF^{\,\mu}_{\!\calA_k}(\kappa)} \chi^{\lambda^\ast}_{\Sb_m}(\gamma_\mu)   = \sum_{\mu \vdash m} \F_{\!\A_k}^{\mu,\kappa}\,\chi^{\lambda^\ast}_{\Sb_m}(\gamma_\mu), \label{eqn:char2}
\end{align*}
where in the third equality we use the fact that characters are a class function.
\end{proof}

\subsection{Fixed points $\calF^{\,\mu}_{\!\calA_k}(\kappa)$}

We now characterize the fixed diagrams $\calF^{\,\mu}_{\!\calA_k}(\kappa)$ defined in~\eqref{eqn:fixedpoints}. Many of the statements in this section are straightforward generalizations of the $m=0$ case to $m \geq 0$, and the proofs of Lemma~\ref{lemma:2a}, Proposition~\ref{prop:4a}, and Lemma~\ref{lemma:iff} are nearly identical to the proofs of Lemma 2, Proposition 4, and Lemma 6 in \cite{FaHa}. 

A symmetric $m$-diagram $w$ in $\Wm$ is determined uniquely by the set partition $\tp(w)$ making up its top row, and the $m$ blocks of $\tp(w)$ distinguished as propagating. The bottom row $\bt(w)$ is the mirror image of $\tp(w)$, so we use $\tp(w)$ to denote the set partition of both the top and bottom rows of $w$.  

\begin{lemma} \label{lemma:2a} The $k$-cycle $\gamma_k$ fixes $w \in \Wm$ if and only if the following conditions hold:
\begin{enumerate}[label={\rm(\alph*)}]
\item all of the blocks of $w$ propagate if $m>0$,
\item none of the blocks of $w$ propagate if $m=0$, and
\item $i \stackrel{w}{\sim} j \ \textrm{ if and only if } \ (i+r) \stackrel{w}{\sim} (j+r), \ \textrm{ for all } \ r \in \ZZ$, where $i+r$ and $j+r$ are computed $\mod k$
\end{enumerate}
\end{lemma}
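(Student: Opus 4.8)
The plan is to translate the fixed-point condition $\gamma_k \circ w \circ \gamma_k^T = w$ into a purely combinatorial statement about the top-row set partition $\tp(w)$ together with its distinguished propagating blocks, and then to exploit the transitivity of the $k$-cycle. First I would record the effect of conjugation by a permutation. Since $\gamma_k \in \Sb_k$ is invertible with $\gamma_k^T = \gamma_k^{-1}$, no middle blocks are removed and $\gamma_k \circ w \circ \gamma_k^T$ is obtained from $w$ simply by relabelling every vertex according to the cyclic shift $\tau \colon i \mapsto i+1 \pmod{k}$ induced by $\gamma_k$ (the sign of the shift is immaterial below). By the diagram calculus of Remark~\ref{rem:dddT}, this relabelling preserves symmetry and sends each propagating block $B$ of $w$ to the block $\tau(B)$, which is again propagating. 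Consequently $\gamma_k$ fixes $w$ if and only if (i) $\tau(\tp(w)) = \tp(w)$ as a set partition and (ii) the collection of propagating blocks is preserved by $\tau$, i.e.\ $B$ propagates if and only if $\tau(B)$ propagates.

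Next I would identify these two conditions with (a)--(c). Invariance under the single shift $\tau$ is equivalent to invariance under every power $\tau^r$, which is precisely condition (c). For the propagating structure, the key observation is that $\tau$ acts \emph{transitively} on the blocks of any $\tau$-invariant partition: if $B$ is a block, then $\bigcup_{r} \tau^r(B) = \{1,\dots,k\}$ because $\tau$ already acts transitively on the $k$ vertices, and since each $\tau^r(B)$ is itself a block, every block must lie in the single $\tau$-orbit of $B$. (Equivalently, the $\tau$-invariant partitions are exactly the partitions of $\ZZ/k\ZZ$ into cosets of a subgroup, on which $\tau$ permutes the cosets in one cycle.) Combining this transitivity with condition (ii), all blocks of a fixed $w$ must carry the same propagating label: either every block propagates or none does. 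Since $w \in \Wm$ has exactly $m$ propagating blocks, the first case forces $m>0$ with no non-propagating blocks, giving (a), and the second case is $m=0$, giving (b).

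Assembling these, $\gamma_k$ fixes $w$ if and only if (c) holds and the propagating label is uniform, which for $w \in \Wm$ is exactly (a) when $m>0$ and (b) when $m=0$; conversely, any $w$ satisfying (a) or (b) together with (c) is built from a $\tau$-invariant partition carrying a uniform propagating label, so both (i) and (ii) hold and $w$ is fixed. I expect the only genuinely substantive step to be the transitivity of $\tau$ on the blocks, which is what collapses the propagating data to an all-or-nothing choice; the remaining verifications are routine diagram calculus and the elementary fact that invariance under the single shift $\tau$ upgrades to invariance under all $\tau^r$.
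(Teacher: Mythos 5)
Your proof is correct and follows essentially the same route as the paper's: conjugation by $\gamma_k$ acts as a cyclic relabelling, so fixedness is equivalent to shift-invariance of the top-row partition (condition (c)) together with preservation of the propagating labels, which transitivity of the shift on blocks collapses to an all-or-nothing choice, giving (a) and (b). The only difference is one of exposition — you spell out the orbit argument ($\bigcup_r \tau^r(B) = \{1,\dots,k\}$ forces a single block-orbit) that the paper merely asserts when it claims the blocks must all propagate or all fail to propagate.
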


\begin{proof}
The action of $\gamma_k$ on $w$ is to shift each vertex one step to the left, mod $k$.  Thus, if $i \stackrel{w}{\sim} j$ then $(i-1) \stackrel{\gamma_k\cdot w}{\sim} (j-1)$, viewing the subtraction mod $k$. Now, if $w \in \calF^{\calA_k}_m(\gamma_k)$, then $w = \gamma_k^r \cdot w$ for any $r\in\ZZ$. Thus $i \stackrel{w}{\sim} j$ implies $(i-r) \stackrel{w}{\sim} (j-r)$. If $w \in \calF^{\calA_k}_m(\gamma_k)$, then the blocks of $w$  either all propagate or all do not propagate, for if this were not the case, $\gamma_k$ would send a propagating block to a non-propagating block and visa versa. 
\end{proof}

\begin{definition}{\rm \cite[Def.~3]{FaHa}}
For each divisor $d$ of $k$, define the set partition $y_{d,k}$ of $\{1,\dots,k\}$ by the rule
\begin{equation*}
a \stackrel{y_{d,k}}{\sim} b \ \textrm{ if and only if } \ a\equiv b \mod d.
\end{equation*}
The set partition $y_{d,k}$ has $d$ connected components each of size $k/d$. We refer to the connected components of $y_{d,k}$ as \emph{$d$-components}. 
\end{definition}

\begin{examp}
When $k=6$ there are four set partitions $y_{d,6}$, one for each divisor of $6$. 
\begin{align*}
\begin{array}{c} 
\scalebox{.95}{
\begin{tikzpicture}[scale=.55,line width=1pt] 
\path (-0.4,1.85) node {$y_{1,6} =$};
\foreach \i in {1,...,6} 
{ \path (0.85*\i,2) coordinate (T\i); } 
\draw (T1) .. controls +(.1,-.5) and +(-.1,-.5) .. (T2) ;
\draw (T2) .. controls +(.1,-.5) and +(-.1,-.5) .. (T3) ;
\draw (T3) .. controls +(.1,-.5) and +(-.1,-.5) .. (T4) ;
\draw (T4) .. controls +(.1,-.5) and +(-.1,-.5) .. (T5) ;
\draw (T5) .. controls +(.1,-.5) and +(-.1,-.5) .. (T6) ;
\phantom{\draw (T3) .. controls +(.1,-.8) and +(-.1,-.8) .. (T6) ;}
\foreach \i in {1,...,6} 
{ \fill (T\i) circle (4.5pt); } 
\end{tikzpicture} }
\end{array} 
\begin{array}{c}
\scalebox{.95}{\begin{tikzpicture}[scale=.55,line width=1pt] 
\path (-0.4,1.85) node {$y_{2,6} =$};
\foreach \i in {1,...,6} 
{ \path (0.85*\i,2) coordinate (T\i); } 
\draw (T1) .. controls +(.1,-.6) and +(-.1,-.6) .. (T3) ;
\draw (T3) .. controls +(.1,-.6) and +(-.1,-.6) .. (T5) ;
\draw (T2) .. controls +(.1,-.6) and +(-.1,-.6) .. (T4) ;
\draw (T4) .. controls +(.1,-.6) and +(-.1,-.6) .. (T6) ;
\phantom{\draw (T3) .. controls +(.1,-.8) and +(-.1,-.8) .. (T6) ;}
\foreach \i in {1,...,6} 
{ \fill (T\i) circle (4.5pt); } 
\end{tikzpicture} }
\end{array}
\begin{array}{c}
\scalebox{.95}{\begin{tikzpicture}[scale=.55,line width=1pt] 
\path (-0.4,1.85) node {$y_{3,6} =$};
\foreach \i in {1,...,6} 
{ \path (0.85*\i,2) coordinate (T\i); } 
\draw (T1) .. controls +(.1,-.8) and +(-.1,-.8) .. (T4) ;
\draw (T2) .. controls +(.1,-.8) and +(-.1,-.8) .. (T5) ;
\draw (T3) .. controls +(.1,-.8) and +(-.1,-.8) .. (T6) ;
\phantom{\draw (T3) .. controls +(.1,-.8) and +(-.1,-.8) .. (T6) ;}
\foreach \i in {1,...,6} 
{ \fill (T\i) circle (4.5pt); } 
\end{tikzpicture} }
\end{array} 
\begin{array}{c}
\scalebox{.95}{\begin{tikzpicture}[scale=.55,line width=1pt] 
\path (-0.4,1.85) node {$y_{6,6} =$};
\phantom{\draw (T3) .. controls +(.1,-.8) and +(-.1,-.8) .. (T6) ;}
\foreach \i in {1,...,6} 
{ \path (0.85*\i,2) coordinate (T\i); } 
\foreach \i in {1,...,6} 
{ \fill (T\i) circle (4.5pt); } 
\end{tikzpicture} }
\end{array} 
\end{align*}
\end{examp}

\begin{prop} \label{prop:4a}
A diagram $w \in \Wm$ is fixed by $\gamma_k$ if and only if $\tp(w)=y_{d,k}$ for $d | k$, so that 
\begin{equation*}
\calF^{\,m}_{\!\calA_k}(\gamma_\kappa) = 
\begin{cases}
\big\{ w \in \Wm \ \big| \ \tp(w)= y_{d,k}, \text{ where $d|k$ } \big\} & \text{ if $m=0$, } \\
\big\{ w \in \Wm \ \big| \ \tp(w)= y_{m,k} \big\} & \text{ if $m>0$ and $m \mid k$, }   \\
\ \ \emptyset & \text{ if $m>0$ and $m\nmid k$. }
\end{cases}
\end{equation*}

\end{prop}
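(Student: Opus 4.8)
The plan is to combine Lemma~\ref{lemma:2a} with a short group-theoretic classification of the cyclically invariant set partitions. By Lemma~\ref{lemma:2a}, a diagram $w \in \Wm$ is fixed by $\gamma_k$ precisely when its blocks are uniformly propagating or uniformly non-propagating (conditions (a),(b)) and its top row $\tp(w)$ is invariant under the cyclic shift $i \mapsto i+1 \pmod k$ (condition (c)). So the first task is to identify all set partitions of $\{1,\dots,k\}$ satisfying (c), and the second is to impose (a)/(b) to pin down which $d \mid k$ can occur.

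First I would show that the shift-invariant set partitions are exactly the $y_{d,k}$ with $d \mid k$. The key observation is that condition (c) forces the relation $\stackrel{w}{\sim}$ to depend only on the difference of its arguments mod $k$: if $i \stackrel{w}{\sim} j$, then translating by any $r$ gives $(i+r) \stackrel{w}{\sim} (j+r)$, so membership in a common block is governed entirely by the difference set $D = \{ r \in \ZZ/k\ZZ \mid 0 \stackrel{w}{\sim} r \}$. I would then check that the equivalence-relation axioms for $\stackrel{w}{\sim}$ translate into $D$ being a subgroup of $\ZZ/k\ZZ$: reflexivity gives $0 \in D$, symmetry gives $-D = D$, and transitivity gives $D + D \subseteq D$. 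Since the subgroups of $\ZZ/k\ZZ$ are exactly $\langle d \rangle$ for $d \mid k$, and the cosets of $\langle d \rangle$ are precisely the residue classes mod $d$, the associated partition is exactly $y_{d,k}$, which has $d$ blocks each of size $k/d$. The converse, that each $y_{d,k}$ satisfies (c), is immediate from the fact that $a \equiv b \pmod d$ if and only if $a+r \equiv b+r \pmod d$.

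Finally I would fold in conditions (a)/(b) to read off the three cases. When $m = 0$ all blocks are non-propagating automatically, so every $y_{d,k}$ with $d \mid k$ occurs, giving the first case. When $m > 0$, condition (a) requires all blocks of $w$ to propagate; since $w$ has exactly $m$ propagating blocks while $\tp(w) = y_{d,k}$ has exactly $d$ blocks, this forces $d = m$. Hence a fixed diagram exists only if $m \mid k$, in which case $\tp(w) = y_{m,k}$ (the second case), and $\calF^{\,m}_{\!\calA_k}(\gamma_k) = \emptyset$ otherwise (the third case).

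I expect the main obstacle to be the careful verification of the subgroup argument, specifically confirming that condition (c) genuinely collapses the block structure onto the single difference set $D$ and that the transitivity of $\stackrel{w}{\sim}$ yields closure $D + D \subseteq D$; once $D$ is known to be a subgroup, the classification of subgroups of $\ZZ/k\ZZ$ does the rest. The remaining bookkeeping, namely counting the $d$ blocks of $y_{d,k}$ and matching this count to $m$, is routine.
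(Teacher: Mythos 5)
Your proof is correct, and the heart of it differs from the paper's. Both arguments reduce to Lemma~\ref{lemma:2a} and then classify the shift-invariant set partitions of $\{1,\dots,k\}$, but the classification step is done differently. The paper takes $d$ to be the \emph{minimum} horizontal distance between connected vertices, then argues that $d\mid k$ and that any connection outside $y_{d,k}$ would produce a shorter connection, contradicting minimality. You instead observe that condition (c) makes the relation translation-invariant, so it is determined by the difference set $D=\{r \mid 0\stackrel{w}{\sim} r\}$; the equivalence-relation axioms make $D$ a subgroup of $\ZZ/k\ZZ$, the subgroups are $\langle d\rangle$ for $d\mid k$, and their cosets are exactly the blocks of $y_{d,k}$. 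Your route is arguably tidier at the one delicate point: the paper's justification that $d\mid k$ (``otherwise all of the vertices of $w$ are connected implying $d=1$'') is terse and really amounts to noting that $\gcd(d,k)$ is also a realized distance, whereas in your framing divisibility is automatic from the subgroup classification. The paper's argument is more elementary and purely diagrammatic; yours is more structural and would transfer verbatim to any cyclic-symmetry situation. The final bookkeeping --- $m=0$ allows every divisor, $m>0$ forces the number of blocks $d$ to equal the number of propagating blocks $m$, hence $m\mid k$ or emptiness --- matches the paper's.
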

\begin{proof}
If $d|k$ and $\tp(w)=y_{d,k}$ then $w$ satisfies the conditions of Lemma~\ref{lemma:2a} and $w$ is fixed by $\gamma_k$. If $m=0$, none of the blocks propagate and we can construct $w$ from $y_{d,k}$ for any $d|k$. If $m>0$, then by Lemma~\ref{lemma:2a} $w$ must have $m$ blocks, all of which propagate, and so $\tp(w)=y_{m,k}$. Conversely, let $w \in \calF^{\,m}_{\!\calA_k}(\gamma_\kappa)$, and let $d$ be the \emph{minimum} horizontal distance between two vertices that are connected by an edge in $w$. That is,
\begin{equation*}
d = \begin{cases}
k, & \textrm{if $w$ has no horizontal connections,} \\
\min\left\{(i-j)\mod k \mid i \stackrel{w}{\sim} j, i \not=j\right\}, & \textrm{otherwise.}
\end{cases}
\end{equation*}
Choose $i$ and $j$ so that $i \stackrel{w}{\sim} j$ with $(i-j)\mod k = d$. Then by Lemma~\ref{lemma:2a}, we have $(i+r) \stackrel{w}{\sim} (j+r)$ for $0 \leq r \leq k$. Now, $d$ must divide $k$, otherwise all of the vertices of $w$ are connected implying $d=1$, which divides $k$. 
If there were a connection in $\tp(w)$ not in $y_{d,k}$, then $\tp(w)$ would connect two vertices which are closer together than $d$, contradicting the minimality of $d$. Thus $\tp(w) = y_{d,k}$.\end{proof}

\begin{lemma} \label{lemma:perm}
If $m>0$ divides $k$ and $w \in \Wm$ such that $\tp(w) =y_{m,k}$, then the permutation induced when $\gamma_k$ conjugates $w$ is $\sigma_{\gamma_k,w} = \gamma_m$, where $\gamma_m$ is the $m$-cycle $(m,m-1,\ldots,1) \in \Sb_m$. 
\end{lemma}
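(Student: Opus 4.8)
The plan is to make everything explicit in terms of residue classes and then read off the permutation directly. First I would record the blocks of $w$. Since $m \mid k$ and $\tp(w) = y_{m,k}$, the block structure of $w$ consists of the $m$ residue classes modulo $m$; writing $B_i = \{\,i,\, i+m,\, i+2m,\, \dots,\, i+(k/m-1)m\,\}$ for the class of $i$ (for $1 \le i \le m$), these are exactly the $m$ propagating blocks of $w$. The maximum entry of $B_i$ is $i + (k/m-1)m = i + k - m$, so that $\max B_1 < \max B_2 < \cdots < \max B_m$, and hence in max-entry order $B_i$ is precisely the $i$th block of $w$.

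Next I would describe the conjugation action concretely. As noted in the proof of Lemma~\ref{lemma:2a}, conjugating by $\gamma_k$ shifts every vertex one step to the left modulo $k$; in block terms this sends a block $B$ to $B - 1 = \{\,b-1 \bmod k \mid b \in B\,\}$. Applying this to each $B_i$ gives $B_i - 1 = B_{i-1}$ for $2 \le i \le m$ (shifting the class of $i$ down by one yields the class of $i-1$), while $B_1 - 1 = B_m$, since the class of $1$ shifts to the class of $0 \equiv m \pmod m$, which is the block containing $m$.

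Finally I would translate these block images into the twist permutation. By the definition of $\sigma_{\gamma_k, w}$ in~\eqref{def:twist}, the $i$th block $B_i$ of $w$ is sent to the $\sigma_{\gamma_k,w}(i)$th block of $\gamma_k \circ w \circ \gamma_k^T = w$, read in max-entry order. Since $B_i$ is carried to $B_{i-1}$ for $i \ge 2$ and $B_1$ to $B_m$, and since $B_j$ is the $j$th block of $w$ in max-entry order, I obtain $\sigma_{\gamma_k,w}(i) = i - 1$ for $2 \le i \le m$ and $\sigma_{\gamma_k,w}(1) = m$. This is exactly the $m$-cycle $(m, m-1, \dots, 1) = \gamma_m$, as claimed.

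The one genuinely delicate point -- and the step I expect to require the most care -- is fixing the direction of the shift and of the induced permutation so that we land on $\gamma_m$ rather than its inverse. I would pin this down by appealing to the shift-left convention already established in the proof of Lemma~\ref{lemma:2a} and by carefully applying the ``gets sent to'' convention in the definition~\eqref{def:twist} of the twist, verifying the wrap-around case $B_1 \mapsto B_m$ explicitly; everything else is a routine bookkeeping check.
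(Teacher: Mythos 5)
Your proposal is correct and follows essentially the same route as the paper's proof: both use the fact that conjugation by $\gamma_k$ shifts every vertex one step to the left mod $k$, observe that the blocks of $y_{m,k}$ in max-entry order are cyclically permuted by this shift, and read off the resulting permutation as $(m,m-1,\dots,1)$. Your version is simply more explicit — writing the blocks as residue classes $B_i$, verifying that max-entry order coincides with the order of the residues, and checking the wrap-around $B_1\mapsto B_m$ by hand — where the paper compresses these steps into a one-line description of the shift.
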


\begin{proof}
If $m>0$ and $w \in \Wm$, then all $m$ connected components in $w$ are fixed blocks. Using max-entry order, label these fixed blocks in increasing order mod $m$, so that $w_i < w_j$ if $i<j$. The action of $\gamma_k$ on $w$ is to shift the fixed blocks one step to the left, which shifts $w_1$ to $w_m$, $w_m$ to $w_{m-1}$, and so on, down to $w_2$ being sent to $w_1$. 
\end{proof}

\begin{examp}
Let $k=10$ and $m=5$. In the example below,  we conjugate a  $5$-diagram $w$, whose propagating blocks are $y_{5,10}$, by the cycle $\gamma_{10}$.  The induced permutation on the fixed blocks of $w$ is $\sigma_{\gamma_{10},w} = (5,4,3,2,1) = \gamma_5$. 
\begin{equation*}
\begin{array}{cc}
\gamma_{10} & = 
\begin{array}{c}
\scalebox{0.75}{
\begin{tikzpicture}[scale=.55,line width=1.5pt] 
\foreach \i in {1,...,10} 
{ \path (0.85*\i,2) coordinate (T\i); \path (0.85*\i,0) coordinate (B\i); } 
\filldraw[fill=gray!25,draw=gray!25,line width=3.2pt]  (T1) -- (T10) -- (B10) -- (B1) -- (T1);
\draw (B1) .. controls +(.1,1) and +(-.1,-1) .. (T10) ;
\draw (B10) -- (T9) ;
\draw (B9) -- (T8) ;
\draw (B8) -- (T7) ;
\draw (B7) -- (T6) ;
\draw (B6) -- (T5) ;
\draw (B5) -- (T4) ;
\draw (B4) -- (T3) ;
\draw (B3) -- (T2) ;
\draw (B2) -- (T1) ;
\foreach \i in {1,...,10} 
{ \fill (T\i) circle (4.5pt);\fill (B\i) circle (4.5pt); } 
\end{tikzpicture} } 
\end{array} \\
w & = 
\begin{array}{c}
\scalebox{0.75}{
\begin{tikzpicture}[scale=.55,line width=1.5pt]
\foreach \i in {1,...,10} 
{ \path (.85*\i,2.5) coordinate (T\i);
 \path (.85*\i,0) coordinate (B\i);}
\draw[rounded corners=.15mm, fill= gray!25!blue!10,draw=gray!25!blue!10,line width=4pt]  (T1) -- (T10) -- (B10) -- (B1) -- cycle;
\draw[blue!30!green] (T1) .. controls +(.1,-1) and +(-.1,-1) .. (T6) ;
\draw[blue!30!green] (B1) .. controls +(.1,1) and +(-.1,1) .. (B6) ;
\draw[blue!30!yellow] (T2) .. controls +(.1,-1) and +(-.1,-1) .. (T7) ;
\draw[blue!30!yellow] (B2) .. controls +(.1,1) and +(-.1,1) .. (B7) ;
\draw[blue!30!magenta] (T3) .. controls +(.1,-1) and +(-.1,-1) .. (T8) ;
\draw[blue!30!magenta] (B3) .. controls +(.1,1) and +(-.1,1) .. (B8) ;
\draw[cyan] (T4) .. controls +(.1,-1) and +(-.1,-1) .. (T9) ;
\draw[cyan] (B4) .. controls +(.1,1) and +(-.1,1) .. (B9) ;
\draw[purple] (T5) .. controls +(.1,-1) and +(-.1,-1) .. (T10) ;
\draw[purple] (B5) .. controls +(.1,1) and +(-.1,1) .. (B10) ;
\draw[blue!30!green] (T6) -- (B6);
\draw[blue!30!yellow] (T7) -- (B7);
\draw[blue!30!magenta] (T8) -- (B8);
\draw[cyan] (T9) -- (B9);
\draw[purple] (T10) -- (B10);
\foreach \i in {1,...,10} { \fill (T\i) circle (4.5pt);   \fill (B\i) circle (4.5pt);} 
\end{tikzpicture} }
\end{array} \\
\gamma_{10}^T & = 
\begin{array}{c}
\scalebox{0.75}{
\begin{tikzpicture}[scale=.55,line width=1.5pt] 
\foreach \i in {1,...,10} 
{ \path (0.85*\i,2) coordinate (B\i); \path (0.85*\i,0) coordinate (T\i); } 
\filldraw[fill=gray!25,draw=gray!25,line width=3.2pt]  (T1) -- (T10) -- (B10) -- (B1) -- (T1);
\draw (B1) .. controls +(.1,-1) and +(-.1,1) .. (T10) ;
\draw (B10) -- (T9) ;
\draw (B9) -- (T8) ;
\draw (B8) -- (T7) ;
\draw (B7) -- (T6) ;
\draw (B6) -- (T5) ;
\draw (B5) -- (T4) ;
\draw (B4) -- (T3) ;
\draw (B3) -- (T2) ;
\draw (B2) -- (T1) ;
\foreach \i in {1,...,10} 
{ \fill (T\i) circle (4.5pt);\fill (B\i) circle (4.5pt); } 
\end{tikzpicture} }
\end{array}
\end{array} = 
\begin{array}{c}
\scalebox{0.75}{
\begin{tikzpicture}[scale=.55,line width=1.5pt]
\foreach \i in {1,...,10} 
{ \path (.85*\i,2.5) coordinate (T\i);
 \path (.85*\i,0) coordinate (B\i);}
\draw[rounded corners=.15mm, fill= gray!25!blue!10,draw=gray!25!blue!10,line width=4pt]  (T1) -- (T10) -- (B10) -- (B1) -- cycle;
\draw[blue!30!yellow] (T1) .. controls +(.1,-1) and +(-.1,-1) .. (T6) ;
\draw[blue!30!yellow] (B1) .. controls +(.1,1) and +(-.1,1) .. (B6) ;
\draw[blue!30!magenta] (T2) .. controls +(.1,-1) and +(-.1,-1) .. (T7) ;
\draw[blue!30!magenta] (B2) .. controls +(.1,1) and +(-.1,1) .. (B7) ;
\draw[cyan] (T3) .. controls +(.1,-1) and +(-.1,-1) .. (T8) ;
\draw[cyan] (B3) .. controls +(.1,1) and +(-.1,1) .. (B8) ;
\draw[purple] (T4) .. controls +(.1,-1) and +(-.1,-1) .. (T9) ;
\draw[purple] (B4) .. controls +(.1,1) and +(-.1,1) .. (B9) ;
\draw[blue!30!green] (T5) .. controls +(.1,-1) and +(-.1,-1) .. (T10) ;
\draw[blue!30!green] (B5) .. controls +(.1,1) and +(-.1,1) .. (B10) ;
\draw[blue!30!green] (T10) -- (B10);
\draw[blue!30!yellow] (T6) -- (B6);
\draw[blue!30!magenta] (T7) -- (B7);
\draw[cyan] (T8) -- (B8);
\draw[purple] (T9) -- (B9);
\foreach \i in {1,...,10} { \fill (T\i) circle (4.5pt);   \fill (B\i) circle (4.5pt);} 
\end{tikzpicture} }
\end{array}.
\end{equation*}
\end{examp}

\begin{definition}{\rm \cite[Def.~5]{FaHa}}
For a partition $\kappa = [\kappa_1,\dots,\kappa_\ell]$ of $k$ and a set partition $\pi$ of $\{1,\dots,k\}$, we say that the $\kappa$-blocks of $\pi$ are the $\ell$ sub-set partitions given by grouping the elements of $\{1,\dots,k\}$ into the subsets
\begin{equation*}
\{1,\dots,\kappa_1\},\{\kappa_1+1,\dots,\kappa_1+\kappa_2\},\dots,\{\kappa_1+ \cdots + \kappa_{\ell-1}+1, \dots,k\}
\end{equation*}
where within a $\kappa$-block we inherit any connection from $\pi$, but ignore any connections between different $\kappa$-blocks. A $\kappa$-block is said to be \emph{of type $d$} if it has $d$ connected components.
\end{definition}

\begin{examp}\label{examp:Pkappa-blocks} 
Below is a set partition of $\{1,\ldots,13\}$. If $\kappa = [5,5,3]\vdash 13$, the $\kappa$-blocks of the set partition are of type $3$, $2$, and $2$, respectively. 
\begin{equation*}
\begin{array}{c}
\scalebox{1}{
\begin{tikzpicture}[scale=.55,line width=1pt] 
\foreach \i in {1,...,13} 
{ \path (\i,2) coordinate (T\i); } 
\draw (T1) .. controls +(.1,-.4) and +(-.1,-.4) .. (T2) ;
\draw (T2) .. controls +(.1,-.6) and +(-.1,-.6) .. (T5) ;
\draw (T6) .. controls +(.1,-.6) and +(-.1,-.6) .. (T8) ;
\draw (T7) .. controls +(.1,-.8) and +(-.1,-.8) .. (T9) ;
\draw (T8) .. controls +(.1,-.6) and +(-.1,-.6) .. (T10) ;
\draw (T11) .. controls +(.1,-.6) and +(-.1,-.6) .. (T13) ;
\draw (T3) .. controls +(.1,-1.8) and +(-.1,-1.8) .. (T11) ;
\draw (T10) .. controls +(.1,-1.8) and +(-.1,-1.8) .. (T12) ;
\foreach \i in {1,...,13} 
{ \fill (T\i) circle (4.5pt); } 
\draw  (T1)  node[above=0.05cm]{${\scriptstyle 1}$};
\draw  (T2)  node[above=0.05cm]{${\scriptstyle 2}$};
\draw  (T3)  node[above=0.05cm]{${\scriptstyle 3}$};
\draw  (T4)  node[above=0.05cm]{${\scriptstyle 4}$};
\draw  (T5)  node[above=0.05cm]{${\scriptstyle 5}$};
\draw  (T6)  node[above=0.05cm]{${\scriptstyle 6}$};
\draw  (T7)  node[above=0.05cm]{${\scriptstyle 7}$};
\draw  (T8)  node[above=0.05cm]{${\scriptstyle 8}$};
\draw  (T9)  node[above=0.05cm]{${\scriptstyle 9}$};
\draw  (T10)  node[above=0.05cm]{${\scriptstyle 10}$};
\draw  (T11)  node[above=0.05cm]{${\scriptstyle 11}$};
\draw  (T12)  node[above=0.05cm]{${\scriptstyle 12}$};
\draw  (T13)  node[above=0.05cm]{${\scriptstyle 13}$};
\path (8,0.75) node {$\phantom{\underbrace{\quad\quad\quad\quad\quad\quad}_{\kappa_2}}$};
\end{tikzpicture} }
\end{array},
\begin{array}{c}
\scalebox{1}{
\begin{tikzpicture}[scale=.55,line width=1pt] 
\foreach \i in {1,...,13} 
{ \path (\i,2) coordinate (T\i); } 
\draw (T1) .. controls +(.1,-.4) and +(-.1,-.4) .. (T2) ;
\draw (T2) .. controls +(.1,-.6) and +(-.1,-.6) .. (T5) ;
\path (3,0.75) node {$\underbrace{\quad\quad\quad\quad\quad\quad}_{\kappa_1}$};
\draw (T6) .. controls +(.1,-.6) and +(-.1,-.6) .. (T8) ;
\draw (T7) .. controls +(.1,-.8) and +(-.1,-.8) .. (T9) ;
\draw (T8) .. controls +(.1,-.6) and +(-.1,-.6) .. (T10) ;
\path (8,0.75) node {$\underbrace{\quad\quad\quad\quad\quad\quad}_{\kappa_2}$};
\draw (T11) .. controls +(.1,-.6) and +(-.1,-.6) .. (T13) ;
\path (12,0.75) node {$\underbrace{\quad\quad\quad}_{\kappa_3}$};
\foreach \i in {1,...,13} 
{ \fill (T\i) circle (4.5pt); } 
\draw  (T1)  node[above=0.05cm]{${\scriptstyle 1}$};
\draw  (T2)  node[above=0.05cm]{${\scriptstyle 2}$};
\draw  (T3)  node[above=0.05cm]{${\scriptstyle 3}$};
\draw  (T4)  node[above=0.05cm]{${\scriptstyle 4}$};
\draw  (T5)  node[above=0.05cm]{${\scriptstyle 5}$};
\draw  (T6)  node[above=0.05cm]{${\scriptstyle 6}$};
\draw  (T7)  node[above=0.05cm]{${\scriptstyle 7}$};
\draw  (T8)  node[above=0.05cm]{${\scriptstyle 8}$};
\draw  (T9)  node[above=0.05cm]{${\scriptstyle 9}$};
\draw  (T10)  node[above=0.05cm]{${\scriptstyle 10}$};
\draw  (T11)  node[above=0.05cm]{${\scriptstyle 11}$};
\draw  (T12)  node[above=0.05cm]{${\scriptstyle 12}$};
\draw  (T13)  node[above=0.05cm]{${\scriptstyle 13}$};
\end{tikzpicture} }
\end{array}.
\end{equation*}
\end{examp}

\begin{lemma} \label{lemma:iff} 
Let $\kappa = [\kappa_1, \dots, \kappa_\ell] \vdash k$ and $\mu = [\mu_1,\dots,\mu_s]\vdash m \leq k$. Then $w \in \Wm$ is in $\calF^{\,\mu}_{\!\calA_k}(\kappa)$ if and only if the following conditions hold:
\begin{enumerate}[label=\rm{(\alph*)}]
\item for each $i$, the $\kappa_i$-block of $\tp(w)$ is of the form $y_{d_i,\kappa_i}$ for some divisor $d_i$ of $\kappa_i$,
\item if a $\kappa_i$-block of type $d_i$ and a $\kappa_j$-block of type $d_j$ have connections between them in $w$, then
\begin{enumerate}[label=\rm{(\roman*)}]
\item $d_i = d_j$, 
\item each $d_i$-component of the $\kappa_i$-block is connected to a unique $d_i$-component of the $\kappa_j$-block,
\item there are no further connections between these two blocks,
\end{enumerate}
\item for each $i$, there are $\m_i(\mu)$ sets of connected $\kappa$-blocks of type $i$ that propagate, where $\m_i(\mu)$ is the multiplicity of $i$ in $\mu$.
\end{enumerate}
\end{lemma}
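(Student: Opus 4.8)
The plan is to reduce the fixed-point condition to the statement that the set partition $\tp(w)$ is invariant under $\gamma_\kappa$ viewed as a permutation of $\{1,\dots,k\}$, and then to analyze such invariant partitions orbit by orbit. Since $\gamma_\kappa = \gamma_{\kappa_1} \otimes \cdots \otimes \gamma_{\kappa_\ell}$ acts on $\{1,\dots,k\}$ with the $\kappa$-blocks $B_1,\dots,B_\ell$ as its orbits, conjugation by $\gamma_\kappa$ shifts vertices as in the proof of Lemma~\ref{lemma:2a} and permutes the blocks of $\tp(w)$; thus $w$ is fixed if and only if $\tp(w)$ is stabilized by $\gamma_\kappa$. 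The propagating status is automatically preserved, since $\gamma_\kappa$ is a permutation diagram and so $\pn(\gamma_\kappa \circ w \circ \gamma_\kappa^T) = \pn(w) = m$.

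First I would prove (a). The $\kappa_i$-block of $\tp(w)$, namely the restriction of $\tp(w)$ to $B_i$, is invariant under the single cycle $\gamma_{\kappa_i}$ because restriction to the orbit $B_i$ commutes with the shift. Proposition~\ref{prop:4a} in its single-cycle form (via Lemma~\ref{lemma:2a}), applied with $k$ replaced by $\kappa_i$, then forces this restriction to equal $y_{d_i,\kappa_i}$ for a divisor $d_i \mid \kappa_i$, which is exactly (a). A useful consequence to record immediately is that for any block $P$ of $\tp(w)$, the intersection $P \cap B_i$ is either empty or a single $d_i$-component, since the blocks of $y_{d_i,\kappa_i}$ are precisely the $d_i$-components.

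The heart of the argument is (b), and this is where the main obstacle lies. Suppose a block $P$ of $\tp(w)$ meets both $B_i$ and $B_j$. Writing $P_r = \gamma_\kappa^r(P)$, I would study the $\langle \gamma_\kappa \rangle$-orbit of $P$ and its period $p$ (the least $p>0$ with $P_p = P$). Using the consequence above, $P_r \cap B_i$ is the $d_i$-component of the shifted vertex, so $P_r \cap B_i = P \cap B_i$ exactly when $d_i \mid r$; hence $p = \operatorname{lcm}(d_{i_1},\dots)$ over all $\kappa$-blocks $B_{i_t}$ met by $P$. The key counting step is that the map sending each orbit block $P_r$ to the single $d_i$-component $P_r \cap B_i$ is a bijection onto the $d_i$ components of $B_i$: it is surjective because shifting sweeps through all residues mod $d_i$, and injective because each $d_i$-component lies in a unique block of $\tp(w)$. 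This forces $p = d_i$, and symmetrically $p = d_j$, giving $d_i = d_j$, which is (b)(i). The same bijection then exhibits the connection between $B_i$ and $B_j$ as a perfect matching of $d_i$-components with no further connections, which is (b)(ii)--(iii); transitivity of ``being linked'' shows every $\kappa$-block in one connected set shares a common type. I expect the bookkeeping needed to handle a block $P$ that simultaneously meets three or more $\kappa$-blocks --- ensuring the period and bijection arguments close up consistently across all of them --- to be the most delicate point.

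Finally, for (c) I would track the twist $\sigma_{\gamma_\kappa,w}$. Generalizing Lemma~\ref{lemma:perm} from a single cycle to a connected set of $\kappa$-blocks, I would show that a connected set of type $d$ contributes exactly $d$ blocks to $\tp(w)$ and that $\gamma_\kappa$ permutes these $d$ blocks in a single $d$-cycle; when they propagate this is a $d$-cycle of $\sigma_{\gamma_\kappa,w}$, and when they do not propagate they contribute nothing to the twist. Consequently the cycle type of $\sigma_{\gamma_\kappa,w}$ records, for each $i$, the number of propagating connected sets of type $i$, so $\sigma_{\gamma_\kappa,w}$ has cycle type $\mu$ precisely when there are $\m_i(\mu)$ propagating connected sets of type $i$ for every $i$, which is (c). The converse direction --- that any $w$ satisfying (a)--(c) is fixed by $\gamma_\kappa$ with twist of type $\mu$ --- follows by reversing these steps, since the matchings described in (b) are manifestly $\gamma_\kappa$-invariant.
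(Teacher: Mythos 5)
Your proposal is correct and follows the same overall skeleton as the paper's proof (restrict to each $\kappa_i$-orbit and invoke Proposition~\ref{prop:4a} for (a); analyze cross-block connections for (b); use Lemma~\ref{lemma:perm} on each connected set for (c)), but your argument for (b) is organized differently. The paper proves $d_i=d_j$ by a direct contradiction: if $d_j>d_i$, the shift by $\gamma_\kappa$ would send a $d_j$-component not connected to the $\kappa_i$-block onto one that is connected; uniqueness of the matching comes from transitivity. You instead compute the period $p$ of a block $P$ of $\tp(w)$ under $\langle\gamma_\kappa\rangle$ and show, via the injection $P_r\mapsto P_r\cap B_i$ into the $d_i$-components together with $d_i\mid p$, that $p=d_i$ for \emph{every} $\kappa$-block $B_i$ that $P$ meets. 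This buys you something the paper's pairwise argument leaves implicit: the case of a block meeting three or more $\kappa$-blocks is handled uniformly, since all the $d_i$ are forced to equal the single invariant $p$, and the perfect-matching statement in (b)(ii)--(iii) drops out of the same bijection. The one point to tighten is your opening reduction: being fixed is \emph{not} equivalent to $\gamma_\kappa$ stabilizing $\tp(w)$ alone --- the designated set of propagating blocks must also be carried to itself (preserving the rank $\pn$ is strictly weaker; e.g.\ $\tp(w)=y_{d,k}$ with only some $d$-components propagating is stabilized as a set partition but not fixed as a symmetric $m$-diagram). Your later analysis in (c) does implicitly use the correct notion, since the connected sets are $\gamma_\kappa$-orbits on which propagation must be constant, so nothing downstream breaks, but the reduction should be stated with both conditions.
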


\begin{proof}
(a) When $\gamma_\kappa$ acts on $w$, the cycle $\gamma_{\kappa_i}$ acts on the $\kappa_i$-block, so by Proposition~\ref{prop:4a} the $\kappa_i$-block must be of the form $y_{d_i,\kappa_i}$ for some divisor $d_i$ of $\kappa_i$. 

(b) If a $d_i$-component in the $\kappa_i$-block is connected to two $d_j$-components $d_j^1$ and $d_j^2$ in the $\kappa_j$-block, then by transitivity $d_j^1 \sim d_j^2$. Thus, each $d_i$-component is connected to a unique $d_j$-component. When $\gamma_\kappa$ acts on $w$ it permutes the $d_i$-components in the $\kappa_i$-block and the $d_j$-components in the $\kappa_j$-block. If $d_j > d_i$ then $\gamma_\kappa$ sends a $d_j$-component that is not connected to the $\kappa_i$-block to a $d_j$-component that is connected to the $\kappa_i$-block, which cannot happen. The same is true when $d_j < d_i$, and thus $d_i = d_j$. There can be no further connections between blocks because that would  force two components in one to be connected to a single component in the other. 

(c) Now suppose that a set of $\kappa$-blocks of type $i$ are connected to each other and all propagate. Then there are $i$ propagating edges from the rightmost $\kappa$-block in the set, and by Lemma~\ref{lemma:perm}, when $\gamma_\kappa$ acts on $w$ the $i$ edges are permuted according to the $i$-cycle $\gamma_i$. Hence, if for each $i$ there are $\m_i(\mu)$ sets of connected $\kappa$-blocks of type $i$ that propagate then there are $\sum_{i=1}^m \m_i(\mu) =m$ propagating blocks, which are permuted by $\sigma_{\gamma_\kappa,w} = \gamma_{i_1} \ot \gamma_{i_2} \ot \cdots \ot \gamma_{i_s}$, where $i_j \in \mu$. Clearly $\sigma_{\gamma_\kappa,w}$ has cycle type $\mu$, and any other choices for the number and type of propagating blocks gives a different cycle type, so that $w \in \calF^{\,\nu}_{\!\calA_k}(\kappa)$ for $\nu \not=\mu$. 
\end{proof}

\begin{examp} \label{ex:fixedpoints} 
Let $\kappa = [6,5,3,3,2,2]\vdash 21$ and $\mu = [3,3,2]\vdash 8$. The following diagram is fixed under conjugation by $\gamma_\kappa$, and the permutation of the fixed blocks is $\sigma_{\gamma_\kappa,w_1} = (3,2,1)(6,5,4)(7,8) \in~\Sb_8$.
\begin{equation*}
\begin{array}{cc}
\gamma_\kappa \!\!\!\! & =  \!\!\!\!
\begin{array}{c}
\scalebox{0.75}{
\begin{tikzpicture}[scale=.55,line width=1.5pt] 
\foreach \i in {1,...,21} 
{ \path (0.85*\i,2) coordinate (T\i); \path (0.85*\i,0) coordinate (B\i); } 
\filldraw[fill=gray!25,draw=gray!25,line width=4pt]  (T1) -- (T21) -- (B21) -- (B1) -- (T1);
\draw (B1) .. controls +(.1,.4) and +(-.1,-.4) .. (T6) ;
\draw (B6) -- (T5) ;
\draw (B5) -- (T4) ;
\draw (B4) -- (T3) ;
\draw (B3) -- (T2) ;
\draw (B2) -- (T1) ;
\draw (B7) .. controls +(.1,.4) and +(-.1,-.4) .. (T11) ;
\draw (B11) -- (T10) ;
\draw (B10) -- (T9) ;
\draw (B9) -- (T8) ;
\draw (B8) -- (T7) ;
\draw (B12) .. controls +(.1,.4) and +(-.1,-.4) .. (T14) ;
\draw (B14) -- (T13) ;
\draw (B13) -- (T12) ;
\draw (B15) .. controls +(.1,.4) and +(-.1,-.4) .. (T17) ;
\draw (B17) -- (T16) ;
\draw (B16) -- (T15) ;
\draw (B18) .. controls +(.1,.4) and +(-.1,-.4) .. (T19) ;
\draw (B19) -- (T18) ;
\draw (B20) .. controls +(.1,.4) and +(-.1,-.4) .. (T21) ;
\draw (B21) -- (T20) ;
\foreach \i in {1,...,21} 
{ \fill (T\i) circle (4.5pt);\fill (B\i) circle (4.5pt); } 
\end{tikzpicture} } 
\end{array} \\
w_1  \!\!\!\!& =  \!\!\!\!
\begin{array}{c}
\scalebox{0.75}{
\begin{tikzpicture}[scale=.55,line width=1.5pt]
\foreach \i in {1,...,21} 
{ \path (0.85*\i,2.5) coordinate (T\i);
 \path (0.85*\i,0) coordinate (B\i);}
\draw[rounded corners=.15mm, fill= gray!25!blue!10,draw=gray!25!blue!10,line width=4pt]  (T1) -- (T21) -- (B21) -- (B1) -- cycle;
\draw[blue!30!magenta] (T3) .. controls +(.1,-1) and +(-.1,-1) .. (T6) ;
\draw[blue!30!magenta] (B3) .. controls +(.1,1) and +(-.1,1) .. (B6) ;
\draw[blue!30!magenta] (T6) .. controls +(.1,-.65) and +(-.1,-.65) .. (T12) ;
\draw[blue!30!magenta] (B6) .. controls +(.1,.65) and +(-.1,.65) .. (B12) ;
\draw[blue!30!green] (T1) .. controls +(.1,-1) and +(-.1,-1) .. (T4) ;
\draw[blue!30!green] (B1) .. controls +(.1,1) and +(-.1,1) .. (B4) ;
\draw[blue!30!green] (T4) .. controls +(.1,-1) and +(-.1,-1) .. (T13) ;
\draw[blue!30!green] (B4) .. controls +(.1,1) and +(-.1,1) .. (B13) ;
\draw[blue!30!yellow] (T2) .. controls +(.1,-1) and +(-.1,-1) .. (T5) ;
\draw[blue!30!yellow] (B2) .. controls +(.1,1) and +(-.1,1) .. (B5) ;
\draw[blue!30!yellow] (T5) .. controls +(.1,-1.25) and +(-.1,-1.25) .. (T14) ;
\draw[blue!30!yellow] (B5) .. controls +(.1,1.25) and +(-.1,1.25) .. (B14) ;
\draw[blue!30!magenta] (T12) -- (B12);
\draw[blue!30!green] (T13) -- (B13);
\draw[blue!30!yellow] (T14) -- (B14);
\draw (T7) .. controls +(.1,-.3) and +(-.1,-.3) .. (T8);
\draw (T8) .. controls +(.1,-.3) and +(-.1,-.3) .. (T9);
\draw (T9) .. controls +(.1,-.3) and +(-.1,-.3) .. (T10);
\draw (T10) .. controls +(.1,-.3) and +(-.1,-.3) .. (T11);
\draw (B7) .. controls +(.1,.3) and +(-.1,.3) .. (B8);
\draw (B8) .. controls +(.1,.3) and +(-.1,.3) .. (B9);
\draw (B9) .. controls +(.1,.3) and +(-.1,.3) .. (B10);
\draw (B10) .. controls +(.1,.3) and +(-.1,.3) .. (B11);
\draw[cyan] (T15) -- (B15);
\draw[purple] (T16) -- (B16);
\draw[orange] (T17) -- (B17);
\draw[blue] (T19) .. controls +(.1,-.5) and +(-.1,-.5) .. (T20);
\draw[blue] (B19) .. controls +(.1,.5) and +(-.1,.5) .. (B20);
\draw[red] (T18) .. controls +(.1,-1) and +(-.1,-1) .. (T21);
\draw[red] (B18) .. controls +(.1,1) and +(-.1,1) .. (B21);
\draw[blue] (T20) -- (B20);
\draw[red] (T21) -- (B21);
\foreach \i in {1,...,21} { \fill (T\i) circle (4.5pt);   \fill (B\i) circle (4.5pt);} 
\end{tikzpicture} }
\end{array} \\
\gamma_\kappa^T \!\!\!\! & = \!\!\!\!
\begin{array}{c}
\scalebox{0.75}{
\begin{tikzpicture}[scale=.55,line width=1.5pt] 
\foreach \i in {1,...,21} 
{ \path (0.85*\i,2) coordinate (B\i); \path (0.85*\i,0) coordinate (T\i); } 
\filldraw[fill=gray!25,draw=gray!25,line width=3.2pt]  (T1) -- (T21) -- (B21) -- (B1) -- (T1);
\draw (B1) .. controls +(.1,-.4) and +(-.1,.4) .. (T6) ;
\draw (B6) -- (T5) ;
\draw (B5) -- (T4) ;
\draw (B4) -- (T3) ;
\draw (B3) -- (T2) ;
\draw (B2) -- (T1) ;
\draw (B7) .. controls +(.1,-.4) and +(-.1,.4) .. (T11) ;
\draw (B11) -- (T10) ;
\draw (B10) -- (T9) ;
\draw (B9) -- (T8) ;
\draw (B8) -- (T7) ;
\draw (B12) .. controls +(.1,-.4) and +(-.1,.4) .. (T14) ;
\draw (B14) -- (T13) ;
\draw (B13) -- (T12) ;
\draw (B15) .. controls +(.1,-.4) and +(-.1,.4) .. (T17) ;
\draw (B17) -- (T16) ;
\draw (B16) -- (T15) ;
\draw (B18) .. controls +(.1,-.4) and +(-.1,.4) .. (T19) ;
\draw (B19) -- (T18) ;
\draw (B20) .. controls +(.1,-.4) and +(-.1,.4) .. (T21) ;
\draw (B21) -- (T20) ;
\foreach \i in {1,...,21} 
{ \fill (T\i) circle (4.5pt);\fill (B\i) circle (4.5pt); } 
\end{tikzpicture} }
\end{array}
\end{array} \!\!\!\!\!\!=\!\!\!\! 
\begin{array}{c}
\scalebox{0.75}{
\begin{tikzpicture}[scale=.55,line width=1.5pt]
\foreach \i in {1,...,21} 
{ \path (.85*\i,2.5) coordinate (T\i);
 \path (.85*\i,0) coordinate (B\i);}
\draw[rounded corners=.15mm, fill= gray!25!blue!10,draw=gray!25!blue!10,line width=4pt]  (T1) -- (T21) -- (B21) -- (B1) -- cycle;
\draw[blue!30!green] (T3) .. controls +(.1,-1) and +(-.1,-1) .. (T6) ;
\draw[blue!30!green] (B3) .. controls +(.1,1) and +(-.1,1) .. (B6) ;
\draw[blue!30!green] (T6) .. controls +(.1,-.65) and +(-.1,-.65) .. (T12) ;
\draw[blue!30!green] (B6) .. controls +(.1,.65) and +(-.1,.65) .. (B12) ;
\draw[blue!30!yellow] (T1) .. controls +(.1,-1) and +(-.1,-1) .. (T4) ;
\draw[blue!30!yellow] (B1) .. controls +(.1,1) and +(-.1,1) .. (B4) ;
\draw[blue!30!yellow] (T4) .. controls +(.1,-1) and +(-.1,-1) .. (T13) ;
\draw[blue!30!yellow] (B4) .. controls +(.1,1) and +(-.1,1) .. (B13) ;
\draw[blue!30!magenta] (T2) .. controls +(.1,-1) and +(-.1,-1) .. (T5) ;
\draw[blue!30!magenta] (B2) .. controls +(.1,1) and +(-.1,1) .. (B5) ;
\draw[blue!30!magenta] (T5) .. controls +(.1,-1.25) and +(-.1,-1.25) .. (T14) ;
\draw[blue!30!magenta] (B5) .. controls +(.1,1.25) and +(-.1,1.25) .. (B14) ;
\draw[blue!30!green] (T12) -- (B12);
\draw[blue!30!yellow] (T13) -- (B13);
\draw[blue!30!magenta] (T14) -- (B14);
\draw (T7) .. controls +(.1,-.3) and +(-.1,-.3) .. (T8);
\draw (T8) .. controls +(.1,-.3) and +(-.1,-.3) .. (T9);
\draw (T9) .. controls +(.1,-.3) and +(-.1,-.3) .. (T10);
\draw (T10) .. controls +(.1,-.3) and +(-.1,-.3) .. (T11);
\draw (B7) .. controls +(.1,.3) and +(-.1,.3) .. (B8);
\draw (B8) .. controls +(.1,.3) and +(-.1,.3) .. (B9);
\draw (B9) .. controls +(.1,.3) and +(-.1,.3) .. (B10);
\draw (B10) .. controls +(.1,.3) and +(-.1,.3) .. (B11);
\draw[purple] (T15) -- (B15);
\draw[orange] (T16) -- (B16);
\draw[cyan] (T17) -- (B17);
\draw[red] (T19) .. controls +(.1,-.5) and +(-.1,-.5) .. (T20);
\draw[red] (B19) .. controls +(.1,.5) and +(-.1,.5) .. (B20);
\draw[blue] (T18) .. controls +(.1,-1) and +(-.1,-1) .. (T21);
\draw[blue] (B18) .. controls +(.1,1) and +(-.1,1) .. (B21);
\draw[red] (T20) -- (B20);
\draw[blue] (T21) -- (B21);
\foreach \i in {1,...,21} { \fill (T\i) circle (4.5pt);   \fill (B\i) circle (4.5pt);} 
\end{tikzpicture} }
\end{array}.
\end{equation*}
The following diagrams are also in $\calF^{\,\mu}_{\!\calP_{21}}(\kappa)$, 
\begin{align*}
w_2 &= \!\!\!\!
\begin{array}{c}
\scalebox{0.75}{
\begin{tikzpicture}[scale=.55,line width=1.5pt]
\foreach \i in {1,...,21} 
{ \path (.85*\i,2.5) coordinate (T\i);
 \path (.85*\i,0) coordinate (B\i);}
\draw[rounded corners=.15mm, fill= gray!25!blue!10,draw=gray!25!blue!10,line width=4pt]  (T1) -- (T21) -- (B21) -- (B1) -- cycle;
\draw[blue!30!magenta] (T1) .. controls +(.1,-1) and +(-.1,-1) .. (T4) ;
\draw[blue!30!magenta] (B1) .. controls +(.1,1) and +(-.1,1) .. (B4) ;
\draw[blue!30!green] (T2) .. controls +(.1,-1) and +(-.1,-1) .. (T5) ;
\draw[blue!30!green] (B2) .. controls +(.1,1) and +(-.1,1) .. (B5) ;
\draw[blue!30!yellow] (T3) .. controls +(.1,-1) and +(-.1,-1) .. (T6) ;
\draw[blue!30!yellow] (B3) .. controls +(.1,1) and +(-.1,1) .. (B6) ;
\draw[blue!30!magenta] (T4) -- (B4);
\draw[blue!30!green] (T5) -- (B5);
\draw[blue!30!yellow] (T6) -- (B6);
\draw (T12) .. controls +(.1,-.5) and +(-.1,-.5) .. (T13);
\draw (T13) .. controls +(.1,-.5) and +(-.1,-.5) .. (T14);
\draw (B12) .. controls +(.1,.5) and +(-.1,.5) .. (B13);
\draw (B13) .. controls +(.1,.5) and +(-.1,.5) .. (B14);
\draw (T14) .. controls +(.1,-.9) and +(-.1,-.9) .. (T18);
\draw (B14) .. controls +(.1,.9) and +(-.1,.9) .. (B18);
\draw (T18) .. controls +(.1,-.5) and +(-.1,-.5) .. (T19);
\draw (B18) .. controls +(.1,.5) and +(-.1,.5) .. (B19);
\draw[cyan] (T15) -- (B15);
\draw[purple] (T16) -- (B16);
\draw[orange] (T17) -- (B17);
\draw[blue] (T20) -- (B20);
\draw[red] (T21) -- (B21);
\foreach \i in {1,...,21} { \fill (T\i) circle (4.5pt);   \fill (B\i) circle (4.5pt);} 
\end{tikzpicture} } 
\end{array},\\
 w_3 &= \!\!\!\!
\begin{array}{c}
\scalebox{0.75}{
\begin{tikzpicture}[scale=.55,line width=1.5pt]
\foreach \i in {1,...,21} 
{ \path (.85*\i,2.5) coordinate (T\i);
 \path (.85*\i,0) coordinate (B\i);}
\draw[rounded corners=.15mm, fill= gray!25!blue!10,draw=gray!25!blue!10,line width=4pt]  (T1) -- (T21) -- (B21) -- (B1) -- cycle;
\draw[blue!30!magenta] (T1) .. controls +(.1,-1) and +(-.1,-1) .. (T3) ;
\draw[blue!30!magenta] (B1) .. controls +(.1,1) and +(-.1,1) .. (B3) ;
\draw[blue!30!magenta] (T3) .. controls +(.1,-1) and +(-.1,-1) .. (T5) ;
\draw[blue!30!magenta] (B3) .. controls +(.1,1) and +(-.1,1) .. (B5) ;
\draw[blue!30!green] (T2) .. controls +(.1,-1) and +(-.1,-1) .. (T4) ;
\draw[blue!30!green] (B2) .. controls +(.1,1) and +(-.1,1) .. (B4) ;
\draw[blue!30!green] (T4) .. controls +(.1,-1) and +(-.1,-1) .. (T6) ;
\draw[blue!30!green] (B4) .. controls +(.1,1) and +(-.1,1) .. (B6) ;
\draw[blue!30!magenta] (T5) -- (B5);
\draw[blue!30!green] (T6) -- (B6);
\draw[blue!30!yellow] (T12) -- (B12);
\draw[cyan] (T13) -- (B13);
\draw[purple] (T14) -- (B14);
\draw[orange] (T15) -- (B15);
\draw[blue] (T16) -- (B16);
\draw[red] (T17) -- (B17);
\draw (T7) .. controls +(.1,-.5) and +(-.1,-.5) .. (T8);
\draw (T8) .. controls +(.1,-.5) and +(-.1,-.5) .. (T9);
\draw (T9) .. controls +(.1,-.5) and +(-.1,-.5) .. (T10);
\draw (T10) .. controls +(.1,-.5) and +(-.1,-.5) .. (T11);
\draw (B7) .. controls +(.1,.5) and +(-.1,.5) .. (B8);
\draw (B8) .. controls +(.1,.5) and +(-.1,.5) .. (B9);
\draw (B9) .. controls +(.1,.5) and +(-.1,.5) .. (B10);
\draw (B10) .. controls +(.1,.5) and +(-.1,.5) .. (B11);
\draw (T20) .. controls +(.1,-.5) and +(-.1,-.5) .. (T21);
\draw (B20) .. controls +(.1,.5) and +(-.1,.5) .. (B21);
\foreach \i in {1,...,21} { \fill (T\i) circle (4.5pt);   \fill (B\i) circle (4.5pt);} 
\end{tikzpicture} } 
\end{array},
\end{align*}
with permutations $\sigma_{\gamma_\kappa,w_2} = (3,2,1)(6,5,4)(7,8)$ and $\sigma_{\gamma_\kappa,w_3} = (1,2)(5,4,3)(8,7,6)$, respectively, which both have cycle type $[3,3,2]$. It is easy to verify that these three diagrams satisfy the properties of Lemma~\ref{lemma:iff}. 
\end{examp}

\subsection{The partition algebra}

We now count the number of symmetric $m$-diagrams in $\calW^m_{\!\calP_k}$ that satisfy the conditions of Lemma~\ref{lemma:iff}. 

\begin{definition} \label{def:divisor}
Let $\kappa=[\kappa_1,\ldots,\kappa_\ell]$ be an integer partition of $k$ into $\ell$ parts. We say that a \emph{divisor} of $\kappa$ is a composition $\nu = [\nu_1, \ldots, \nu_\ell]$ such that $\nu_i | \kappa_i$ for all $i=1,\ldots, \ell$, and we let $\nu | \kappa$ indicate that $\nu$ is a divisor of $\kappa$. 
\end{definition}

\begin{examp}
The following diagrams depict the eight divisors of $\kappa = [6,5,1]\vdash 12$.
\begin{equation*}
\begin{array}{c c c c} 
\begin{array}{c}
\scalebox{0.65}{
\begin{tikzpicture}[scale=.55, line width=1.5]
\draw (1,3) -- (1,0) -- (0,0) -- (0,3) -- (6,3) -- (6,2) -- (0,2);
\draw (5,3) -- (5,1) -- (0,1);
\draw (2,1) -- (2,3); \draw (3,1) -- (3,3); \draw (4,1) -- (4,3);
\end{tikzpicture}} 
\end{array}, & 
\begin{array}{c}
\scalebox{0.65}{
\begin{tikzpicture}[scale=.55, line width=1.5]
\draw (1,3) -- (1,0) -- (0,0) -- (0,3) -- (3,3) -- (3,2) -- (0,2);
\draw (3,2) -- (5,2) -- (5,1) -- (0,1);
\draw (2,1) -- (2,3);  \draw (3,1) -- (3,2);  \draw (4,1) -- (4,2); 
\draw[dotted, line width = 1] (3,3) -- (6,3) -- (6,2) -- (5,2) -- (5,3);
\draw[dotted, line width = 1] (4,2) -- (4,3); 
\end{tikzpicture}} 
\end{array}, &
\begin{array}{c}
\scalebox{0.65}{
\begin{tikzpicture}[scale=.55, line width=1.5]
\draw (1,3) -- (1,0) -- (0,0) -- (0,3) -- (2,3) -- (2,2) -- (0,2);
\draw (2,2) -- (5,2) -- (5,1) -- (0,1);
\draw (2,1) -- (2,3);  \draw (3,1) -- (3,2);  \draw (4,1) -- (4,2); 
\draw[dotted, line width = 1] (2,3) -- (6,3) -- (6,2) -- (5,2) -- (5,3);
\draw[dotted, line width = 1] (4,2) -- (4,3); \draw[dotted, line width = 0.55] (3,2) -- (3,3); 
\end{tikzpicture}} 
\end{array}, &
\begin{array}{c}
\scalebox{0.65}{
\begin{tikzpicture}[scale=.55, line width=1.5]
\draw (1,3) -- (1,0) -- (0,0) -- (0,3) -- (1,3) -- (1,2) -- (0,2);
\draw (1,2) -- (5,2) -- (5,1) -- (0,1);
\draw (2,1) -- (2,2);  \draw (3,1) -- (3,2);  \draw (4,1) -- (4,2); 
\draw[dotted, line width = 1] (1,3) -- (6,3) -- (6,2) -- (5,2) -- (5,3);
\draw[dotted, line width = 1] (4,2) -- (4,3); \draw[dotted, line width = 1] (3,2) -- (3,3); \draw[dotted, line width = 1] (2,2) -- (2,3); 
\end{tikzpicture}} 
\end{array},  \\
\begin{array}{c}
\scalebox{0.65}{
\begin{tikzpicture}[scale=.55, line width=1.5]
\draw (1,3) -- (1,0) -- (0,0) -- (0,3) -- (6,3) -- (6,2) -- (0,2);
\draw (0,1) -- (1,1);
\draw (2,2) -- (2,3); \draw (3,2) -- (3,3); \draw (4,2) -- (4,3); \draw (5,2) -- (5,3);
\draw[dotted, line width = 1] (5,2) -- (5,1) -- (1,1); \draw[dotted, line width = 1] (2,1) -- (2,2); \draw[dotted, line width = 1] (3,1) -- (3,2); \draw[dotted, line width = 1] (4,1) -- (4,2);
\end{tikzpicture}} 
\end{array}, & 
\begin{array}{c}
\scalebox{0.65}{
\begin{tikzpicture}[scale=.55, line width=1.5]
\draw (1,3) -- (1,0) -- (0,0) -- (0,3) -- (3,3) -- (3,2) -- (0,2);
\draw (1,1) -- (0,1);
\draw (2,2) -- (2,3); \draw (3,2) -- (3,3); 
\draw[dotted, line width = 1] (5,2) -- (5,1) -- (1,1); \draw[dotted, line width = 1] (2,1) -- (2,2); \draw[dotted, line width = 1] (3,1) -- (3,2) -- (5,2); \draw[dotted, line width = 1] (4,1) -- (4,2);
\draw[dotted, line width = 1] (3,3) -- (6,3) -- (6,2) -- (5,2) -- (5,3);
\draw[dotted, line width = 1] (4,2) -- (4,3); 
\end{tikzpicture}} 
\end{array}, &
\begin{array}{c}
\scalebox{0.65}{
\begin{tikzpicture}[scale=.55, line width=1.5]
\draw (1,3) -- (1,0) -- (0,0) -- (0,3) -- (2,3) -- (2,2) -- (0,2);
\draw (1,1) -- (0,1);
\draw (2,2) -- (2,3); 
\draw[dotted, line width = 1] (2,3) -- (6,3) -- (6,2) -- (5,2) -- (5,3);
\draw[dotted, line width = 1] (4,1) -- (4,3); \draw[dotted, line width = 1] (3,1) -- (3,3); 
\draw[dotted, line width = 1] (5,2) -- (5,1) -- (1,1); \draw[dotted, line width = 1] (2,1) -- (2,2) -- (5,2); 
\end{tikzpicture}} 
\end{array}, &
\begin{array}{c}
\scalebox{0.65}{
\begin{tikzpicture}[scale=.55, line width=1.5]
\draw (1,3) -- (1,0) -- (0,0) -- (0,3) -- (1,3) -- (1,2) -- (0,2);
\draw (1,1) -- (0,1);
\draw[dotted, line width = 1] (1,3) -- (6,3) -- (6,2) -- (5,2) -- (5,3);
\draw[dotted, line width = 1] (4,1) -- (4,3); \draw[dotted, line width = 1] (3,1) -- (3,3); 
\draw[dotted, line width = 1] (1,2) -- (5,2) -- (5,1) -- (1,1); \draw[dotted, line width = 1] (2,1) -- (2,3); 
\end{tikzpicture}} 
\end{array}. 
\end{array}
\end{equation*}

\end{examp}

\begin{examp}
Each diagram in $\calF^{\,\mu}_{\calP_k}(\kappa)$ determines a divisor of $\kappa$: by Lemma~\ref{lemma:iff} (a) the blocks of $w$ are of the form $y_{d_i,\kappa_i}$ where $d_i | \kappa_i$, and the collection $\nu = [d_1,\ldots,d_\ell]$ is a divisor of $\kappa$. For the three diagrams shown in Example~\ref{ex:fixedpoints}, the corresponding divisors of $\kappa$ are 
\begin{equation*}
\mu_1 = 
\begin{array}{c}
\scalebox{0.65}{
\begin{tikzpicture}[scale=.55, line width=1.5]
\draw (0,5) -- (3,5) -- (3,6) -- (0,6) -- (0,0) -- (2,0) -- (2,4) -- (3,4) -- (3,2) -- (0,2);
\draw (1,6) -- (1,0); \draw (0,1) -- (2,1); \draw (0,3) -- (3,3); \draw (0,4) -- (2,4); \draw (2,6) -- (2,5);
\draw[dotted, line width = 1] (3,6) -- (6,6) -- (6,5) -- (3,5);
\draw[dotted, line width = 1] (5,6) -- (5,4) -- (2,4) -- (2,5) -- (3,5) -- (3,4);
\draw[dotted, line width = 1] (4,4) -- (4,6);
\end{tikzpicture}} 
\end{array} \!\!, \quad
\mu_2 =  
\begin{array}{c}
\scalebox{0.65}{
\begin{tikzpicture}[scale=.55, line width=1.5]
\draw (0,5) -- (5,5) -- (5,4) -- (3,4) -- (3,6) -- (0,6) -- (0,0) -- (2,0) -- (2,1) -- (0,1);
\draw (1,0) -- (1,6); \draw (0,2) -- (3,2) -- (3,3) -- (0,3); \draw (2,6) -- (2,4); \draw (0,4) -- (3,4); \draw (2,2) -- (2,3); \draw (4,4) -- (4,5);
\draw[dotted, line width = 1] (3,6) -- (6,6) -- (6,5) -- (3,5);
\draw[dotted, line width = 1] (4,6) -- (4,5); 
\draw[dotted, line width = 1] (5,6) -- (5,5);
\draw[dotted, line width = 1] (2,1) -- (2,2); 
\draw[dotted, line width = 1] (2,3) -- (2,4); 
\draw[dotted, line width = 1] (3,3) -- (3,4);
\end{tikzpicture}} 
\end{array} \!\!, \quad
\mu_3 = 
\begin{array}{c}
\scalebox{0.65}{
\begin{tikzpicture}[scale=.55, line width=1.5]
\draw (0,5) -- (2,5) -- (2,6) -- (0,6) -- (0,0) -- (1,0) -- (1,6);
\draw (0,4) -- (3,4) -- (3,2) -- (0,2); \draw (2,4) -- (2,1) -- (0,1); \draw (0,3) -- (3,3);
\draw[dotted, line width = 1] (2,5) -- (6,5) -- (6,6) -- (0,6);
\draw[dotted, line width = 1] (2,5) -- (2,6); 
\draw[dotted, line width = 1] (3,5) -- (3,6); 
\draw[dotted, line width = 1] (4,5) -- (4,6); 
\draw[dotted, line width = 1] (5,5) -- (5,6);
\draw[dotted, line width = 1] (1,0) -- (2,0) -- (2,1);
\draw[dotted, line width = 1] (3,4) -- (5,4) -- (5,5);
\draw[dotted, line width = 1] (2,4) -- (2,5); 
\draw[dotted, line width = 1] (3,4) -- (3,5); 
\draw[dotted, line width = 1] (4,4) -- (4,5);
\end{tikzpicture}} 
\end{array} \!\!.
\end{equation*}
\end{examp}

\begin{prop} \label{prop:coeff} 
Let $\kappa\vdash k$ and $\mu\vdash m$. The number of diagrams in $\calF^{\,\mu}_{\calP_k}(\kappa)$ is given by
\begin{equation*}
\F_{\Pb_k(n)}^{\mu,\kappa} = \sum_{\nu | \kappa}  \prod_{i} \sum_{t} \stirling{\m_i(\nu)}{t}\binom{t}{\m_i(\mu)} i^{\m_i(\nu)-t},
\end{equation*}
where the outer sum is over divisors of $\kappa$ and $\m_i(\nu)$ is the number of parts of size $i$ in $\nu$. 
\end{prop}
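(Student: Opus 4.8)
The plan is to count the diagrams in $\calF^{\,\mu}_{\calP_k}(\kappa)$ directly from the characterization in Lemma~\ref{lemma:iff}, organizing the count according to the divisor of $\kappa$ that each fixed diagram determines. First I would observe that by Lemma~\ref{lemma:iff}(a) every $w \in \calF^{\,\mu}_{\calP_k}(\kappa)$ assigns to the $i$th $\kappa$-block the form $y_{d_i,\kappa_i}$ for some divisor $d_i \mid \kappa_i$, so that $w$ determines a divisor $\nu = [d_1,\ldots,d_\ell]$ of $\kappa$ in the sense of Definition~\ref{def:divisor}. Partitioning the count over the choice of $\nu \mid \kappa$ produces the outer sum, and it then remains to count, for each fixed $\nu$, the number of ways to complete $w$ by specifying the inter-block connections and the propagating groups subject to conditions (b) and (c).

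Next, since Lemma~\ref{lemma:iff}(b)(i) forbids connections between $\kappa$-blocks of different types, the blocks of each type $i$ may be treated independently, which yields the product $\prod_i$ over block types. Fix a type $i$ and recall there are $\m_i(\nu)$ $\kappa$-blocks of this type. Conditions (b)(ii)--(iii) state that whenever two type-$i$ blocks are joined, their $i$ components are matched by a bijection with no further connections; grouping the blocks according to which are mutually connected thus gives a set partition of the $\m_i(\nu)$ blocks into some number $t$ of connected groups, contributing the factor $\stirling{\m_i(\nu)}{t}$. By Lemma~\ref{lemma:iff}(c) exactly $\m_i(\mu)$ of these $t$ groups must propagate, and choosing them contributes $\binom{t}{\m_i(\mu)}$; the identity $\sum_i i\,\m_i(\mu) = m$ confirms that the propagating groups account for all $m$ propagating blocks and, via Lemma~\ref{lemma:perm}, that $\sigma_{\gamma_\kappa,w}$ has cycle type $\mu$.

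The crux of the argument, and the step I expect to be the main obstacle, is establishing the factor $i^{\m_i(\nu)-t}$ recording the number of admissible connection patterns within the groups. Here I would show that the rotation $\gamma_{\kappa_j}$ acts on the $i$ components of a type-$i$ block as a single $i$-cycle, so that a $\gamma_\kappa$-fixed matching between the components of two joined blocks must commute with this cyclic shift and is therefore one of exactly $i$ shifts indexed by $\ZZ/i\ZZ$. Consequently, for a connected group of $g$ blocks, fixing one block as a reference leaves an independent shift in $\ZZ/i\ZZ$ for each of the remaining $g-1$ blocks (equivalently, each of the $i$ merged super-components collects one component from every block of the group), giving $i^{g-1}$ patterns per group and hence $i^{\,\m_i(\nu)-t}$ across any partition into $t$ groups, since the exponents $g_a-1$ sum to $\m_i(\nu)-t$ independently of the block sizes. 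Care is needed to verify that every such choice of shifts keeps the group connected, so that no group is inadvertently split, and that distinct shift tuples yield distinct diagrams, so the count is neither over- nor under-counted. Assembling the three factors $\stirling{\m_i(\nu)}{t}$, $\binom{t}{\m_i(\mu)}$, and $i^{\,\m_i(\nu)-t}$ for each type, taking the product over $i$, and summing over $\nu \mid \kappa$ then yields the stated formula.
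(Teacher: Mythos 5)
Your proposal is correct and follows essentially the same route as the paper's proof: partition the count over divisors $\nu\mid\kappa$ via Lemma~\ref{lemma:iff}(a), treat each type $i$ independently, use $\stirling{\m_i(\nu)}{t}$ for the grouping into $t$ connected sets, $\binom{t}{\m_i(\mu)}$ for the choice of propagating groups, and $i$ choices per connecting edge to obtain $i^{\m_i(\nu)-t}$. Your justification of the factor $i$ (a $\gamma_\kappa$-equivariant matching of components must be one of the $i$ cyclic shifts) is a slightly more explicit version of the paper's observation that there are $i$ ways to connect two $\kappa$-blocks of type $i$, but the argument is the same.
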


\begin{proof}
Given a divisor $\nu | \kappa$ consider the symmetric diagram $w$ whose $\kappa_i$ block is of the form $y_{\nu_i, \kappa_i}$. We count the number of ways of making $w$ into a symmetric $m$-diagram in $\calF^{\,\mu}_{\calP_k}(\kappa)$. By Lemma~\ref{lemma:iff} (c), for some $i$ and $w \in \calF^{\,\mu}_{\calP_k}(\kappa)$ there must be $\m_i(\mu)$ sets of connected $\kappa$-blocks of type $i$ that propagate. The number of available $\kappa$-blocks of type $i$ in $w$ is given by $\m_i(\nu)$. If $\m_i(\nu) < \m_i(\mu)$, there are not enough $\kappa$-blocks of type $i$ to propagate and the sum gives zero. Suppose $\m_i(\nu) \geq \m_i(\mu)$. To construct $w$ we choose a set partition of these $\m_i(\nu)$ $\kappa$-blocks of type $i$ into $t$ blocks, where $\m_i(\mu) \leq t \leq \m_i(\nu)$. There are $\stirling{\m_i(\nu)}{t}$  ways to do this. Then we  choose $\m_i(\mu)$ of these blocks to propagate in $\binom{t}{\m_i(\mu)}$ ways. The remaining blocks do not propagate. 

We now count the number of ways of connecting the individual $\kappa$-blocks. There are $i$ ways of connecting two $\kappa$-blocks of type $i$. For instance, there are three ways of connecting two $\kappa$-blocks of type $3$:
\begin{align*}
\begin{array}{c}
\scalebox{0.85}{
\begin{tikzpicture}[scale=.55,line width=1pt] 
\foreach \i in {1,2,3,6,7,8} 
{ \path (\i,2) coordinate (T\i); } 
\draw (T1) .. controls +(.1,-.6) and +(-.1,-.6) .. (T6) ;
\draw (T2) .. controls +(.1,-1) and +(-.1,-1) .. (T7) ;
\draw (T3) .. controls +(.1,-1.6) and +(-.1,-1.6) .. (T8) ;
\foreach \i in {1,2,3,6,7,8} 
{ \fill (T\i) circle (4.5pt); } 
\end{tikzpicture} }
\end{array}, 
\begin{array}{c}
\scalebox{0.85}{
\begin{tikzpicture}[scale=.55,line width=1pt] 
\foreach \i in {1,2,3,6,7,8}
{ \path (\i,2) coordinate (T\i); } 
\draw (T1) .. controls +(.1,-.6) and +(-.1,-.6) .. (T7) ;
\draw (T2) .. controls +(.1,-1) and +(-.1,-1) .. (T8) ;
\draw (T3) .. controls +(.1,-1.6) and +(-.1,-1.6) .. (T6) ;
\foreach \i in {1,2,3,6,7,8} 
{ \fill (T\i) circle (4.5pt); } 
\end{tikzpicture} }
\end{array},
\begin{array}{c}
\scalebox{0.85}{
\begin{tikzpicture}[scale=.55,line width=1pt] 
\foreach \i in {1,2,3,6,7,8} 
{ \path (\i,2) coordinate (T\i); } 
\draw (T1) .. controls +(.1,-.6) and +(-.1,-.6) .. (T8) ;
\draw (T2) .. controls +(.1,-1) and +(-.1,-1) .. (T6) ;
\draw (T3) .. controls +(.1,-1.6) and +(-.1,-1.6) .. (T7) ;
\foreach \i in {1,2,3,6,7,8} 
{ \fill (T\i) circle (4.5pt); } 
\end{tikzpicture} }
\end{array}.
\end{align*}
A set partition of $\m_i(\nu)$ $\kappa$-blocks of type $i$ into $t$ blocks can be depicted as a one-line set-partition diagram where each edge is labelled by the $i$ ways to connect two $\kappa$-blocks. For instance, if $i=3$ and $\m_i(\nu) = 5$, then the following represents a set partition of $5$ $\kappa$-blocks of type $3$ into two blocks:
\begin{equation*}
\begin{array}{c}
\scalebox{0.85}{
\begin{tikzpicture}[scale=.55,line width=1pt] 
\foreach \i in {1,2,3,5,6,7,9,10,11,13,14,15,17,18,19} 
{ \path (\i,2) coordinate (B\i);
\path (\i,1.5) coordinate (T\i); } 
\draw (T2) .. controls +(.1,-0.8) and +(-.1,-0.8) .. (T6) ;
\draw (T6) .. controls +(.1,-2) and +(-.1,-2) .. (T18) ;
\draw (T10) .. controls +(.1,-0.8) and +(-.1,-0.8) .. (T14) ;
\path (4,0.5) node {$3$};
\path (12,0.5) node {$3$};
\path (12,-0.5) node {$3$};
\foreach \i in {1,2,3,5,6,7,9,10,11,13,14,15,17,18,19} 
{ \fill (B\i) circle (4.5pt); } 
\end{tikzpicture} }
\end{array}.
\end{equation*}
Thus, the number of ways of connecting $\m_i(\nu)$ $\kappa$-blocks of type $i$ into $t$ blocks is given by $i^{\m_i(\nu)-t}$. The inner sum is over $\m_i(\mu) \leq t \leq \m_i(\nu)$, but $\stirling{\m_i(\nu)}{t}\binom{t}{\m_i(\mu)}$ is identically zero outside this interval, so we can sum over all $t$. The connections between (and propagation of) blocks of each type are independent, so taking the product over all $i$ and summing over the divisors $\nu$ of $\kappa$ completes the proof. 
\end{proof}

\begin{examp} \label{ex:coeff}
Let $\kappa = [2,1]$ and $\mu = [1]$. The two divisors of $\kappa$ are $\kappa$ itself and the trivial divisor $\nu = [1,1]$. The number of symmetric $1$-diagrams in $\calF^{\,\mu}_{\calP_3}(\kappa)$ is 
\begin{equation*}
\left(\sum_{t=1}^1 \stirling{1}{t}\binom{t}{1}1^{1-t}\right)\left(\sum_{t=0}^1 \stirling{1}{t}\binom{t}{0}2^{1-t}\right) + \sum_{t=1}^2 \stirling{2}{t}\binom{t}{1}1^{2-t} = (1)(0+1) + (1+2) = 4. 
\end{equation*}
Indeed, 
\begin{equation*} 
\calF^{\,\mu}_{\calP_3}(\kappa) = \left\{
\begin{array}{c}
\scalebox{0.8}{
\begin{tikzpicture}[scale=.55,line width=1.35pt]
\foreach \i in {1,...,3} 
{ \path (\i,1.75) coordinate (T\i);
 \path (\i,0) coordinate (B\i);}
\draw[rounded corners=.15mm,  fill= gray!25,draw=gray!25,line width=4pt]  (T1) -- (T3) -- (B3) -- (B1) -- cycle;
\draw (T3) -- (B3);
\foreach \i in {1,...,3} { \fill (T\i) circle (4.5pt);   \fill (B\i) circle (4.5pt);} 
\end{tikzpicture} }
\end{array}, 
\begin{array}{c}
\scalebox{0.8}{
\begin{tikzpicture}[scale=.55,line width=1.35pt]
\foreach \i in {1,...,3} 
{ \path (\i,1.75) coordinate (T\i);
 \path (\i,0) coordinate (B\i);}
\draw[rounded corners=.15mm,  fill= gray!25,draw=gray!25,line width=4pt]  (T1) -- (T3) -- (B3) -- (B1) -- cycle;
\draw (T1) .. controls +(.1,-.6) and +(-.1,-.6) .. (T2);
\draw (B1) .. controls +(.1,.6) and +(-.1,.6) .. (B2);
\draw (T2) .. controls +(.1,-.6) and +(-.1,-.6) .. (T3) ;
\draw (B2) .. controls +(.1,.6) and +(-.1,.6) .. (B3) ;
\draw (T3) -- (B3);
\foreach \i in {1,...,3} { \fill (T\i) circle (4.5pt);   \fill (B\i) circle (4.5pt);} 
\end{tikzpicture} }
\end{array}, 
\begin{array}{c}
\scalebox{0.8}{
\begin{tikzpicture}[scale=.55,line width=1.35pt]
\foreach \i in {1,...,3} 
{ \path (\i,1.75) coordinate (T\i);
 \path (\i,0) coordinate (B\i);}
\draw[rounded corners=.15mm,  fill= gray!25,draw=gray!25,line width=4pt]  (T1) -- (T3) -- (B3) -- (B1) -- cycle;
\draw (T1) .. controls +(.1,-.6) and +(-.1,-.6) .. (T2);
\draw (B1) .. controls +(.1,.6) and +(-.1,.6) .. (B2);
\draw (T2) -- (B2);
\foreach \i in {1,...,3} { \fill (T\i) circle (4.5pt);   \fill (B\i) circle (4.5pt);} 
\end{tikzpicture} }
\end{array},
\begin{array}{c}
\scalebox{0.8}{
\begin{tikzpicture}[scale=.55,line width=1.35pt]
\foreach \i in {1,...,3} 
{ \path (\i,1.75) coordinate (T\i);
 \path (\i,0) coordinate (B\i);}
\draw[rounded corners=.15mm, fill= gray!25,draw=gray!25,line width=4pt]  (T1) -- (T3) -- (B3) -- (B1) -- cycle;
\draw (T1) .. controls +(.1,-.6) and +(-.1,-.6) .. (T2);
\draw (B1) .. controls +(.1,.6) and +(-.1,.6) .. (B2);
\draw (T3) -- (B3);
\foreach \i in {1,...,3} { \fill (T\i) circle (4.5pt);   \fill (B\i) circle (4.5pt);} 
\end{tikzpicture} }
\end{array} \right\}. 
\end{equation*}
The first diagram corresponds to the divisor $\kappa$ while the others correspond to the divisor $\nu$. This coefficient appears in the factorization of the character table for $\Pb_3(n)$ in Example~\ref{ex:chartables} (a). 
\end{examp}

Combining Proposition~\ref{prop:coeff} with~\eqref{eqn:charformulageneral1}  gives our main result.

\begin{thm} \label{thm:chars} 
If $\lambda$ is a partition of $n$ such that $|\lambda^\ast| = m$, and $\kappa$ is an integer partition such that $|\kappa| \leq k$, then
\begin{equation*}
\chi_{\Pb_k(n)}^\lambda(\gamma_\kappa \ot \e^{\ot s}) = \sum_{\substack{\mu \vdash m}} \sum_{\nu | \kappa} \prod_{i} \sum_{t} \stirling{\m_i(\nu)}{t}\binom{t}{\m_i(\mu)}i^{\m_i(\nu)-t} \,\chi_{\Sb_m}^{\lambda^\ast}(\gamma_\mu),
\end{equation*}
where the first sum is over partitions $\mu$ of $m$, the second sum is over divisors $\nu$ of $\kappa$, and $\m_i(\nu)$ is the number of parts of $\nu$ equal to $i$. 
\end{thm}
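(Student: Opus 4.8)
The plan is to chain the three results already in place: the reduction formula~\eqref{eqn:charformulageneral1}, the character expansion of Corollary~\ref{cor:char1}, and the explicit count of Proposition~\ref{prop:coeff}. I would split the argument according to whether $|\kappa| \geq m$ or $|\kappa| < m$, and then verify that the single displayed formula covers both cases simultaneously.

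First I would treat $|\kappa| \geq m$. Here~\eqref{eqn:charformulageneral1} collapses the character on $\gamma_\kappa \ot \e^{\ot s}$ to $\chi_{\Pb_{|\kappa|}(n)}^\lambda(\gamma_\kappa)$, moving us into the algebra $\Pb_{|\kappa|}(n)$ in which $\kappa$ is a partition of the full index $|\kappa|$. Corollary~\ref{cor:char1}, applied with $k$ replaced by $|\kappa|$, then gives $\chi_{\Pb_{|\kappa|}(n)}^\lambda(\gamma_\kappa) = \sum_{\mu \vdash m} \F_{\Pb_{|\kappa|}(n)}^{\mu,\kappa}\,\chi^{\lambda^\ast}_{\Sb_m}(\gamma_\mu)$, and substituting the closed form of Proposition~\ref{prop:coeff} for each coefficient $\F_{\Pb_{|\kappa|}(n)}^{\mu,\kappa}$ yields precisely the claimed product-and-double-sum. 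I would note here that the count in Proposition~\ref{prop:coeff} is performed inside $\calW^m_{\!\calP_{|\kappa|}}$ with respect to $\gamma_\kappa$, which is exactly the fixed-point set $\calF^{\,\mu}_{\!\calP_{|\kappa|}}(\kappa)$ entering the character of $\Pb_{|\kappa|}(n)$, so the substitution is legitimate.

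Next I would handle $|\kappa| < m$, where~\eqref{eqn:charformulageneral1} forces the left-hand side to be zero; the task is to confirm the right-hand side also vanishes so that the formula holds uniformly over all $|\kappa| \leq k$. I would argue termwise: for a fixed divisor $\nu \mid \kappa$ and a fixed $i$, the inner factor $\sum_t \stirling{\m_i(\nu)}{t}\binom{t}{\m_i(\mu)} i^{\m_i(\nu)-t}$ can be nonzero only if some $t$ meets both $\m_i(\mu) \leq t$ (from the binomial) and $t \leq \m_i(\nu)$ (from the Stirling number), hence only if $\m_i(\mu) \leq \m_i(\nu)$. Should this hold for every $i$, then $m = \sum_i i\,\m_i(\mu) \leq \sum_i i\,\m_i(\nu) = |\nu| \leq |\kappa|$, where the last inequality uses $\nu_j \mid \kappa_j$ and hence $\nu_j \leq \kappa_j$. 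Thus $|\kappa| < m$ makes every summand vanish, and both sides agree.

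The main obstacle I anticipate is not a hard computation but the bookkeeping across the index change from $k$ to $|\kappa|$: I must be careful that $\F_{\Pb_k(n)}^{\mu,\kappa}$, together with the combinatorics of Lemma~\ref{lemma:iff} underlying Proposition~\ref{prop:coeff}, depends only on $\kappa$ and $\mu$ and not separately on the ambient $k$, so that the reduction in~\eqref{eqn:charformulageneral1} delivers exactly the quantity that Proposition~\ref{prop:coeff} evaluates. With that consistency confirmed and the termwise vanishing argument above, the two cases merge into the single formula of the statement.
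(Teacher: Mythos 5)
Your proposal is correct and follows exactly the route the paper takes: the paper's entire proof of Theorem~\ref{thm:chars} is the one-line observation that Proposition~\ref{prop:coeff} combined with~\eqref{eqn:charformulageneral1} (via Corollary~\ref{cor:char1}) gives the result. Your additional termwise verification that the right-hand side vanishes when $|\kappa| < m$, and your check that $\F_{\Pb_k(n)}^{\mu,\kappa}$ depends only on $\kappa$ and $\mu$, are details the paper leaves implicit but are worth having spelled out.
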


\begin{rems} \label{rems:specialcases2} \setcounter{equation}{\value{equation}-1}
\hspace{0.5cm}
\begin{enumerate}[label=\rm{(R\arabic*)}]
\item A recursive Murnaghan-Nakayama rule for computing $\chi^\lambda_{\Pb_k(n)}(\gamma_\kappa \ot \e^{\ot s})$ is given in \cite{Ha}. The closed formula in Theorem~\ref{thm:chars} in terms of the symmetric group character $\chi^{\lambda^\ast}_{\Sb_m}$ is new. 

\item When $|\lambda^\ast|=k$, the only divisor of $\kappa$ that contributes to the sum is $\kappa$ itself, and the only partition $\mu \vdash k$ that contributes to the sum is $\mu =\kappa$. Hence 
\begin{equation*}
\chi^{\lambda}_{\Pb_k(n)}(\gamma_\kappa) = \prod_{i} \stirling{\m_i(\kappa)}{\m_i(\kappa)}\binom{\m_i(\kappa)}{\m_i(\kappa)} \chi^{\lambda^*}_{\Sb_k}(\gamma_\kappa) = \chi^{\lambda^*}_{\Sb_k}(\gamma_\kappa).
\end{equation*} 

\item When $|\lambda^\ast| = 0$,  we have  $\mu=\emptyset$ and $\chi^{\emptyset}_{\Sb_0}(\gamma_{\emptyset}) = 1$, so the character formula specializes to 
\begin{equation*} 
\chi^{\lambda}_{\Pb_k(n)}(\gamma_\kappa \ot \e^{\ot s}) = \sum_{\nu | \kappa} \prod_{i}\sum_{t \geq 0} \stirling{\m_i(\nu)}{t}i^{\m_i(\nu)-t}.
\end{equation*}
This is a new formula for this  character value, which is studied in \cite[Thm.~9]{FaHa} and used there to  prove a ``second orthogonality relation" for the characters of $\Pb_k(n)$ and compute the joint mixed moments of the number of fixed points of $\sigma^i$ for $\sigma \in \Sb_n$. 
\end{enumerate}
\end{rems}

\subsection{Subalgebras}

We now count the number of symmetric $m$-diagrams in $\Wm$ that satisfy the conditions of Lemma~\ref{lemma:iff}, where $\A_k$ is one of the subalgebras of $\Pb_k(n)$. We first consider the non-planar algebras, giving new character formulas for the Brauer and rook-Brauer algebras, and the known character formula obtained in \cite[Prop.~3.5]{So} for the rook monoid algebra. 

\begin{thm} If $\A_k$ is one of the non-planar subalgebras of $\Pb_k(n)$ and $\lambda \in \Lambda^{\A_k}_n$ with $|\lambda^\ast| = m$, then 
\begin{equation*}
\chi^\lambda_{\A_k}(\gamma_\kappa \ot \e^{\ot s})  = \sum_{\mu \vdash m} \F_{\!\A_k}^{\mu,\kappa}\,\chi^{\lambda^\ast}_{\Sb_m}(\gamma_\mu),
\end{equation*}
where $\kappa$ is a partition such that $|\kappa| + 2s = k$ for the Brauer algebra and $|\kappa| + s = k$ for the others.
The coefficients $\F_{\!\A_k}^{\mu,\kappa}$ for the Brauer, rook-Brauer, and rook monoid algebras, respectively, are given by,
\begin{enumerate}[label=\rm{(\alph*)}]
\item \hspace{.01cm} $\F_{\B_k(n)}^{\mu,\kappa} \hspace{.05cm} =\hspace{.075cm} \displaystyle{\prod_{i} \binom{\m_i(\kappa)}{\m_i(\mu)} 
\sum_t \binom{\ds_i(\kappa,\mu)}{2t} (2t-1)!!\ i^{t}\left(\frac{1+(-1)^i}{2}\right)^{\ds_i(\kappa,\mu)-2t},}$

\item $\F_{\RB_k(n)}^{\mu,\kappa} = \displaystyle{\prod_{i} \binom{\m_i(\kappa)}{\m_i(\mu)}\sum_t \binom{\ds_i(\kappa,\mu)}{2t} (2t-1)!!\ i^{t}\left(\frac{3+(-1)^i}{2}\right)^{\ds_i(\kappa,\mu)-2t},}$

\item $\F_{\R_k}^{\mu,\kappa} = \displaystyle{\prod_{i} \binom{\m_i(\kappa)}{\m_i(\mu)},}$

\end{enumerate}
where $\ds_i(\kappa,\mu) = \m_i(\kappa)-\m_i(\mu)$ and for {\rm (a)} we adopt the convention $0^0=1$ as in \cite{Knu}.
\end{thm}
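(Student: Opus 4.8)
The plan is to mirror the proof of Proposition~\ref{prop:coeff}, but to exploit the block-size restrictions of each subalgebra to cut down drastically on the admissible fixed diagrams. The displayed character identity itself is immediate: combining Corollary~\ref{cor:char1} with~\eqref{eqn:charformulageneral1} already gives $\chi^\lambda_{\A_k}(\gamma_\kappa\ot\e^{\ot s})=\sum_{\mu\vdash m}\F^{\mu,\kappa}_{\!\A_k}\chi^{\lambda^\ast}_{\Sb_m}(\gamma_\mu)$ with $\F^{\mu,\kappa}_{\!\A_k}=|\calF^{\,\mu}_{\!\calA_k}(\kappa)|$, so all the content lies in counting the symmetric $m$-diagrams $w\in\Wm$ that are fixed by $\gamma_\kappa$ and whose twist $\sigma_{\gamma_\kappa,w}$ has cycle type $\mu$. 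I would quote Lemma~\ref{lemma:iff} as the structure theorem for $\calF^{\,\mu}_{\!\calA_k}(\kappa)$: each $\kappa_i$-block of $\tp(w)$ is some $y_{d_i,\kappa_i}$, connected $\kappa$-blocks share a common type and match their components bijectively, and exactly $\m_i(\mu)$ connected sets of type $i$ propagate.

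The key reduction is to record, for each subalgebra, which of these configurations survive the size restriction on blocks. For $\B_k(n)$, $\RB_k(n)$, and $\R_k$ a propagating block of $w$ joins a top vertex to its mirror and so is a singleton of $\tp(w)$; I would show that a connected set of type $i$ that propagates therefore produces $i$ singleton blocks, which forces the set to consist of a single all-singleton part $y_{i,i}$ (any two connected blocks, or any block with components of size $>1$, would create a combined component of size $\ge 2$). Consequently the propagating parts are exactly $\m_i(\mu)$ of the $\m_i(\kappa)$ parts of $\kappa$ of size $i$, chosen in $\binom{\m_i(\kappa)}{\m_i(\mu)}$ ways, each contributing an $i$-cycle to $\sigma_{\gamma_\kappa,w}$ by Lemma~\ref{lemma:perm}. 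This already gives the rook-monoid count, since in $\R_k$ every block is a singleton and the remaining $\ds_i(\kappa,\mu)$ non-propagating parts admit no further connections, so only the propagation choice remains.

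For $\B_k(n)$ and $\RB_k(n)$ I would then enumerate the non-propagating part of $w$. Bounding combined-component sizes by $2$ in Lemma~\ref{lemma:iff}(b) shows that connections among non-propagating parts occur only between equal-size all-singleton blocks, so the count factors over the part size $i$. Among the $\ds_i(\kappa,\mu)$ non-propagating parts of size $i$, I would choose $2t$ of them ($\binom{\ds_i(\kappa,\mu)}{2t}$ ways), match these into $t$ cross-pairs ($(2t-1)!!$ ways), and connect the two all-singleton size-$i$ blocks in each pair in $i$ ways ($i^{t}$, exactly the count illustrated in Proposition~\ref{prop:coeff}). Each of the $\ds_i(\kappa,\mu)-2t$ parts left unpaired must be a single $y_{d,i}$; by Proposition~\ref{prop:4a} the admissible divisors are $d=i$ (all singletons) and, when $i$ is even, $d=i/2$ (internal $2$-blocks). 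For $\B_k(n)$ only $2$-blocks are permitted, so the all-singleton option is forbidden and each unpaired part contributes $\bigl(\tfrac{1+(-1)^i}{2}\bigr)$; for $\RB_k(n)$ both options are allowed, giving $\bigl(\tfrac{3+(-1)^i}{2}\bigr)$. Summing over $t$ and taking the product over $i$ yields formulas (a)--(c).

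The main obstacle will be the structural reduction underlying the second and third steps: rigorously ruling out the exotic configurations that are permitted for the full partition algebra but excluded here. Specifically I must verify, from the component-size bookkeeping in Lemma~\ref{lemma:iff}(b), that (i) a propagating connected set is necessarily a single all-singleton part, and (ii) every non-propagating connected set is either a single part $y_{i,i}$ or $y_{i/2,i}$ or a matched pair of all-singleton parts of equal size, so that the count genuinely factors over part sizes $i$ rather than over the block types $d$ that governed Proposition~\ref{prop:coeff}. Getting the matching factor $(2t-1)!!\,i^{t}$ and the parity indicator $\tfrac{1\pm(-1)^i}{2}$ exactly right, including the convention $0^0=1$ that records the unique self-paired or all-singleton structure of a single part, is the delicate bookkeeping on which the rest of the argument rests.
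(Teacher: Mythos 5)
Your proposal is correct and follows essentially the same route as the paper: reduce to counting $\calF^{\,\mu}_{\!\calA_k}(\kappa)$ via Corollary~\ref{cor:char1} and~\eqref{eqn:charformulageneral1}, observe that propagating blocks are identity edges so the propagating $\kappa$-blocks are exactly the all-singleton parts $y_{\kappa_i,\kappa_i}$ chosen in $\binom{\m_i(\kappa)}{\m_i(\mu)}$ ways, and then enumerate the non-propagating parts by cross-pairing $2t$ of them ($\binom{\ds_i(\kappa,\mu)}{2t}(2t-1)!!\,i^t$ ways) with the unpaired parts forced into internal pairs or singletons according to the block-size restriction, yielding the parity factors. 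Your additional care in deriving from Lemma~\ref{lemma:iff}(b) that connected sets of non-propagating parts can only be matched pairs of equal-size all-singleton blocks is a point the paper leaves implicit, but the argument is the same.
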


\begin{proof}
We count the number of symmetric $m$-diagrams in $\calF^{\,\mu}_{\!\calA_k}(\kappa)$ for each algebra. For each of the proper subalgebras of $\Pb_k(n)$, the propagating blocks of a symmetric $m$-diagram are identity edges, so the propagating $\kappa$-blocks are of the form $y_{\kappa_i,\kappa_i}$. Consider first the Brauer algebra. There are $\m_i(\kappa)$ blocks which can become propagating blocks of type $i$. If $\m_i(\kappa) < \m_i(\mu)$, there are not enough blocks of type $i$ to propagate and the coefficient is zero. If $\m_i(\kappa) \geq \m_i(\mu)$, then we choose $\m_i(\mu)$ of these to propagate. There are $\ds_i(\kappa,\mu) = \m_i(\kappa)-\m_i(\mu)$ blocks of type $i$ remaining. The non-propagating blocks in Brauer diagrams have size two, and if $i$ is even, we can choose $2t$ of the remaining blocks to pair up in $(2t-1)!!$ ways, where a given pair can be connected in $i^t$ ways. The remaining $\ds_i(\kappa,\mu)-2t$ blocks are not paired up, and are made into blocks of type $i/2$ by pairing up vertices within each block. If $i$ is odd and $\ds_i(\kappa,\mu)$ is even we pair all $\ds_i(\kappa,\mu)$ blocks together, which can happen in $(\ds_i(\kappa,\mu)-1)!!\ i^{\ds_i(\kappa,\mu)/2}$ ways. If both $i$ and $\ds_i(\kappa,\mu)$ are odd, there are zero ways of pairing the non-propagating vertices. Taking the product over all values of $i$ gives the result. 

For the rook-Brauer algebra it is possible to have non-propagating blocks of size one. If $i$ is even, each of the $\ds_i(\kappa,\mu)-2t$ blocks designated as non-propagating can either consist of singletons or pairs, so there are $2^{\ds_i(\kappa,\mu)-2t}$ configurations for the non-propagating blocks. If $i$ is odd, all of the non-propagating blocks must be singletons, so there is only one choice. 

Finally, for the rook monoid algebra, all of the non-propagating blocks are singletons, so for each $i$ we simply choose $\m_i(\mu)$ blocks of type $i$ to propagate. 
\end{proof}

The characters of the planar subalgebras are determined by their values on the identity diagram $\mathbf{1}_r$, for $r \leq k$ (see~\eqref{eqn:charformulageneral1}). It follows that the set of fixed points  equals the set of symmetric $m$-diagrams. This gives the known character formulas obtained in \cite[Sec.~2]{HR2} for the Temperley-Lieb algebra, in \cite[Sec.~4.3]{BH-motzkin} for the Motzkin algebra, and in \cite[Sec.~5]{FHH} for the planar rook monoid algebra. 

\begin{thm} If $\A_k$ is one of the planar subalgebras of $\Pb_k(n)$ and $\lambda \in \Lambda^{\A_k}_n$ with $|\lambda^\ast| = m$, then 
\begin{equation*}
\chi^\lambda_{\A_k}({{\bf 1}_r} \ot \e^{\ot s})  =  \F_{\!\A_k}^{m,r},
\end{equation*}
where $r + 2s = k$ for the Temperley-Lieb algebra and $r + s = k$ for the others. The coefficients  $\F_{\!\A_k}^{m,r}$ for the Temperley-Lieb, Motzkin, and planar rook monoid algebras, respectively, are given by,
\begin{enumerate}[label=\rm{(\alph*)}]
\item $\F_{\TL_k(n)}^{m,r} =  \displaystyle{ \binom{r}{\frac{r-m}{2}} - \binom{r}{\frac{r-m}{2}-1}},$

\item $\F_{\Motz_k(n)}^{m,r} = \displaystyle{\sum_t \binom{r}{m+2t}\left( \binom{m+2t}{t} - \binom{m+2t}{t-1}\right)},$

\item $\F_{\PR_k}^{m,r} =  \displaystyle{\binom{r}{m}}.$

\end{enumerate}

\end{thm}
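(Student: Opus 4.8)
The plan is to reduce the entire statement to a count of symmetric $m$-diagrams. By the reduction formula~\eqref{eqn:charformulageneral1}, the value $\chi^\lambda_{\A_k}(\mathbf{1}_r \ot \e^{\ot s})$ equals $\chi^\lambda_{\A_r}(\mathbf{1}_r)$ when $r \geq m$ and vanishes otherwise; since $\A_k$ is planar the only available conjugacy-class diagram is the all-ones element $\gamma_{[1^r]} = \mathbf{1}_r$, so this single value determines the character. First I would apply Corollary~\ref{cor:char1} (equivalently Theorem~\ref{thm:char1}) to $\A_r$ with $\kappa = [1^r] \vdash r$.

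The key simplification is that planarity forces the twist to be trivial. Because the propagating blocks of a symmetric $m$-diagram cannot cross, conjugation by any planar diagram, and in particular by the identity $\mathbf{1}_r$, permutes the propagating blocks trivially, so $\sigma_{\mathbf{1}_r,w} = \mathrm{id}$ for every $w$. Moreover $\mathbf{1}_r \circ w \circ \mathbf{1}_r^T = w$ for all $w$ and no middle components are removed, whence $\calF^{\,m}_{\!\calA_r}(\mathbf{1}_r) = \calW^m_{\!\calA_r}$ and $\ell(\mathbf{1}_r,w)=0$. Since the planar algebras are indexed by $\lambda^\ast = [m]$, whose symmetric group character $\chi^{[m]}_{\Sb_m}$ is the trivial character and hence identically $1$, the sum~\eqref{eqn:char1} collapses to $\chi^\lambda_{\A_r}(\mathbf{1}_r) = |\calW^m_{\!\calA_r}|$. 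Thus $\F^{m,r}_{\!\A_k} = |\calW^m_{\!\calA_r}|$, and it remains only to enumerate the symmetric $m$-diagrams of each planar algebra on $r$ vertices.

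The remaining step is the enumeration, which is the combinatorial heart of the argument; these counts are exactly the entries of the table of $|\Wm|$ in Section~\ref{subsec:symmetricdiagrams} with $k$ replaced by $r$. For the planar rook monoid the diagram is determined by choosing which $m$ of the $r$ vertices carry propagating identity edges, the rest being isolated singletons, giving $\binom{r}{m}$. For the Temperley--Lieb algebra a symmetric $m$-diagram is a non-crossing partial matching of $r$ points with exactly $m$ unmatched (propagating) defects and $\frac{r-m}{2}$ non-crossing pairs; I would count these by the reflection principle, which yields the ballot number $\binom{r}{\frac{r-m}{2}} - \binom{r}{\frac{r-m}{2}-1}$ and automatically returns $0$ when $r-m$ is odd or negative. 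For the Motzkin algebra I would stratify by the number $t$ of non-crossing pairs: choose the $m+2t$ vertices that are matched or propagating in $\binom{r}{m+2t}$ ways, arrange them as a non-crossing configuration with $t$ pairs and $m$ defects using the same Temperley--Lieb ballot count $\binom{m+2t}{t} - \binom{m+2t}{t-1}$, leave the remaining vertices as non-propagating singletons, and sum over $t$.

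The main obstacle is the Temperley--Lieb enumeration: showing that non-crossing partial matchings with a prescribed number of defects are counted by the Catalan-triangle ballot number requires a careful reflection-principle bijection (or an induction on $r$), and the Motzkin count then rests on it. Everything else is formal once the trivial-twist observation is in place. To finish I would either carry out the reflection argument explicitly or cite the standard enumerations recorded via the \cite{OEIS} triangles in Section~\ref{subsec:symmetricdiagrams}.
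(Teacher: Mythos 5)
Your proposal is correct and follows essentially the same route as the paper: reduce via the formula $\chi^\lambda_{\A_k}(\mathbf{1}_r\otimes\e^{\otimes s})=\chi^\lambda_{\A_r}(\mathbf{1}_r)$, observe that every symmetric $m$-diagram is fixed by the identity with trivial twist and that the relevant $\Sb_m$-character is the trivial one, so the character value is $|\calW^m_{\!\calA_r}|$, and then count. The paper outsources the final enumeration to \cite[Sec.~5.5--5.7]{HReeks} (the same ballot-number counts recorded in the table of Section~\ref{subsec:symmetricdiagrams}), whereas you sketch it directly; your sketch is sound, with the one small point that for $\TL$ and $\Motz$ the defects must also be uncovered by arcs, which is exactly what the reflection-principle count enforces.
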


\begin{proof} The proof is by counting symmetric $m$-diagrams in the planar algebras, which  is done  in \cite[Sec.~5.5--5.7]{HReeks}.
\end{proof}

\subsection{Character tables}

When viewed as a matrix, the character table of $\A_k$, denoted $\Xi_{\A_k}$, can be expressed as the product of a direct sum of character tables $\Xi_{\Sb_m}$ for $0 \leq m \leq k$ and the matrix $\F_{\!\A_k}$, whose $\mu,\kappa$ entry is $\F_{\!\A_k}^{\mu,\kappa}$. It is clear from the definitions above that, in all cases, $\F_{\!\A_k}$ is unitriangular (with respect to lexicographic order on partitions) with entries in $\ZZ_{\geq 0}$ and has determinant equal to one. As a result, the absolute value of the determinant of the character table $\Xi_{\A_k}$ is equal to the absolute value of the product of determinants of symmetric group character tables $\Xi_{\Sb_m}$. In \cite{Ja} and \cite{ScSi} it is shown that the absolute value of the determinant of $\Xi_{\Sb_m}$ is equal to the product of all parts of all partitions of $m$:
$
|\det \Xi_{\Sb_m}| = \prod_{\mu\vdash m} \prod_{i} i^{\m_i(\mu)}.
$
This leads to the following result. 

\begin{prop}
Let $\A_k$ be any of the diagram algebras above, and let $\Xi_{\A_k}$ denote the character table of $\A_k$ viewed as a matrix with integer entries. Then 
\begin{equation*}
|\det \Xi_{\A_k}| = \begin{cases}
\displaystyle \prod_{\lambda}\prod_{i} i^{\m_i(\lambda^\ast)} & \text{if $\A_k$ is non-planar, }  \\
\quad 1 & \text{if $\A_k$ is planar, } 
\end{cases}
\end{equation*}
where the product is over partitions $\lambda \in \Lambda^{\A_k}_n$. 
\end{prop}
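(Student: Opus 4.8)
The plan is to exploit the factorization of the character table $\Xi_{\A_k}$ that the paragraph preceding the proposition already sets up. The key structural fact is that, as a matrix,
\begin{equation*}
\Xi_{\A_k} = \left(\bigoplus_{m \in \Gamma_{\!\A_k}} \Xi_{\Sb_m}\right) \F_{\!\A_k},
\end{equation*}
where the rows and columns are indexed by $\Lambda^{\A_k}_n$ via the bijection $\lambda \leftrightarrow (m,\mu)$ with $m = |\lambda^\ast|$ and $\mu = \lambda^\ast \vdash m$, and $\F_{\!\A_k}$ has entries $\F_{\!\A_k}^{\mu,\kappa}$ (extended to the full index set, with a block for each $m$). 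First I would verify that this is genuinely a matrix identity: Corollary~\ref{cor:char1} gives $\chi^\lambda_{\A_k}(\gamma_\kappa) = \sum_{\mu\vdash m}\F_{\!\A_k}^{\mu,\kappa}\chi^{\lambda^\ast}_{\Sb_m}(\gamma_\mu)$, and together with~\eqref{eqn:charformulageneral1} (which says the character vanishes off the relevant block when $|\kappa| < |\lambda^\ast|$) this shows precisely that multiplying the block-diagonal matrix of symmetric group character tables by $\F_{\!\A_k}$ reproduces $\Xi_{\A_k}$.

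Next I would take determinants of both sides. Since determinant is multiplicative,
\begin{equation*}
|\det \Xi_{\A_k}| = \left|\det\left(\bigoplus_{m}\Xi_{\Sb_m}\right)\right|\cdot |\det \F_{\!\A_k}| = \left(\prod_{m \in \Gamma_{\!\A_k}} |\det \Xi_{\Sb_m}|\right)\cdot |\det \F_{\!\A_k}|.
\end{equation*}
The second factor is $1$: the text observes that $\F_{\!\A_k}$ is unitriangular with respect to lexicographic order on partitions (its diagonal entries $\F_{\!\A_k}^{\mu,\mu}$ equal $1$, as one reads off from the closed formulas, e.g.\ Proposition~\ref{prop:coeff} with $\nu = \mu = \kappa$, and likewise in the subalgebra cases), so $\det \F_{\!\A_k} = 1$. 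I would spell out the unitriangularity claim carefully for each family, since it is the one place where the individual coefficient formulas actually enter; in each case the contribution when $\mu = \kappa$ forces a single term equal to $1$ and all terms with $\kappa$ lexicographically below $\mu$ vanish.

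For the first factor I would invoke the cited result of \cite{Ja} and \cite{ScSi}, namely $|\det \Xi_{\Sb_m}| = \prod_{\mu\vdash m}\prod_i i^{\m_i(\mu)}$. In the non-planar case $\Gamma_{\!\A_k}$ ranges over the appropriate set of ranks and each $\Xi_{\Sb_m}$ appears once, so
\begin{equation*}
|\det \Xi_{\A_k}| = \prod_{m\in\Gamma_{\!\A_k}}\prod_{\mu\vdash m}\prod_i i^{\m_i(\mu)} = \prod_{\lambda}\prod_i i^{\m_i(\lambda^\ast)},
\end{equation*}
the last equality being the reindexing $\lambda \leftrightarrow \lambda^\ast$ over $\Lambda^{\A_k}_n$. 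In the planar case each block $\Xi_{\Sb_m}$ is replaced by the $1\times 1$ table of the trivial subalgebra $\CC\mathbf{1}_m$ (indexed only by $[m]$), whose determinant is $1$, so the whole product collapses to $1$. The main obstacle I anticipate is not the determinant bookkeeping but justifying the unitriangularity of $\F_{\!\A_k}$ uniformly and confirming the precise indexing set $\Gamma_{\!\A_k}$ in each case (particularly the parity restriction for $\B_k(n)$ and $\TL_k(n)$), so that the reindexing in the final step matches $\Lambda^{\A_k}_n$ exactly; everything else is a short multiplicativity argument.
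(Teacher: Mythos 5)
Your proposal is correct and follows the same route the paper takes: factor $\Xi_{\A_k}$ as the direct sum of symmetric group character tables times the unitriangular matrix $\F_{\!\A_k}$, use multiplicativity of the determinant, and invoke the cited formula for $|\det \Xi_{\Sb_m}|$. The only difference is that you spell out the unitriangularity and reindexing checks more explicitly than the paper, which simply asserts them in the preceding paragraph.
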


We conclude the section by providing examples of character tables for the non-planar algebras. 

\begin{examp} \label{ex:chartables}
In the following examples, the rows of $\Xi_{\A_k}$ are indexed by the irreducible $\A_k$-modules, which are labelled by partitions $\lambda \in \Lambda^{\A_k}_n$, and the columns are indexed by conjugacy class analogs, which are labelled by partitions of $0, \ldots, k$. Both are arranged in lexicographic order. For example, the rows of $\Xi_{\Pb_3(n)}$ are indexed by 
$\{ [n], [n-1,1], [n-2,2] [n-2,1,1],[n-3,3],[n-3,2,1],[n-3,1,1,1]\}$ and the columns are indexed by $\{ \emptyset, [1], [2], [1,1],  [3],[2,1],[1,1,1]\}$.

\begin{enumerate}[label=(\alph*)]
\item \emph{The partition algebra, $\Pb_3(n)$.  Note that the entry $\F^{[1],[2,1]}_{\Pb_3(n)}=4$ is computed in Example~\ref{ex:coeff}:}
\begin{equation*}
\underbrace{\begin{pmatrix}
  1 & 1 & 2 & 2 & 2 & 3 & 5 \\
\cdot    & 1 & 1 & 3 & 1 & 4 & 10 \\
\cdot   & \cdot & 1  & 1 & 0 & 2 & 6 \\
\cdot   & \cdot & -1 & 1 & 0 & 0 & 6 \\
\cdot  & \cdot & \cdot & \cdot & 1 & 1 & 1 \\
\cdot   &\cdot  & \cdot & \cdot & -1 & 0 & 2 \\
\cdot  & \cdot &  \cdot&  \cdot& 1 & -1 & 1
\end{pmatrix}}_{\Xi_{\Pb_3(n)}} = 
\underbrace{\begin{pmatrix}
 1 & \cdot &\cdot  & \cdot & \cdot &\cdot  & \cdot \\
\cdot  & 1 & \cdot &\cdot  &\cdot  & \cdot &  \cdot\\
\cdot  &\cdot  & 1  & 1 & \cdot &  \cdot& \cdot \\
\cdot & \cdot & -1 & 1 & \cdot &\cdot  &  \cdot\\
\cdot & \cdot & \cdot & \cdot & 1 & 1 & 1 \\
\cdot & \cdot & \cdot & \cdot & -1 & 0 & 2 \\
\cdot  & \cdot& \cdot & \cdot & 1 & -1 & 1
\end{pmatrix}}_{\Xi_{\Sb_0}\oplus \Xi_{\Sb_1}\oplus \Xi_{\Sb_2}\oplus \Xi_{\Sb_3} }
\underbrace{\begin{pmatrix}
1 & 1 & 2 & 2 & 2 & 3 & 5 \\
 \cdot   & 1 & 1 & 3 & 1 & \mathbf{4} & 10 \\
 \cdot &\cdot    & 1 & 0 & 0 & 1 & 0 \\
 \cdot &\cdot  &\cdot  & 1 & 0 & 1 & 6 \\
 \cdot &\cdot  &\cdot  &\cdot  & 1 & 0 & 0 \\
 \cdot &\cdot  &\cdot  &\cdot  &\cdot  & 1 & 0 \\
 \cdot &\cdot  &\cdot  & \cdot &\cdot  &\cdot  & 1
\end{pmatrix}}_{\F_{\Pb_3(n)}}
\end{equation*}

\item \emph{The rook-Brauer algebra, $\RB_3(n)$:}
\begin{equation*}
\underbrace{\begin{pmatrix}
  1 & 1 & 2 & 2 & 1 & 2 & 4 \\
   \cdot   & 1 & 0 & 2 & 0 & 2 & 6 \\
  \cdot   &  \cdot  & 1  & 1 & 0 & 1 & 3 \\
   \cdot  & \cdot   & -1 & 1 & 0 & -1 & 3 \\
  \cdot  &  \cdot &  \cdot  &  \cdot  & 1 & 1 & 1 \\
  \cdot   &  \cdot  &  \cdot  &  \cdot  & -1 & 0 & 2 \\
  \cdot  &  \cdot  &  \cdot  &  \cdot  & 1 & -1 & 1
\end{pmatrix}}_{\Xi_{\RB_3(n)}} = 
\underbrace{\begin{pmatrix}
 1 & \cdot &\cdot  & \cdot & \cdot &\cdot  & \cdot \\
\cdot  & 1 & \cdot &\cdot  &\cdot  & \cdot &  \cdot\\
\cdot  &\cdot  & 1  & 1 & \cdot &  \cdot& \cdot \\
\cdot & \cdot & -1 & 1 & \cdot &\cdot  &  \cdot\\
\cdot & \cdot & \cdot & \cdot & 1 & 1 & 1 \\
\cdot & \cdot & \cdot & \cdot & -1 & 0 & 2 \\
\cdot  & \cdot& \cdot & \cdot & 1 & -1 & 1
\end{pmatrix}}_{\Xi_{\Sb_0}\oplus \Xi_{\Sb_1}\oplus \Xi_{\Sb_2}\oplus \Xi_{\Sb_3} }
\underbrace{\begin{pmatrix}
1 & 1 & 2 & 2 & 1 & 2 & 4 \\
   \cdot   & 1 & 0 & 2 & 0 & 2 & 6 \\
 \cdot   &   \cdot   & 1 & 0 & 0 & 1 & 0 \\
 \cdot   &  \cdot  &  \cdot  & 1 & 0 & 0 & 3 \\
  \cdot  &   \cdot &  \cdot  &  \cdot  & 1 & 0 & 0 \\
  \cdot  &  \cdot  &  \cdot  &  \cdot  & \cdot   & 1 & 0 \\
  \cdot  &  \cdot  & \cdot   &  \cdot  &  \cdot  &  \cdot  & 1
\end{pmatrix}}_{\F_{\RB_3(n)}}
\end{equation*}

\item \emph{The rook monoid algebra, $\R_3$:}
\begin{equation*}
\underbrace{\begin{pmatrix}
 1 & 1 & 1 & 1 & 1 & 1 & 1 \\
  \cdot  & 1 & 0 & 2 & 0 & 1 & 3 \\
 \cdot   &  \cdot  & 1 & 1 & 0 & 1 & 3 \\
  \cdot  &  \cdot  & -1 & 1 & 0 & -1 & 3 \\
 \cdot  & \cdot   &  \cdot  &  \cdot  & 1 & 1 & 1 \\
  \cdot  & \cdot   &  \cdot  & \cdot  & -1 & 0 & 2 \\
  \cdot  & \cdot   & \cdot   &  \cdot  & 1 & -1 & 1
\end{pmatrix}}_{\Xi_{\R_3(n)}} = 
\underbrace{\begin{pmatrix}
 1 & \cdot &\cdot  & \cdot & \cdot &\cdot  & \cdot \\
\cdot  & 1 & \cdot &\cdot  &\cdot  & \cdot &  \cdot\\
\cdot  &\cdot  & 1  & 1 & \cdot &  \cdot& \cdot \\
\cdot & \cdot & -1 & 1 & \cdot &\cdot  &  \cdot\\
\cdot & \cdot & \cdot & \cdot & 1 & 1 & 1 \\
\cdot & \cdot & \cdot & \cdot & -1 & 0 & 2 \\
\cdot  & \cdot& \cdot & \cdot & 1 & -1 & 1
\end{pmatrix}}_{\Xi_{\Sb_0}\oplus \Xi_{\Sb_1}\oplus \Xi_{\Sb_2}\oplus \Xi_{\Sb_3} }
\underbrace{\begin{pmatrix}
 1 & 1 & 1 & 1 & 1 & 1 & 1 \\
 \cdot   & 1 & 0 & 2 & 0 & 1 & 3 \\
 \cdot   &  \cdot  & 1 & 0 & 0 & 1 & 0 \\
 \cdot   &  \cdot  & \cdot   & 1 & 0 & 0 & 3 \\
 \cdot   &  \cdot  & \cdot  &  \cdot  & 1 & 0 & 0 \\
 \cdot  & \cdot   &  \cdot  & \cdot   &  \cdot  & 1 & 0 \\
 \cdot   &  \cdot & \cdot   & \cdot   &  \cdot  &  \cdot  & 1
\end{pmatrix}}_{\F_{\R_3}}
\end{equation*}

\item \emph{The Brauer algebra, $\B_4(n)$:}
\setlength{\arraycolsep}{3pt}
\begin{equation*}
\underbrace{\begin{pmatrix}
1 & 1 & 1 & 1 & 0 & 3 & 1 & 3 \\
  \cdot   & 1 & 1 & 0 & 0 & 2 & 2 & 6 \\
 \cdot   & -1   & 1 & 0 & 0 & -2 & 0 & 6 \\
 \cdot  & \cdot  & \cdot  & 1 & 1 & 1 & 1 & 1 \\
 \cdot  & \cdot  & \cdot  & -1 & 0 & -1 & 1 & 3 \\
 \cdot  & \cdot  & \cdot  & 0 & -1 & 2 & 0 & 2 \\
 \cdot  & \cdot  & \cdot  & 0 & 0 & -1 & -1 & 3 \\
 \cdot  & \cdot  & \cdot  & -1 & 1 & 1 & -1 & 1
\end{pmatrix}}_{\Xi_{\B_4(n)}} = 
\underbrace{\begin{pmatrix}
 1 & \cdot   & \cdot   & \cdot   & \cdot   & \cdot   & \cdot   \\
  \cdot  & 1 & 1 & \cdot   & \cdot   & \cdot   & \cdot   \\
  \cdot  &  -1 & 1  & \cdot   & \cdot   & \cdot   & \cdot   \\
 \cdot  & \cdot   & \cdot  & 1 & 1 & 1 & 1 & 1 \\
 \cdot  & \cdot  & \cdot  & -1 & 0 & -1 & 1 & 3 \\
  \cdot & \cdot  & \cdot  & 0 & -1 & 2 & 0 & 2 \\
 \cdot  & \cdot  & \cdot  & 0 & 0 & -1 & -1 & 3 \\
  \cdot & \cdot  & \cdot  & -1 & 1 & 1 & -1 & 1
\end{pmatrix}}_{\Xi_{\Sb_0}\oplus \Xi_{\Sb_2}\oplus \Xi_{\Sb_4} }
\underbrace{\begin{pmatrix}
1 & 1 & 1 & 1 & 0 & 3 & 1 & 3 \\
  \cdot    & 1 & 0 & 0 & 0 & 2 & 1& 0 \\
  \cdot  &  \cdot    & 1 & 0 & 0 & 0 & 1 & 6 \\
 \cdot   &  \cdot  & \cdot   & 1 & 0 & 0 & 0 & 0  \\
  \cdot  &  \cdot  &  \cdot  &  \cdot  & 1 & 0 & 0 & 0\\
 \cdot   &  \cdot  & \cdot   & \cdot   &  \cdot  & 1 & 0 & 0 \\
  \cdot  &  \cdot  & \cdot  &  \cdot  &   \cdot &  \cdot  & 1 & 0 \\
 \cdot  &  \cdot   &  \cdot  & \cdot   & \cdot   &   \cdot & \cdot   & 1
\end{pmatrix}}_{\F_{\B_4(n)}}
\end{equation*}

\end{enumerate}

\end{examp}

\bibliographystyle{math}

\bibliography{PartAlgReps}

\end{document}